\newtheorem{theorem}{Theorem}[section]
\newtheorem{proposition}[theorem]{Proposition}
 \newtheorem{corollary}[theorem]{Corollary}
\theoremstyle{remark}
\newtheorem{remark}{Remark}
\newtheorem{definition}[theorem]{Definition}
\title{A numerical study of a PDE-ODE system with \\ a stochastic dynamical boundary condition:\\ a nonlinear model for sulphation phenomena. }
\author{Francesca  Arceci, Daniela Morale, Stefania Ugolini } 
 \affil{ Department of Mathematics, University of Milano, Italy}
\begin{document}
\maketitle

\begin{abstract}
We investigate   the qualitative behaviour of the solutions of a stochastic boundary value problem on the half-line for a nonlinear system of parabolic reaction-diffusion equations, from a numerical point of view. The model describes the chemical aggression of calcium carbonate stones under the attack of sulphur dioxide. The dynamical boundary condition is given by a Pearson diffusion, which is original in the context of the degradation of cultural heritage. We first discuss a scheme based on the Lamperti transformation for the stochastic differential equation to preserve the boundary and a splitting strategy for the partial differential equation based on recent theoretical results. Positiveness, boundedness, and stability are stated. The impact of boundary noise on the solution and its qualitative behaviour both in the slow and fast regimes is discussed in several numerical experiments.    
\end{abstract}

\section{Introduction}

This paper concerns the qualitative and numerical study of a nonlinear reaction diffusion partial differential equation (PDE) system on the half-line coupled with a stochastic dynamical boundary condition. The deterministic counterpart arises in the description of the evolution of the chemical reaction of sulphur dioxide with the surface of calcium carbonate stones in the phenomenon of cultural heritage degradation, \cite{2007_AFNT_TPM,2004_ADN,2008_Bonazza_Sabbioni,2019_BCFGN_CPAA,2024_Comite_morale_ugolini}. Recently, a bit of work has been done for the description of possible sources of randomness in the sulphation model. In particular, at the microscale  an interacting particle model has been proposed,\cite{2024_MACH2023_particles}  and a probabilistic interpretation  of a deterministic model introduced in   \cite{2004_ADN} has been derived,\cite{2024_DMLTSU_Arxiv}.   Here, we discuss a model at the macroscale, first    introduced in   \cite{2024_MACH2023_PDE,2023_Arceci_Giordano_Maurelli_Morale_Ugolini,2023_MMU}. The authors propose a system of partial differential equations coupled with a source of randomness, given by a stochastic dynamical boundary condition. In particular, at the left boundary  a Pearson process is considered, that is a solution to a Brownian motion-driven stochastic differential equation (SDE) with a mean reverting drift and a squared diffusion coefficient, which is a second-order polynomial of the state.

 \medskip

The problem of the degradation of sandstones, limestones, and marble stones with different porosity used as building materials for thousands of years is a very important issue that has been observed in the last century. The first cause can be attributed to atmospheric pollutants, in particular the reaction of sulphur dioxide with calcareous surfaces, which then forms gypsum and black crusts, \cite{2008_Bonazza_Sabbioni,2020_Comite_fermo1,2024_Comite_morale_ugolini}. In the last couple of decades, some mathematical models have been introduced for studying the evolution of the above degradation phenomena, \cite{2018_saba}. 

\smallskip
 
The dynamical evolution of the chemical reaction of sulphur dioxide with the surface of calcium carbonate stones in the cultural heritage degradation process is a complex system, due to the involvement of physical, chemical and biological processes, coupled with specific characteristics of the materials such as capillarity and porosity, \cite{2007_AFNT_TPM,2004_ADN,2019_BCFGN_CPAA}.  More precisely, in    \cite{2004_ADN} it has been introduced a first deterministic model for a quantitative description of the penetration of sulphur dioxide \ch{SO2} into porous materials, as calcium carbonate stones \ch{CaCO3}. It assumes that polluted air and, in particular sulphur dioxide,  diffuses through the pores of the stones and interacts with their surface,  causing  the following reaction \[
 \ch{CaCO3} + \ch{H2SO4}+\ch{H2O}\to\ch{CaSO4}\cdot2\ch{H2O}+\ch{CO2}.
 \]
Given the sulphur dioxide concentration $\rho$ and  the calcite density $c$, the model reads as following, \cite{2007_AFNT_TPM,2004_ADN}
\begin{equation} \label{eq:modello_Natalini} 
	\begin{split}
		\frac{\partial }{\partial t}  \rho   &=	\nabla  \cdot(\varphi  \nabla s )  -\lambda \rho c ,   \qquad   \mbox{ in } (0,\infty)\times (0,T),\\
		\partial_t c &= -\lambda \varphi s c,  
	\end{split}
\end{equation}
with constant initial conditions
\begin{equation} \label{eq:modello_Natalini_condition1} 
	\begin{split}
  \rho(x,0)&= \rho_0, \quad
c(x,0) = c_0;
	\end{split}
\end{equation}
and  constant boundary condition $
\rho(0,t)=\bar{\rho}_0.$
Function  $s$ stands for the porous concentration of sulphur dioxide and is such that $\rho = \varphi s$, whereas $\lambda\in \mathbb R_+$ is the reaction rate.  The function $\varphi=\varphi(c)$ is the porosity of the material, which is usually a function of calcium carbonate. The interested reader may refer to    \cite{2004_ADN}  and to the references therein,  for a detailed derivation of the PDE system, and a qualitative behaviour of the solution on the half-line.   
 
 \smallskip
 
The model \eqref{eq:modello_Natalini} has been the starting point of a widespread literature, both analytical and numerical,  \cite{2007_AFNT_TPM,2007_AFNT2_TPM,2019_BCFGN_CPAA,2023_BCFGN_NLAA,2005_GN_NLA,2007_GN_CPAA}. To our knowledge, the study of degradation phenomena has been utilised primarily for  pure statistical models or deterministic partial differential equations, \cite{2018_saba}. Usually, the system has been studied on the half-line $\mathbb R_+$ and the boundary conditions are either constant or a given pre-selected bounded measurable positive function, \cite{2004_ADN,2005_GN_NLA}.

 \smallskip
 
Our contribution within this research area has been the introduction of a random dynamical boundary condition  $\rho(t,0)=  \Psi_t,$ where $\{\Psi_t\}_{t\in [0,T ]}$ is the solution of the following stochastic differential equation (SDE) of the It\^o type 
\begin{equation}\label{stochastic_boundary_condition_intro}
 d\Psi_t= \alpha(\gamma-\Psi_t) dt +\sigma \sqrt{\Psi_t\left(\eta-\Psi_t\right)}dW_t,    
\end{equation}
with $\alpha,\sigma,\gamma,\eta\in \mathbb R_+$, and $\gamma\le \eta$, \cite{2023_Arceci_Giordano_Maurelli_Morale_Ugolini,2024_MACH2023_PDE,2023_MMU}.  The choice of such an equation is due to a preliminary study of the time series of \ch{SO2} concentration in the Milano area, \cite{2023_Arceci_Giordano_Maurelli_Morale_Ugolini}, carried out with the same statistical techniques as in   \cite{2021_Giordano-Morale}. This study shows clear evidence of random fluctuations, boundedness, and a mean reversal effect. The  way we have chosen to introduce a bounded noise, even though  driven by an unbounded  Wiener process $W$ is to consider a case of  Pearson process,  solution of the  equation \eqref{stochastic_boundary_condition_intro}. The issue of considering a  bounded noise, is very important above all in applications, \cite{2013_donofrio,2017_donofrio_flandoli}. Pearson diffusions are often considered in financial applications; for instance,   the very special case of Cox–Ingersoll–Ross process is well known; furthermore, they are often involved in the evolution of interest rates, \cite{2002_delbaen_interest_rate} or as a model for correlation, \cite{2016_teng}; recently,  an example also in the context of the first passage time is provided, \cite{2023_sacerdote}. We also mention that Pearson processes have been popular in applications such as population genetics,
under the name of Wright-Fisher diffusion, \cite{2010_Griffiths,2018_Griffiths,2013_Pal}.
 
 \smallskip
Hence, the novelty we propose here is the description of the evolution of air pollutants, such as sulphur dioxide via a version of a Pearson process as the dynamical boundary condition of a partial differential system which is nonlinear and not strongly parabolic, describing the evolution inside the material. Evolution problems with stochastic dynamical boundary conditions have been studied with particular attention to the case in which at the boundary the solution acts as a white noise, \cite{2015_barbu_bonaccorsi_Tubaro_JMAA,2006_Bon_Zig,2015_Brzezniak_russo,2004_chueshov,1993_daPrato_Zabczyk,2007_debussche_Fuhrman_Tessitore}. To the best of our knowledge, there are a couple of studies closer related to our setting in   \cite{2006_Bon_Zig} and \cite{2004_chueshov}, where the solution of the PDE satisfies a SDE at the boundary. We stress that the analysed experimental data for \ch{SO2} are not compatible with a white noise behaviour. Furthermore, since we do not use Neumann type boundary conditions which could admit a white noise type condition, a process driven by a Brownian motion seems more appropriate in the case of a Dirichlet boundary condition. From the mathematical point of view, the regularity of the noise considered here is better than the regularity of the white noise, even though it is much more irregular than the boundary conditions assumed in the existing literature,\cite{2019_BCFGN_CPAA,2005_GN_NLA,2007_GN_CPDE,2023_BCFGN_NLAA}. Finally, even though the noise is not Lipschitz, it is naturally bounded, which is important from a modelling point of view. 
 \smallskip
 
In the present paper we perform a complete qualitative and numerical analysis of the solution behaviour of the system \eqref{eq:modello_Natalini} coupled with the stochastic boundary condition, given by the stochastic process $\Psi$ in \eqref{stochastic_boundary_condition_intro}. We aim to illustrate the role of randomness in the dynamical evolution of the cited chemical reaction.  
From an analytical point of view, we allowed the boundary condition to have lower regularity, differently from the case in \cite{2007_GN_CPDE}; indeed, it inherits the same regularity of the Wiener process $W$. Thus, the  process $\Psi$ has almost surely trajectories in $C^\beta$, for every $\beta\in(0,1/2)$, i.e. the class of  H\"older continuous functions of order less than one half \cite{2023_MMU}. Furthermore, the SDE has not Lipschitz diffusion coefficient.  The well-posedness, the global existence and the pathwise uniqueness of a mild solution of system  \eqref{eq:modello_Natalini}, endowed with the stochastic boundary condition \eqref{stochastic_boundary_condition_intro}, have already been established in   \cite{2023_MMU}.

\smallskip

For the convenience of the reader,  we propose a self-contained presentation of the relevant mathematical properties associated with the peculiar stochastic boundary condition, in particular its boundedness. Furthermore, we face two problems arising in the application of the Eulero-Maruyama scheme for its numerical simulation: the not Lipschitz diffusion coefficient and the fact that the scheme is not domain preserving; to overcome such problems, we consider a particular spatial change transform of the process which allows to get rid of the not Lipschitz diffusion coefficient. The resulting additive noise It\^o  equation shows a constant diffusion coefficient but a not Lipschitz drift, which is also periodic and monotone decreasing. We recall that super growing coefficients in the Eulero-Maruyama scheme produce divergent solutions, \cite{2021_chen_Gan_wang}. Furthermore, in the case of the one-sided Lipschitz continuous drift, with the derivative
growing at most polynomially and global Lipschitz diffusion coefficient, one might consider a \emph{tamed}  Euler method, where the drift coefficient is modified to obtain a uniformly bounded coefficient \cite{2012_kloeden}. Recently,  
 a smooth truncation procedure has been proposed to overcome both the difficulty caused by the not Lipschitz drift coefficient, which has less regular properties, \cite{2023_chen_LSST}, and to guarantee that monotonicity is maintained. We discuss the qualitative behaviour of the solution of the SDE and we provide a related error analysis. 

\smallskip

Finally, we describe the original part of the paper. The numerical solution of system \eqref{eq:modello_Natalini} with the stochastic dynamical boundary condition given by the process
\begin{equation}\label{eq:modello_Natalini_stochastic_boundary}
\rho(0,t)=\Psi_t
\end{equation}
is investigated, providing a fully discrete scheme based on the splitting strategy proposed in   \cite{2023_MMU}.  Indeed, in order to study the well-posedness of the process $(s,c)$, where $s=\rho/\varphi(c)$ is the solution of the equation
 \begin{align*} 
\partial_t s = \partial_x^2 s +\frac{\varphi_2}{\varphi(c)} \partial_x c\partial_x s -\lambda c \left( \varphi_2 s+1\right) s
\end{align*} 
with initial condition $s(x,0)=\rho_0/\varphi(c_0)$ and stochastic boundary condition $s(t,0)=\widetilde{\Psi}=\Psi_t/\varphi(c(t,0)),$ the authors of the cited reference choose to decompose the variable $s$ as the sum of two random functions
$u$ and $v$.
The random function $u$ is taken as the solution of a diffusion heat equation, which inherits the stochastic dynamical boundary condition $\widetilde{\Psi}$ and with zero initial condition. Consequently, the random function $v$ remains the solution of a nonlinear and nonlocal partial differential equation endowed with zero boundary condition and  initial condition $s(x,0)$. The advantage of the strategy is that one imposes the irregular boundary condition to the heat equation, which is mathematically
very well understood. The equation for $v$ is highly nonlinear, coupled with $u$, but it has  deterministic initial   and   boundary condition. Due to the effect of  diffusion, the highly oscillating boundary condition becomes smoother in the indoor environment and, as a final result, the smoothing power of the Laplacian becomes helpful for the whole system, also including the reaction part, \cite{2024_MACH2023_PDE,2023_MMU}.

\medskip

The discretization scheme takes advantage of the same splitting strategy. First, a discrete heat equation via the forward time centred space (FTCS)  approximation of the solution for $u$ is considered. It is coupled with a zero initial condition and  a discrete stochastic path as boundary condition. The latter is obtained via a Lamperti transformation, to overcome the lack of monotonicity and the problem that the Euler-Maruyama approach does not preserve the bounded domain, \cite{2023_chen_LSST,1992_Kloeden_Platen,2012_kloeden}. Given the discrete random diffusion, an FTCS approximation is considered for the nonlinear bivariate process $(v,c)$.  A preliminary analysis is performed to preserve the positivity and boundedness of the solution. An upper bound for the initial data for $c$ and for the temporal mesh step is provided in dependence on both the spatial space step and the parameters governing the dynamics. Then, a stability analysis is proposed. In the framework of a semi-discretization approach to the same random system, in   \cite{2024_Arceci_DeVecchi_Morale_Ugolini} the theoretical convergence of the discrete scheme here introduced toward the solution to the original continuous one is investigated.\\
A qualitative analysis of the numerical sampling is presented by considering different  pathwise examples in dependence on the characteristic parameters of the model. Furthermore, a statistical estimation of the time-space distribution and its first two moments is provided. The last two analyses are provided both in the cases of slow and fast regimes, by considering increasing values of the parameter $\lambda $  related to the activation energy of the reaction. The simulations permit to appreciate the formation of the moving front.
 
  \smallskip

The paper is organizsed as follows. In Section \ref{sec:stochastic_boundary} we present a complete study of the stochastic dynamical boundary condition, both from the analytical and the numerical point of view. In Section \ref{sec:sulfation_stochastic_boundary} we formulate the problem, recalling the main theoretical properties, and we introduce the splitting strategy, used for the determination of the discrete schemes, in Section \ref{sec:discrete_approximantion}. The main properties of the discrete solution, as well as the stability results, are also discussed here.  Sections \ref{se:numerical_sampling} and \ref{se:numerical_sampling_estimation} are devoted to  sample analysis and  illustration of the role of randomness in the behaviour of the numerical solution.  
 
 \smallskip

\section{A stochastic dynamical boundary condition}\label{sec:stochastic_boundary}

 All processes mentioned throughout the paper are adapted to a filtered probability space $\overline{\Omega}_P:=(\Omega_P, \mathcal{F},\{\mathcal{F}_t\}_t, P )$.
We introduce the stochastic process $\Psi=\left(\Psi_{t}\right)_{t\in[0,T]}, T\in \mathbb R_+$,  which plays the role of the boundary condition for the PDE \eqref{eq:modello_Natalini}.
$\Psi$ is the solution of the following autonomous SDE for $t\in[0,T]$
\begin{equation}\label{eq:SDE_for_SO2}
	d\Psi_{t}   =\alpha(\gamma - \Psi_{t}) dt + \sigma\sqrt{\Psi_{t}(\eta-\Psi_{t})} \, dW_t,
\end{equation} 
with $\alpha,\gamma, \sigma, \eta \in \mathbb R_+$ and $\gamma <\eta$.   $W=\left(W_t\right)_{t\in[0,T]}$ is an adapted  Wiener process, so that the process $\Psi=\{\Psi_t\}_{t\in \mathbb R}$ is a diffusion; in particular, it is a mean reverting process, since the drift has the form
$a(x)=\alpha\left(\gamma -   x\right),$ for $x\in \mathbb{R}$. The noise is characterized by a bounded squared diffusion coefficient which is a second order polynomial of the state, i.e.
\begin{equation}\label{eq:diffusion_Pearson}
	b(x)= \sigma\sqrt{x(\eta-x)}.
\end{equation}  We remark that  $\Psi$ belongs to the class of Pearson processes, known in mathematical biology with  $\eta=1$ as Wright-Fisher diffusion,\cite{2013_Pal}. In our case, we let $\eta \in \mathbb R_+.$ The mean reverting drift is such that on average the solution is pulled to a mean level $\gamma$  at a rate   $\alpha$. Process
fluctuations around the mean level are bounded due to physical constraint; all the parameters in the stochastic model are strictly positive quantities.
\subsection{Existence, uniqueness and boundedness}
\begin{proposition} 
Let $(\Omega_P, \mathcal{F},\{\mathcal{F}_t\}_t, P )$ be a filtered probability space and let $W=\{W_t\}_t$ be a $\mathcal{F}_t$ - Wiener process. Let us assume that  $\alpha,\gamma, \sigma, \eta \in \mathbb R_+$ and $\gamma <\eta$.   Then, for any $x\in [0,\eta]$,  there exists a unique pathwise solution and hence a unique strong solution to the equation \eqref{eq:SDE_for_SO2}.  
\end{proposition}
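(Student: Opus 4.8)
The plan is to reduce the statement to classical existence and uniqueness results via the Yamada--Watanabe programme, after first \emph{extending} the coefficients to the whole line so that the only degeneracy lies exactly where it can be controlled. Concretely, I would keep the drift $a(x)=\alpha(\gamma-x)$, which is already globally Lipschitz with linear growth, and replace $b$ by $\bar b(x)=\sigma\sqrt{\max(x(\eta-x),0)}$, so that $\bar b$ agrees with \eqref{eq:diffusion_Pearson} on $[0,\eta]$ and vanishes identically outside $[0,\eta]$. A short estimate, using $|\sqrt p-\sqrt q|\le\sqrt{|p-q|}$ together with the Lipschitz continuity of $x\mapsto x(\eta-x)$ on $[0,\eta]$ (with constant $\eta$), shows that $\bar b$ is globally $1/2$--Hölder continuous, $|\bar b(x)-\bar b(y)|\le\sigma\sqrt\eta\,|x-y|^{1/2}$, and bounded by $\sigma\eta/2$; checking the mixed cases (one point inside $[0,\eta]$, one outside) is elementary since there $\eta-x\le|x-y|$ or $x\le|x-y|$.

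With these properties I would invoke two classical facts for the extended equation on $\mathbb{R}$: \emph{(i)} pathwise uniqueness, by the Yamada--Watanabe criterion, since the modulus $\rho(u)=\sigma\sqrt\eta\,\sqrt u$ satisfies $\int_{0+}\rho(u)^{-2}\,du=+\infty$ and the drift is Lipschitz; and \emph{(ii)} existence of a weak solution, by Skorokhod's theorem, since the coefficients are continuous with at most linear growth (indeed $\bar b$ is bounded). The Yamada--Watanabe theorem then upgrades \emph{(i)}+\emph{(ii)} to the existence of a unique strong solution $\bar\Psi$ of the extended equation for every deterministic initial datum; in particular there is no explosion. (Alternatively one could simply cite the literature on Pearson/Jacobi, i.e. Wright--Fisher, diffusions, but the above route keeps things self-contained, which is the paper's stated aim.)

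It remains to establish the \emph{confinement}: if $\bar\Psi_0=x\in[0,\eta]$, then $\bar\Psi_t\in[0,\eta]$ for all $t$, almost surely. This is the point where the signs of the drift at the endpoints enter, and I expect it to be the only genuinely delicate step. I would apply the Itô (Meyer--Itô) formula to the convex $C^1$ functions $g(y)=(\max(-y,0))^2$ and $h(y)=(\max(y-\eta,0))^2$. Since $g$ and $h$ are $C^1$, no local-time atom appears, and since $\bar b$ vanishes on $\{\bar\Psi_s\le 0\}$ and on $\{\bar\Psi_s\ge\eta\}$, both the stochastic integral and the second-order terms drop out; one is left with $g(\bar\Psi_t)=2\alpha\int_0^t\bar\Psi_s(\gamma-\bar\Psi_s)\,\mathbf 1_{\{\bar\Psi_s<0\}}\,ds$ and the analogous identity for $h$. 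On $\{\bar\Psi_s<0\}$ one has $\bar\Psi_s<0<\gamma-\bar\Psi_s$, and on $\{\bar\Psi_s>\eta\}$ one has $\bar\Psi_s-\eta>0>\gamma-\bar\Psi_s$ (using $\gamma<\eta$), so both right-hand sides are $\le 0$; as $g,h\ge 0$ this forces $g(\bar\Psi_t)=h(\bar\Psi_t)=0$, i.e. $0\le\bar\Psi_t\le\eta$ for all $t$ by path continuity. Finally, because $\bar a,\bar b$ coincide with $a,b$ on $[0,\eta]$, the confined process $\bar\Psi$ solves \eqref{eq:SDE_for_SO2}, and pathwise uniqueness for the extended equation restricts to pathwise (hence strong) uniqueness for the original one, yielding both claimed conclusions.
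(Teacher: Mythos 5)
Your proposal is correct and its core machinery is the same as the paper's: weak existence from Skorokhod's theorem for continuous coefficients of at most linear growth, pathwise uniqueness from the Yamada--Watanabe criterion (Lipschitz drift plus the $1/2$-H\"older estimate $|b(x)-b(y)|\le \sigma\sqrt{\eta}\,|x-y|^{1/2}$, which is exactly the Ikeda--Watanabe route the paper cites), and the Yamada--Watanabe upgrade to a unique strong solution. Where you genuinely go beyond the paper is in treating the domain issue head-on: the paper works with $b$ only on $[0,\eta]$ and simply asserts that a bounded solution lives there, deferring the boundary behaviour to the later entrance-boundary proposition, which requires the extra parameter condition $\nu>1$. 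You instead extend $b$ to $\bar b(x)=\sigma\sqrt{\max(x(\eta-x),0)}$ on all of $\mathbb{R}$, verify the global H\"older bound including the mixed cases, and then prove $[0,\eta]$-invariance by applying the It\^o (Meyer--It\^o) formula to $(\max(-y,0))^2$ and $(\max(y-\eta,0))^2$; since $\bar b$ vanishes off $[0,\eta]$ and the drift points inward at both endpoints (using $\gamma<\eta$), both quantities are nonnegative yet dominated by nonpositive integrals, hence identically zero. This confinement argument is valid, requires no condition beyond $\alpha,\gamma,\sigma,\eta>0$ and $\gamma<\eta$, and closes a step the paper leaves implicit; note only that it yields invariance of the closed interval $[0,\eta]$, not the stronger statement that the endpoints are never attained from the interior, which is what the paper's subsequent entrance-boundary analysis under condition \eqref{eq:condition_SDE_parameters} provides.
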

\begin{proof}
A classical result by Skorokhod \cite{1995_skorokhod} shows the existence of solutions under the condition that the coefficients are only continuous.  Clearly, continuity holds for the drift coefficient $a(x)$, for $x\in \mathbb{R}$, while for the diffusion coefficient $b(x)$, continuity holds for $x\in [0,\eta]$. Furthermore, with a little algebra, it is easy to prove that both  coefficients have a sub-quadratic growth, i.e. there exists a constant $k=k(\alpha,\gamma,\sigma,\eta)$ such that 
\begin{equation*}
    |a(x)|^2 + 
|b(x)|^2  \leq k(1+|x|^2).
\end{equation*}
Therefore, there exists a bounded solution for \eqref{eq:SDE_for_SO2} on $[0,\eta] $ with probability one. Although the drift coefficient is Lipschitz continuous, the diffusion coefficient does not have this desired property; hence, to solve the problem of uniqueness, we apply Theorems IV-1.1, IV-3.2 and its corollary in   \cite{1981_Ikeda_Watanabe}: for the existence of a unique strong solution, it is sufficient to prove the Lipschitz continuity of the drift and the H\"older continuity of the diffusion coefficient, respectively. Indeed,  
\begin{equation*}
    |b(x) - b(y)|   \leq \sigma \sqrt{|\eta x - x^2 - \eta y + y^2|} \leq \sigma \sqrt{|x-y|}\sqrt{|\eta-x-y|} \leq \beta\cdot |x-y|^{1/2},
\end{equation*}
with $\beta=\sigma\eta$. Hence, the diffusion coefficient $b(x)$ is H\"older continuous with index $1/2$. As a consequence, the well-posedness of the SDE is established.
\end{proof}
 
\smallskip

Hypothesis $x\in [0,\eta]$ is not restrictive, since by employing specific constraints on the parameters of equation \eqref{eq:SDE_for_SO2}, one can prove that $x=0$ and $x=\eta$ are entrance boundaries and the solution $\Psi_t \in(0,\eta)$ for $t\in(0,T]$, whenever $\Psi_0 \in[0,\eta]$. 
Let us recall that the diffusion $\Psi=\{\Psi_t\}_{t\in[0,T]}$ is characterized by the following infinitesimal generator on $f\in C_b^2((0,\eta))$
$$
\mathcal{A}  f (x) = a(x)f^\prime(x)+\frac12 b^2(x) f^{\prime\prime}(x).
$$
and by two basic characteristics: the \emph{speed measure} $ m(dx)$ and the \emph{scale function}
$s(x)$.
\begin{definition}
Given $b^2(x)>0$, for $0<x<\eta$, we define the \emph{scale function} $S$ and the \emph{speed measure} $ M(dx)$  with \emph{speed density} $m$, in  $0<x<\eta$ and with $x_0\in (0,\eta)$, by
	\begin{equation*}%\label{eq:speed_scale}
		\begin{split}
		S(x)&= \int^x_{x_0} \exp\left( -\int^y_{x_0} \frac{2a(\xi)}{b^2(\xi)}d\xi \right)dy; \quad  S([\xi_1,\xi_2])= S(\xi_2)-S(\xi_1) \\
			m(x) &= \frac{1}{b^2(x)s(x)}, \hspace{3.5cm}	M(dx)= m(x)dx,
		\end{split}
	\end{equation*}
where $$s(x)=S^\prime(x)=\exp\left( -\int^x_{x_0} \frac{2a(y)}{b^2(y)}dy \right).$$
 $S$ is called the \emph{scale measure} and $m$ the \emph{speed density}.
\end{definition}
Note\cite{1999_Revuz_yor} that the scale function is such that
$
\mathcal{A}s(x)=0,
$
so that $Y_t=S(X_t)$ is a local martingale since:
\begin{equation}
	\label{eq:S(Psi)}
	d Y_t=d S(X_t)= s(X_t)b(X_t)dW_t=\tilde{b}(Y_t) dW_t,
\end{equation}
where $\tilde{b}(\cdot)=s(S^{-1}(\cdot))b(S^{-1}(\cdot))$.
Furthermore, the Markov transition kernel associated with the process $X$ is absolutely continuous with respect to the speed measure, i.e.
$$
P(t,x,dy)=P(X_t\in dy|X_0=x)=\int p(t,x,y)M(dx)=\int p(t,x,y)m(x)dx.
$$
Furthermore \cite{1999_Revuz_yor,2000_Roger_williams,1981_karlin_taylor}, let $\tau_x=\inf\{t \ge 0: X_t=x\}$ be the first passage time that $X$ reaches $x \in (0,\eta)$; then, one can express the probability that $X$ reaches $u_2$ before $u_1$, for $ 0<u_1<x<u_2<\eta$, as a function of the scale measure in the following way:
\begin{equation*}
u(x) = P\left( \tau_{u_2} < \tau_{u_1}|X_0=x\right)=
  \frac{S(x)-S(u_1)}{S(u_2)-S(u_1)}.
\end{equation*}
This means that there is a non trivial link between the scale function, the infinitesimal transition of the process and the speed measure, in particular in terms of the passage times. Therefore, the boundary point classification can be expressed through such mathematical objects. We focus our attention on the \textit{entrance boundary}, the class of points such that if the initial point $X_0$ lies in the interior of the domain they are never reached by the process, while if $X_0$ belongs to it then the process is immediately pushed into the interior of the domain. The formal definition may be given in terms of scale and speed measures: a boundary point is an entrance point if it is possible for the process to go infinitesimally close to it, but it never reaches it, \cite{1999_Revuz_yor,2000_Roger_williams,1981_karlin_taylor}.

\begin{definition}
	Let $x\in [l,r]\subseteq(0,\eta)$.
The right boundary  $r$ is an \emph{entrance boundary} if
\begin{equation}
	\label{entranceboundaryconditions2}
	S(l,x]=\lim_{a \to l} \int_a^x s(v)dv=\infty,  \qquad  \qquad  M(l,x]=\lim_{a \to l} \int_a^x m(v)dv<\infty 
\end{equation}
The left boundary $l$ is an \emph{entrance boundary} if
\begin{equation}
	\label{entranceboundaryconditions}
	S[x,r)=\lim_{a \to r} \int_x^a s(v)dv=\infty, \qquad  \qquad M[x,r)=\lim_{a \to r} \int_x^a m(v)dv<\infty 
\end{equation}
\end{definition}
\smallskip

The following result gives conditions on the parameters to ensure that $0, \eta$ are entrance boundaries points for our process \eqref{eq:SDE_for_SO2}:

 \begin{proposition}\label{prop:boundedness_SDE}
Let  $X=(X_t)_{t\in[0,T]}$  be the solution of the Pearson diffusion \eqref{eq:SDE_for_SO2}. Then, given  the parameter $\alpha,\gamma,\sigma^2,\eta$ we define 
\begin{equation*}\label{eq:condition_SDE_parameters2}
 \nu_1 = \frac{2\alpha\gamma}{\sigma^2\eta}, \quad  \nu_2 = \frac{2\alpha(\eta-\gamma)}{\sigma^2\eta}.
\end{equation*} 
If 
\begin{equation}\label{eq:condition_SDE_parameters}
    \nu= \nu_1 \wedge \nu_2 >1,
\end{equation}
then 
both $x=0$ and $x=\eta$ are entrance boundary points. Hence, if $x_0\in [0,\eta]$, the trajectories of $X$ stay in $(0,\eta)$ with probability 1.
Furthermore,   the following bound holds, for any $T\in \mathbb R_+$ and any $p<\nu$
\begin{equation}\label{eq:bound_inverse_moments}
    \left(\sup\limits_{t\in[0,T]} \mathbb E\left[ \Psi_t^{-p}\right]\vee \sup\limits_{t\in[0,T]} \mathbb E\left[ (\eta -\Psi_t)^{-w}\right]\right)\le C_{T,p}.
\end{equation}
\end{proposition}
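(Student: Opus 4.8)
The plan is to prove the three assertions in sequence --- the entrance classification of the endpoints, the confinement of the paths to $(0,\eta)$, and the inverse-moment bounds --- the first two feeding into the third.

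\emph{Stage 1 (entrance boundaries).} With $a(x)=\alpha(\gamma-x)$ and $b^{2}(x)=\sigma^{2}x(\eta-x)$ the scale and speed densities can be computed explicitly from the partial-fraction identity
\[
\frac{2a(\xi)}{b^{2}(\xi)}=\frac{2\alpha(\gamma-\xi)}{\sigma^{2}\,\xi(\eta-\xi)}=\frac{\nu_{1}}{\xi}-\frac{\nu_{2}}{\eta-\xi}.
\]
Fixing $x_{0}\in(0,\eta)$ this gives $s(x)=c_{\star}\,x^{-\nu_{1}}(\eta-x)^{-\nu_{2}}$ and hence $m(x)=\bigl(b^{2}(x)s(x)\bigr)^{-1}=(c_{\star}\sigma^{2})^{-1}\,x^{\nu_{1}-1}(\eta-x)^{\nu_{2}-1}$, with $c_{\star}=x_{0}^{\nu_{1}}(\eta-x_{0})^{\nu_{2}}>0$. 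Inserting these into the entrance conditions of the preceding Definition: near $x=0$ the density $s(v)\asymp v^{-\nu_{1}}$ has a divergent integral (since $\nu_{1}\ge\nu>1$) while $m(v)\asymp v^{\nu_{1}-1}$ is integrable (since $\nu_{1}>0$), and the situation at $x=\eta$ is symmetric with $\nu_{2}$ in place of $\nu_{1}$. Hence \eqref{eq:condition_SDE_parameters} makes both $0$ and $\eta$ entrance boundaries.

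\emph{Stage 2 (confinement).} I would then invoke the standard consequence of the classification --- an entrance boundary is almost surely never reached from the interior, and a process started there is instantaneously driven inside --- to conclude that $\Psi_{t}\in(0,\eta)$ for every $t\in(0,T]$ with probability one whenever $\Psi_{0}\in[0,\eta]$; this also retroactively justifies the a priori hypothesis $x\in[0,\eta]$ used in the existence statement.

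\emph{Stage 3 (inverse moments).} The routine part is an It\^o/Dynkin estimate with $f(x)=x^{-p}$, localized along $\tau_{n}=\inf\{t:\Psi_{t}\notin(1/n,\,\eta-1/n)\}$, which increases to $+\infty$ almost surely by Stage 2. Since
\[
\mathcal{A}f(x)=\frac{p\sigma^{2}\eta}{2}\,(p+1-\nu_{1})\,x^{-p-1}+p\Bigl(\alpha-\tfrac{\sigma^{2}(p+1)}{2}\Bigr)x^{-p},
\]
for $p\le\nu_{1}-1$ the dominant singular term has a nonpositive coefficient, so $\mathcal{A}f\le Cf$ on $(0,\eta)$; Dynkin's formula on $[0,t\wedge\tau_{n}]$ together with Gronwall and Fatou give $\sup_{t\in[0,T]}\mathbb{E}[\Psi_{t}^{-p}]\le\Psi_{0}^{-p}e^{CT}$, and the symmetric test function $(\eta-x)^{-w}$ handles $w\le\nu_{2}-1$. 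The \emph{main obstacle} is that this elementary route falls one unit short of the claimed range: as soon as $p>\nu_{1}-1$ the function $x^{-p}$ is too singular at $0$ for the generator estimate to survive, and no power-type Lyapunov function --- nor a H\"older interpolation between admissible exponents --- bridges the gap. To reach the sharp thresholds $p<\nu_{1}$ and $w<\nu_{2}$ (which together imply the stated $p,w<\nu$) I would exploit the structure already recalled in this section: the transition kernel is absolutely continuous with respect to the speed measure, $P(t,x,dy)=p(t,x,y)\,M(dy)$, and for $t>0$ the density $p(t,x,\cdot)$ is bounded on $[0,\eta]$ --- for the Pearson/Jacobi diffusion this is the classical spectral expansion in Jacobi polynomials --- so that, by Stage 1,
\[
\mathbb{E}_{x_{0}}\!\bigl[\Psi_{t}^{-p}\bigr]=\int_{0}^{\eta}y^{-p}\,p(t,x_{0},y)\,m(y)\,dy\le \|p(t,x_{0},\cdot)\|_{\infty}\,(c_{\star}\sigma^{2})^{-1}\!\int_{0}^{\eta}y^{\nu_{1}-1-p}(\eta-y)^{\nu_{2}-1}\,dy,
\]
and the last integral is finite precisely for $p<\nu_{1}$ (the endpoint $y=\eta$ costing nothing, as $\nu_{2}>0$), with the bound for $(\eta-\Psi_{t})^{-w}$ obtained the same way. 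Because this estimate degrades as $t\to0$, uniformity over $[0,T]$ is recovered by combining it --- which already yields a uniform bound on every $[t_{0},T]$, $t_{0}>0$ --- with a standard small-time argument: from the interior point $\Psi_{0}$ the path stays in a fixed compact subset of $(0,\eta)$ on $[0,t_{0}]$ with overwhelming probability, the rare complementary event being absorbed through the Markov property and the $[t_{0},T]$ bound. (An alternative to the spectral input is a Lamperti change of variables $Y_{t}=F(\Psi_{t})$, $F(x)=\int^{x}b(y)^{-1}\,dy$, after which $\Psi_{t}$ is bounded below, up to constants, by the square of a Bessel process of dimension $2\nu_{1}$, for which $\mathbb{E}[R_{t}^{-q}]<\infty$ iff $q<2\nu_{1}$.)
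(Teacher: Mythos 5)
Your Stage 1 and Stage 2 are, up to notation, exactly the paper's argument: the same partial-fraction computation yields the scale density \eqref{eq:scale_function_pearson} and speed density \eqref{eq:speed_density_pearson}, and the entrance classification is read off from the divergence of the scale integral together with the integrability of the speed density near each endpoint under \eqref{eq:condition_SDE_parameters}. So the core of the proposition is proved correctly and by the same route. The divergence is in Stage 3: the paper does not prove the inverse-moment bound \eqref{eq:bound_inverse_moments} at all, but simply cites Forman--S\o rensen, whereas you attempt a self-contained argument. Your diagnosis of the elementary route is accurate --- the Dynkin/Lyapunov computation with $f(x)=x^{-p}$ (and the harmonic function $-S$, which gives the same threshold) only reaches $p\le \nu_1-1$, one unit short of the claimed range --- and your identification of the sharp exponent $p<\nu_1$ via $\int_0^\eta y^{-p}m(y)\,dy<\infty$ is the right target; it is what makes the stationary case immediate, since the invariant law \eqref{eq:invariantmeasure} is a Beta$(\nu_1,\nu_2)$ and the cited reference essentially works in that regime.

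The genuine gap is the small-time uniformity in your Stage 3. The spectral bound $\|p(t,x_0,\cdot)\|_\infty<\infty$ degenerates as $t\to 0$, and the proposed patch --- ``the path stays in a compact subset of $(0,\eta)$ on $[0,t_0]$ with overwhelming probability, the rare complementary event being absorbed through the Markov property'' --- does not close. On the complementary event the integrand $\Psi_t^{-p}$ is unbounded, so the smallness of its probability buys nothing without a higher-moment control (which is exactly what is being proved); and conditioning at the exit time via the strong Markov property returns $\sup_{s\le t}\mathbb E_{x'}[\Psi_s^{-p}]$ for a nearby starting point $x'$, i.e.\ the same unknown quantity --- the argument is circular as written. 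The Bessel-comparison alternative you mention in parentheses is a more promising way to cover $\nu_1-1<p<\nu_1$ uniformly down to $t=0$, but it is only asserted, not carried out. Note also that the statement itself is delicate here: if $\Psi_0\in\{0,\eta\}$ (as in the paper's own simulations, where $\Psi_0=0$) the supremum in \eqref{eq:bound_inverse_moments} cannot include $t=0$, so any honest proof must either assume an interior (or stationary) initial condition or restrict the supremum to $t>0$; neither your write-up nor the paper addresses this.
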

 \begin{proof}
After a little algebra, if   $\nu_1$ and ${\nu_2}$ are defined as in \eqref{eq:condition_SDE_parameters2}, given $x_0 \in (0,\eta)$ one obtain that the scale function is
\begin{equation}\label{eq:scale_function_pearson}
    s(x)= \frac{x_0^{\nu_1}(\eta-x_0)^{\nu_2}}{x^{\nu_1}(\eta-x)^{\nu_2}} ,
\end{equation}
while the speed measure density is
\begin{equation}\label{eq:speed_density_pearson} 
    m(x)  =   \frac{x^{{\nu_1}-1}(\eta-x)^{{\nu_2}-1}}{\sigma^2 x_0^{\nu_1}(\eta-x_0)^{\nu_2}} .
\end{equation} 
We need to prove that conditions \eqref{entranceboundaryconditions2}-\eqref{entranceboundaryconditions} hold with $l=0$ and $r=\eta$. Such conditions are closely related to the integrability of the functions \eqref{eq:scale_function_pearson}-\eqref{eq:speed_density_pearson} close to the boundary. More precisely, condition \eqref{entranceboundaryconditions2} for $x=0$ being entrance boundary is equivalent to the integrability of  $m(x)$ and the explosion of \eqref{eq:scale_function_pearson}: those happen whenever $\nu_1>1$, i.e. first condition of \eqref{eq:condition_SDE_parameters} is satisfied. The second of \eqref{eq:condition_SDE_parameters}, implies again that conditions \eqref{entranceboundaryconditions2} are satisfied around $x=\eta$.  Hence, the boundary points $0,\eta$ are entrance boundaries, and the thesis is achieved.  
 The bound \eqref{eq:bound_inverse_moments} of the inverse moments may be found in   \cite{2008_Sorensen_forman}.
	\end{proof}
\begin{remark} 
From \eqref{eq:condition_SDE_parameters}, in particular $\nu_2>1$, we get  $\eta- \gamma \geq 0$; hence, naturally the mean-reverting level lies in the interval $(0,\eta)$.
\end{remark}

\medskip

For completeness, we stress that the speed and scale functions are involved in the definition of the invariant measure for the equation \eqref{eq:SDE_for_SO2}. Indeed,  its density, when it exists, is given by
 \begin{equation*}%\label{eq:density_invariant_measure_speed}
	\widetilde{p}(x)=c_1 \frac{S(x)}{s(x)b^2(x)}+c_2 \frac{1}{s(x)b^2(x)}=c_1  S(x) +c_2 m(x),
 \end{equation*}
 where  $c_1$ and $c_2$ are such that $\widetilde{p}$ is positive and it is the density of a probability measure. In our specific case, said $\Gamma$ the Gamma function, we get that the invariant density has the form \begin{equation}\label{eq:invariantmeasure}
	\widetilde{p}(x)=\frac{1}{\eta^{{\nu_1}+{\nu_2}-2}}\frac{\Gamma({\nu_1}+{\nu_2})}{\Gamma({\nu_1})\Gamma({\nu_2})}x^{{\nu_1}-1}(\eta-x)^{{\nu_2}-1}1_{[0,\eta]}(x),
 \end{equation} 
which is the density of a $Beta(\nu_1,\nu_2)$ with support in $[0,\eta].$
 \medskip
 
In conclusion, we consider a bounded noise addeded at the boundary of the system \eqref{eq:modello_Natalini}, even though it is generated by means of Brownian motion $W$, which is an unbounded stochastic process. Furthermore, as time increases, it converges to a process with invariant distribution.

\subsection{The Lamperti Transform: towards a constant diffusion}

\medskip
The process transform \eqref{eq:S(Psi)} produces a diffusion process with zero drift. In this section we consider a different transform with the aim to obtain a new stochastic process with a constant diffusion coefficient, so that we may bypass the issues typically connected to a state-dependent diffusion coefficient.
A typical way to obtain this goal is to perform a time-change for the Brownian motion, \cite{2021_Capasso}. Since by considering a transformation $Y=F(\Psi)$ of the solution of equation \eqref{eq:SDE_for_SO2} and applying It\^o formula  we get 
\begin{align}\label{eq:ito lamperti}
  dY_t & =  
    \left( F^\prime(\Psi)a(\Psi_t)+ \frac{1}{2} F^{\prime\prime}(\Psi_t)b^2(\Psi_t)	\right) dt + F^\prime(\Psi)b(\Psi_t) dW_t,
\end{align} a natural  transform of the diffusion coefficient $b$ defined by  \eqref{eq:diffusion_Pearson} is the following
\begin{align*}
	y=F(z) & =   \sigma\int^z \frac{du}{ b(u)} =\int^z \frac{du}{\sqrt{u(\eta-u)}}  
	= {2}\arcsin\biggl(\sqrt{\frac{{z}}{{\eta}}}\biggr).
\end{align*}

\medskip
Hence, we apply the so-called \emph{Lamperti transform}
\begin{equation}\label{eq:Lamperti_trasform}
  Y=F(\Psi)={2}\arcsin\left(\sqrt{\frac{\Psi}{\eta}}\right),
\end{equation}
which is well defined since by Proposition \eqref{prop:boundedness_SDE}  we have that $\Psi\in(0,\eta)$. The process $\Psi$ can be recovered by the anti-transform
\begin{equation}\label{eq:Lamperti_anti_trasform}
	\Psi = F^{-1}(Y) = \eta\sin^2\biggl(\frac{ Y}{2}\biggr).
\end{equation}
The process \eqref{eq:Lamperti_trasform} is indeed a solution of a SDE with constant diffusion $\sigma$.

\begin{proposition}
Let $\Psi \in (0,\eta)$ be the unique pathwise solution of the SDE \eqref{eq:SDE_for_SO2}, such that condition \eqref{eq:condition_SDE_parameters} is satisfied. 	For $t\in[0,T], T\in\mathbb R_+$, the process $Y_t=F(\Psi_t)\in (0,\pi)$, with $F$ given by \eqref{eq:Lamperti_trasform} is solution  of the following SDE
	\begin{equation}\label{eq:Lamperti_equation}
		d Y_t = \left[ a_1 \cot\left(\frac{ Y_t}{2}\right)- a_2 \tan\left(\frac{ Y_t}{2}\right)\right]dt+ \sigma dW_t.
	\end{equation}
with $a_1, a_2$ are positive constants given by
\begin{equation}\label{eq:Lamperti_SDE_coefficients}
a_1=\frac{4\alpha\gamma-\sigma^2\eta}{4\eta}, \qquad a_2 =\frac{4\alpha(\eta- \gamma)-\sigma^2\eta}{4\eta}.
\end{equation}
Furthermore, there is a unique point \begin{equation}\label{eq:Lamperti_equation_x*}y^*=
2\arctan\left(\sqrt{\frac{4\alpha\gamma-\sigma^2\eta}{4\alpha(\eta- \gamma)-\sigma^2\eta  }}\right)\end{equation} such that 
the non Lipschitz drift
\begin{equation}\label{eq:drift_post_lamperti}
f(y)=  a_1 \cot\left(\frac{ y}{2}\right)- a_2 \tan\left(\frac{ y}{2}\right),
\end{equation}
satisfies $f(y^*) =0.$
\end{proposition}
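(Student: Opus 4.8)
The plan is to verify the three assertions in order: the range of $Y$, the form of its stochastic differential equation via Itô's formula, and the existence and uniqueness of the zero $y^*$ of the drift \eqref{eq:drift_post_lamperti}. The starting observation is the range of $Y$: by Proposition \ref{prop:boundedness_SDE}, under condition \eqref{eq:condition_SDE_parameters} the trajectories of $\Psi$ remain in the open interval $(0,\eta)$ (for every $t>0$, and for all $t\in[0,T]$ when $\Psi_0\in(0,\eta)$), so $\sqrt{\Psi_t/\eta}\in(0,1)$ and hence $Y_t=2\arcsin(\sqrt{\Psi_t/\eta})\in(0,\pi)$; in particular $\sin Y_t$, $\sin(Y_t/2)$ and $\cos(Y_t/2)$ do not vanish along a trajectory, which is exactly what keeps the drift $f(Y_t)$ finite.

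For the equation satisfied by $Y$ I would apply the Itô formula \eqref{eq:ito lamperti} to $F$ given by \eqref{eq:Lamperti_trasform}. One first checks that $F'(z)=\bigl(z(\eta-z)\bigr)^{-1/2}=\sigma/b(z)$, so that the martingale part collapses to $F'(\Psi_t)b(\Psi_t)\,dW_t=\sigma\,dW_t$, which is the announced constant diffusion coefficient. For the drift one computes $F''(z)=-\tfrac12(\eta-2z)\bigl(z(\eta-z)\bigr)^{-3/2}$ and substitutes into $F'(z)a(z)+\tfrac12 F''(z)b^2(z)$; collecting over the common factor $\bigl(z(\eta-z)\bigr)^{-1/2}$ gives a drift of the form $\bigl(z(\eta-z)\bigr)^{-1/2}\bigl[(\alpha\gamma-\tfrac{\sigma^2\eta}{4})-(\alpha-\tfrac{\sigma^2}{2})\,z\bigr]$. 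Then I would pass to the variable $y$ through the anti-transform \eqref{eq:Lamperti_anti_trasform}, $z=\eta\sin^2(y/2)$, using $\sqrt{z(\eta-z)}=\tfrac{\eta}{2}\sin y$ and $\cos^2(y/2)=1-z/\eta$: the bracket becomes $\tfrac1\eta\bigl[a_1\eta-(a_1+a_2)z\bigr]=a_1\cos^2(y/2)-a_2\sin^2(y/2)$ as soon as $a_1\eta=\alpha\gamma-\tfrac{\sigma^2\eta}{4}$ and $a_1+a_2=\alpha-\tfrac{\sigma^2}{2}$, and dividing by $\tfrac{\eta}{2}\sin y=\eta\sin(y/2)\cos(y/2)$ produces exactly $a_1\cot(y/2)-a_2\tan(y/2)$. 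Solving the two linear relations for $a_1,a_2$ returns the formulas \eqref{eq:Lamperti_SDE_coefficients}, and positivity is immediate since $a_1>0\iff\nu_1>\tfrac12$ and $a_2>0\iff\nu_2>\tfrac12$, both implied by \eqref{eq:condition_SDE_parameters} (which forces $\nu_1,\nu_2>1$).

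The one genuinely delicate point, which I expect to be the main obstacle, is that $F$ is only $C^2$ on the open interval $(0,\eta)$ with $F'$ blowing up at the endpoints, so Itô's formula cannot be applied to $F(\Psi_t)$ verbatim on $[0,T]$. I would handle this by localization: set $\tau_n=\inf\{t\ge 0:\Psi_t\le 1/n\ \text{or}\ \Psi_t\ge \eta-1/n\}$, apply Itô on $[0,t\wedge\tau_n]$ where $F$ and its derivatives are bounded, and let $n\to\infty$, invoking the entrance-boundary property from Proposition \ref{prop:boundedness_SDE} to conclude $\tau_n\uparrow T$ (resp.\ $\uparrow\infty$) almost surely; continuity of the trajectories then transfers the identity to the full interval. (When $\Psi_0\in\{0,\eta\}$ one first works on $[\delta,T]$ and lets $\delta\downarrow 0$, using that $\Psi_t\in(0,\eta)$ for $t>0$.)

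Finally, for the zero of the drift, for $y\in(0,\pi)$ one has $y/2\in(0,\pi/2)$, so $f(y)=0$ is equivalent to $a_1\cos^2(y/2)=a_2\sin^2(y/2)$, i.e.\ $\tan(y/2)=\sqrt{a_1/a_2}$ (the positive root, since $\tan(y/2)>0$). This has the unique solution $y^*=2\arctan\bigl(\sqrt{a_1/a_2}\bigr)$, and substituting \eqref{eq:Lamperti_SDE_coefficients} gives \eqref{eq:Lamperti_equation_x*}. Uniqueness also follows directly from monotonicity: $\cot(y/2)$ and $-\tan(y/2)$ are both strictly decreasing on $(0,\pi)$, hence $f$ is strictly decreasing there with $f(0^+)=+\infty$ and $f(\pi^-)=-\infty$, so it vanishes at exactly one point.
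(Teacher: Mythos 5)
Your proof is correct and follows essentially the same route as the paper: apply Itô's formula to the Lamperti transform to read off the constant diffusion $\sigma$ and the transformed drift, substitute $z=\eta\sin^2(y/2)$ to obtain \eqref{eq:drift_post_lamperti} with the coefficients \eqref{eq:Lamperti_SDE_coefficients}, and solve $\tan^2(y/2)=a_1/a_2$ for the unique zero $y^*$. The only substantive difference is that you add a localization argument to justify Itô's formula on the open interval $(0,\eta)$ and an explicit monotonicity argument for the uniqueness of $y^*$, both points the paper treats as immediate.
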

\begin{proof}
From \eqref{eq:ito lamperti} and \eqref{eq:Lamperti_trasform}, it is trivial that the new diffusion coefficient is $\tilde{\sigma}=\sigma$, while the new drift $\tilde{a}$ has to be calculated. For any $t$ and $Y_t\in (0,\pi)$
	\begin{align*}
	\tilde{a}\left(Y_t\right)&= 	\frac{\sigma a(F^{-1}(Y_t))}{b(F^{-1}(Y_t))}  - \frac{\sigma}{2}  b^\prime(F^{-1}(Y_t))   \\
		& = \frac{\alpha(\gamma-(F^{-1}(Y)))}{\sqrt{F^{-1}(Y)(\eta-(F^{-1}(Y)))}}-\frac{\sigma^2}{4}\frac{\eta-2(F^{-1}(Y))}{\sqrt{F^{-1}(Y)(\eta-(F^{-1}(Y)))}}\\
		& = \frac{\alpha(\gamma-\eta\sin^2\left(\frac{Y}{2})\right)}{\sqrt{\eta\sin^2\left(\frac{Y}{2}\right)\left(\eta-\eta\sin^2\left(\frac{Y}{2}\right)\right)}}
  -\frac{\sigma^2}{4} 
  \frac{\eta-2\eta\sin^2\left(\frac{Y}{2}\right)}{\sqrt{\eta\sin^2\left(\frac{Y}{2}\right)\left(\eta-\eta\sin^2\left(\frac{Y}{2}\right)\right)}}
\end{align*}
After a little algebra, equation \eqref{eq:Lamperti_equation} with parameters \eqref{eq:Lamperti_SDE_coefficients}, can be achieved. The existence and pathwise uniqueness of the solution to \eqref{eq:Lamperti_equation} derive from the existence and uniqueness of the solution to \eqref{eq:SDE_for_SO2}, due to the monotonicity of the Lamperti transform. The existence of $y^*$ is trivial. \end{proof}
From \eqref{eq:drift_post_lamperti} one can easily show the following monotonicity result.
\begin{proposition}\label{prop:properties_f_post_Lamperti}
The drift \eqref{eq:drift_post_lamperti} shows a superlinear growth in $(0,\pi)$ and for any $y \in [0,\pi]$ \begin{equation}
\label{eq:derivata_negativa_f_lamperti}   f^\prime(y) \leq -C_0,
\end{equation}
where $C_0= (2\alpha- {\sigma}^2)/{4}>0.$
\end{proposition}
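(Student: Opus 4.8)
The plan is to establish the two assertions separately, each by a direct computation from the explicit expression \eqref{eq:drift_post_lamperti} of $f$, after first extracting from the standing hypothesis \eqref{eq:condition_SDE_parameters} the positivity of all the constants involved. Observe that $\nu_1>1$ rewrites as $2\alpha\gamma>\sigma^2\eta$, hence $4\alpha\gamma-\sigma^2\eta>\sigma^2\eta>0$, so $a_1>0$ by \eqref{eq:Lamperti_SDE_coefficients}, and symmetrically $\nu_2>1$ gives $a_2>0$; moreover $\nu_1+\nu_2=2\alpha/\sigma^2>2$, i.e. $\alpha>\sigma^2$, so $2\alpha-\sigma^2>0$ and $C_0>0$. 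For the growth claim, I would then simply inspect the endpoints: as $y\to0^+$ one has $\cot(y/2)\sim 2/y\to+\infty$ while $\tan(y/2)\to0$, so $f(y)\to+\infty$ (in fact $f(y)\sim 2a_1/y$); as $y\to\pi^-$ one has $\tan(y/2)\to+\infty$ while $\cot(y/2)\to0$, so $f(y)\to-\infty$. Since $f$ is thus unbounded above and below on the bounded interval $(0,\pi)$, it cannot be dominated by any affine function, which is the asserted superlinear growth.

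For the derivative estimate, I would differentiate \eqref{eq:drift_post_lamperti} using $\tfrac{d}{dy}\cot(y/2)=-\tfrac12\csc^2(y/2)$ and $\tfrac{d}{dy}\tan(y/2)=\tfrac12\sec^2(y/2)$, which yields, for $y\in(0,\pi)$,
\[
f'(y)=-\frac{a_1}{2\sin^2(y/2)}-\frac{a_2}{2\cos^2(y/2)}.
\]
Both summands are strictly negative, and since $\sin^2(y/2)\le1$ and $\cos^2(y/2)\le1$ we get $\frac{a_1}{2\sin^2(y/2)}\ge\frac{a_1}{2}$ and $\frac{a_2}{2\cos^2(y/2)}\ge\frac{a_2}{2}$, hence $f'(y)\le-\tfrac12(a_1+a_2)$. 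It then remains to identify this with $-C_0$: from \eqref{eq:Lamperti_SDE_coefficients}, $a_1+a_2=\frac{4\alpha\eta-2\sigma^2\eta}{4\eta}=\frac{2\alpha-\sigma^2}{2}$, so $\tfrac12(a_1+a_2)=\frac{2\alpha-\sigma^2}{4}=C_0$, and $C_0>0$ as noted above. For the endpoints $y\in\{0,\pi\}$ the inequality is to be read in the limiting sense $f'(0)=f'(\pi)=-\infty$, which holds trivially.

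I do not expect a genuine obstacle here: the only points requiring care are the bookkeeping that turns the constraint $\nu>1$ into $a_1,a_2,C_0>0$, and the convention at the two endpoints where $f'$ blows up. One could, if a sharp constant were desired, instead minimise $g(u)=\frac{a_1}{2u}+\frac{a_2}{2(1-u)}$ over $u=\sin^2(y/2)\in(0,1)$, obtaining the optimal value $\tfrac12(\sqrt{a_1}+\sqrt{a_2})^2$, which dominates $\tfrac12(a_1+a_2)=C_0$ since $2\sqrt{a_1a_2}\ge0$; but the cruder termwise bound already gives exactly the stated inequality \eqref{eq:derivata_negativa_f_lamperti}.
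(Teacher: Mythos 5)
Your proposal is correct and follows essentially the same route as the paper, which simply differentiates \eqref{eq:drift_post_lamperti} and invokes \eqref{eq:condition_SDE_parameters} for $C_0>0$; you have merely supplied the details the paper omits (the explicit formula $f'(y)=-\tfrac{a_1}{2}\csc^2(y/2)-\tfrac{a_2}{2}\sec^2(y/2)$, the identification $\tfrac12(a_1+a_2)=C_0$, and the derivation of $a_1,a_2,C_0>0$ from $\nu>1$), all of which check out.
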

\begin{proof}
The inequality \eqref{eq:derivata_negativa_f_lamperti} is obtained simply by differentiating the \eqref{eq:drift_post_lamperti}. The condition  $C_0>0$  derives easily from condition \eqref{eq:condition_SDE_parameters}.
\end{proof}
Thus, the Lamperti transform is a useful tool to overcome the issue of dealing with an SDE \eqref{eq:SDE_for_SO2} with a not Lipschitz diffusion coefficient. Indeed, in the case of not Lipschitz diffusion coefficient the standard numerical approximation of the equation are not able to preserve the boundaries. This implies that monotonicity and stability problems can arise.

\subsection{Numerical approximation of the boundary condition:Backward Euler-Maruyama scheme}

We are interested in the numerical approximation of the SDE \eqref{eq:SDE_for_SO2} in $[0,\eta]$. Our goal is to construct explicit numerical schemes preserving the properties of the SDE. It is well known that the Eulero-Maruyama scheme does not preserve the boundary of the domain since, in the approximation close to the boundary, the Wiener increment at the next time step could be large enough to force the solution out of the interval $(0,\eta)$, \cite{1992_Kloeden_Platen,2011_Dangerfield_boundary preserving}. The boundedness of the numerical solution can be maintained by adding conditions for absorption or reflection at the boundary, \cite{2010_Lord_Koekkoek_Dijk}, but these conditions may introduce a bias into the numerical solution. To overcome such a problem,  we first apply the Lamperti transform \eqref{eq:Lamperti_equation}, so that we do not care about the boundary problem for the diffusion coefficient. Then, by \eqref{eq:Lamperti_anti_trasform}, we recover the original discretised stochastic process. 
In order to consider a numerical scheme for the additive model \eqref{eq:Lamperti_equation}, let us notice that the drift \eqref{eq:drift_post_lamperti} is only one-sided Lipschitz, periodic decreasing in $(0,\pi)$, as shown by Proposition \ref{prop:properties_f_post_Lamperti}; a regularization is necessary both to preserve the monotonicity and to avoid the production of divergence solutions by the Eulero-Maruyama scheme \cite{2012_kloeden,2021_chen_Gan_wang}. 
The properties of the drift lead to a natural choice to truncate it and to go beyond the problem of monotonicity.
Smooth truncation is used to overcome
the difficulties caused by the presence of non Lipschitz and super growing drift coefficient, \cite{2012_kloeden}, while the
sloping truncation allows the method to maintain the monotonicity of the
drift, \cite{2021_chen_Gan_wang,2023_chen_LSST}. The property of monotonicity is  important also for the stability of the distribution. 
Given    $
    y^* \in (0,\pi)$ in \eqref{eq:Lamperti_equation_x*}, let   $k>0$ a fixed constant and define
\begin{equation}\label{eq:def_delta_star_SDE}
    \Delta^* = (y^* \wedge (\pi-y^*) \wedge 1)^{\frac{1}{k}}.
\end{equation}
The idea is to smooth the drift close to the boundary via a first-order Taylor expansion and regularize it outside $[0,\pi]$. 
For every $\Delta < \Delta^*$, let us define the following partition of $\mathbb R:$
$$
P=\{P_1,P_2,P_3,P_4,P_5\}=\{(-\infty,0),[0,\Delta^k),[\Delta^k,\pi-\Delta^k], (\pi-\Delta^k, \pi],(\pi,\infty)\}.
$$ 
Therefore, a truncated drift function for the SDE \eqref{eq:Lamperti_equation}, denoted by $f_{\Delta}$ is given by:
 $$   f_{\Delta}(y)  \! := \! \! 
    \begin{cases}
   f(\pi-\Delta^k)+\frac{1}{2} \Delta^k f'(\pi-\Delta^k)-C_0(y-\pi+\frac{1}{2}\Delta^k), \ \ & y\in P_5;  \\
    f(\pi-\Delta^k)+f'(\pi-\Delta^k)(y-\pi+\Delta^k)&\\
    \hspace{1cm}-\frac{1}{2\Delta^k}(f'(\pi-\Delta^k)+C_0)(y-\pi+\Delta^k)^2, \ \ & y\in P_4;  \\
  f(x), \ \ & x\in P_3;  \\
  f(\Delta^k)+f'(\Delta^k)(x-\Delta^k)+\frac{1}{2\Delta^k}(f'(\Delta^k)+C_0)(y-\Delta^k)^2, \ \ &    y\in P_2;  \\
  f(\Delta^k)-\frac{1}{2}\Delta^k f'(\Delta^k)-C_0(y-\frac{1}{2}\Delta^k), \ \ & y\in P_1. 
\end{cases}
$$
Figure \ref{fig:f_truncated} shows the effect of the smoothing truncation on the drift $f$ given by \eqref{eq:drift_post_lamperti} for a specific set of parameters and different diffusion coefficients.
\begin{figure}[h]
\centering
\includegraphics[width=0.9\textwidth]{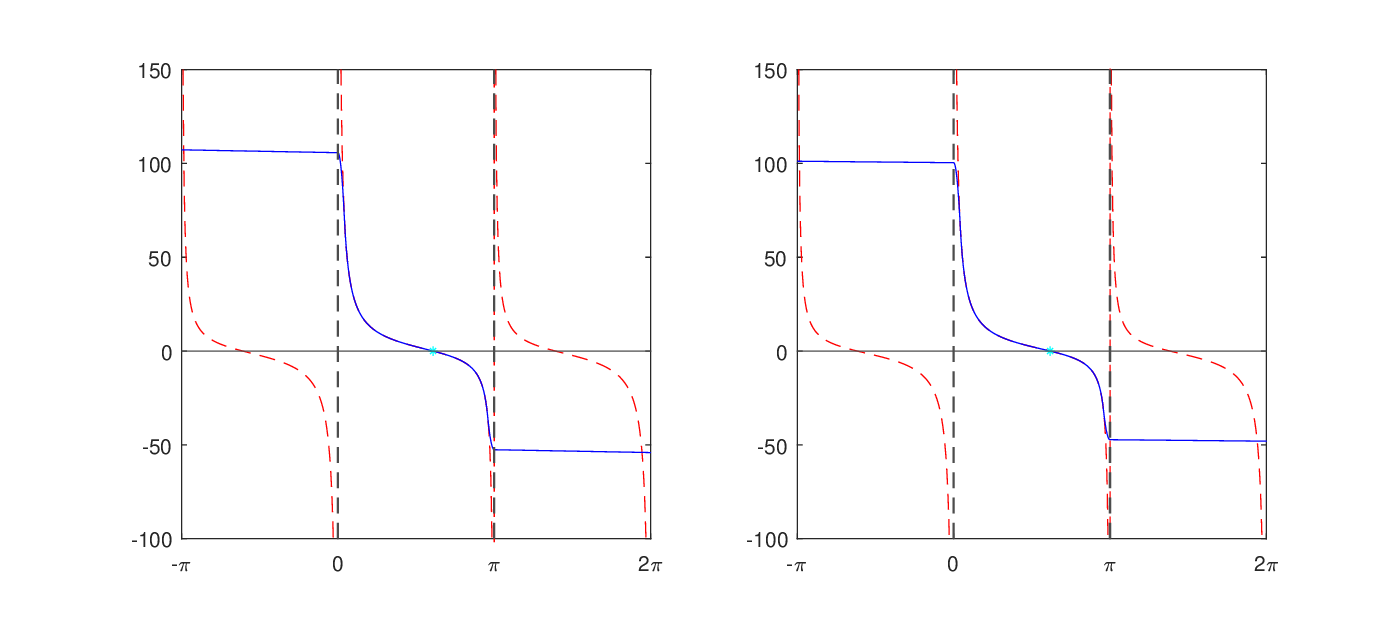}
 {\hspace{3.3cm} (a) \hspace{4.5cm} (b)}

\caption{Drift $f(x)$  of equation \eqref{eq:Lamperti_equation} (dashed line) and the LSST drift $f_{\Delta}(x)$, (solid line) for $\alpha=7, \gamma=1,\eta=1.5, \Delta=10^{-4}, k=0.22$. (a) $\sigma_1=0.25$, $x^*=1.912$ , $C_0=0.48$; (b) $\sigma_3=1$, $x^*=1.938$, $C_0=0.25$.}\label{fig:f_truncated}
\end{figure}

In $[0,\pi]$ function $f$ is monotone decreasing, periodic of period $\pi$ and super growing to infinity and minus infinity close to $0$ and $\pi$;   $f_\Delta$ is smoothly truncated close to the boundary and regularized outside, and monotonicity is preserved. We see that for the two different diffusion coefficients, $f_\delta$ does not change too much; only the slop of the regularized part outside $[0,\pi]$ is a bit larger for $\sigma=0.25$.
The new drift $f_\Delta$ is Lipschitz continuous and one-side Lipschitz continuous. Furthermore,  the drift $f_\Delta$ is a good approximation of the continuous drift $f$. Indeed, the following result is proven in    \cite{2023_chen_LSST}.

\newpage

\begin{proposition}
    Suppose that the assumption \eqref{eq:condition_SDE_parameters} holds. Let $0 < k < 1$ and $\Delta < \Delta^*$ be given. Then, for any $x  \in [0,\eta]$
    $$f^\prime_\Delta(x)\le -C_0;$$
    and for any $x,y \in [0,\eta]$
\begin{eqnarray*}
    \left| f_\Delta(x)-f_\Delta(y) \right| &\le& \pi^2 C_0 \Delta^{-2k} |x-y|,\\
     (x-y)\left( f_\Delta(x)-f_\Delta(y) \right) &\le&   -C_0   |x-y|^2.
\end{eqnarray*}
Furthermore, if $\nu>3$, where $\nu$ is defined in \eqref{eq:condition_SDE_parameters}, for any $1<p<\nu$, and $\kappa_1=2k(p-1)$
\begin{equation*}
\sup_{t\in[0,T]}\mathbb E\left[\left| f(Y_t)-f_\Delta(Y_t)\right|^2\right]\le C_{T,p} \Delta^{\kappa_1}.
\end{equation*}
\end{proposition}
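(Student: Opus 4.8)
The plan is to work directly from the explicit five‑piece definition of $f_\Delta$, reducing the first two claims to elementary piecewise calculus and using Proposition~\ref{prop:properties_f_post_Lamperti} on the central block and Proposition~\ref{prop:boundedness_SDE} for the boundary layers. First I would verify that $f_\Delta\in C^1(\mathbb{R})$ with $f'_\Delta\le -C_0$ everywhere (which a fortiori gives the stated bound on the relevant interval), by examining the pieces one at a time. On $P_3=[\Delta^k,\pi-\Delta^k]$ one has $f_\Delta=f$, so $f'_\Delta=f'\le -C_0$ by Proposition~\ref{prop:properties_f_post_Lamperti}. On $P_1$ and $P_5$ the function is affine with slope exactly $-C_0$. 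On $P_2=[0,\Delta^k)$ differentiation gives $f'_\Delta(y)=f'(\Delta^k)+\Delta^{-k}\bigl(f'(\Delta^k)+C_0\bigr)(y-\Delta^k)$, an affine function of $y$ that equals $-C_0$ at $y=0$ and $f'(\Delta^k)\le -C_0$ at $y=\Delta^k$; since $f'(\Delta^k)+C_0\le 0$ it stays $\le -C_0$ throughout, and $P_4$ is handled identically via $y\mapsto\pi-y$. Matching of values and derivatives at $0,\Delta^k,\pi-\Delta^k,\pi$ is a routine check. Once $f'_\Delta\le -C_0$ is known, the one‑sided Lipschitz inequality $(x-y)\bigl(f_\Delta(x)-f_\Delta(y)\bigr)\le -C_0|x-y|^2$ follows immediately from $f_\Delta(x)-f_\Delta(y)=\int_y^x f'_\Delta(s)\,ds$.

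For the global Lipschitz constant I would estimate $\sup_{\mathbb{R}}|f'_\Delta|$. Outside $[0,\pi]$ and at the endpoints $|f'_\Delta|=C_0$; on $P_2$ and $P_4$ the affine expression above is squeezed between $-C_0$ and $f'(\Delta^k)$ (respectively $f'(\pi-\Delta^k)$); on $P_3$ the quantity $|f'|$ is maximal at the endpoints. Hence $\sup|f'_\Delta|\le |f'(\Delta^k)|\vee|f'(\pi-\Delta^k)|$. Differentiating \eqref{eq:drift_post_lamperti} gives $|f'(y)|=\tfrac{a_1}{2}\csc^2(y/2)+\tfrac{a_2}{2}\sec^2(y/2)$, and using $\sin u\ge 2u/\pi$ on $[0,\pi/2]$, the analogous cosine bound, and $\Delta^k\le 1$, one gets $\csc^2(\Delta^k/2)\le\pi^2\Delta^{-2k}$ and $\sec^2(\Delta^k/2)\le\pi^2\Delta^{-2k}$, whence $|f'(\Delta^k)|\le\tfrac12(a_1+a_2)\pi^2\Delta^{-2k}$ and similarly for $f'(\pi-\Delta^k)$. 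A one‑line computation from \eqref{eq:Lamperti_SDE_coefficients} and the definition of $C_0$ yields $a_1+a_2=2C_0$, so $\sup|f'_\Delta|\le\pi^2C_0\Delta^{-2k}$, and the Lipschitz bound follows from the mean value theorem.

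The $L^2$ approximation rate is the part with genuine probabilistic content and the one I expect to be the main obstacle. The structural point is that $f_\Delta\equiv f$ on $P_3$ and, by Proposition~\ref{prop:boundedness_SDE}, $Y_t=F(\Psi_t)\in(0,\pi)$ almost surely, so $f(Y_t)-f_\Delta(Y_t)$ is supported on the thin boundary layers $\{Y_t<\Delta^k\}$ and $\{Y_t>\pi-\Delta^k\}$. On those layers I would bound $|f(Y_t)-f_\Delta(Y_t)|\le|f(Y_t)|+|f_\Delta(Y_t)|$: the cotangent/tangent form of $f$ gives $|f(y)|\le C/y$ on $(0,\Delta^k]$, while $f_\Delta$, being piecewise polynomial with coefficients controlled by $f(\Delta^k)$ and $f'(\Delta^k)$ (the latter only of order $\Delta^{-2k}$, which after multiplication by the interval length $\Delta^k$ is harmless), satisfies $|f_\Delta(y)|\le C\Delta^{-k}\le C/Y_t$ there; hence $|f(Y_t)-f_\Delta(Y_t)|^2\le C\,Y_t^{-2}$, and symmetrically $\le C(\pi-Y_t)^{-2}$ on the other layer. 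The Lamperti inversion \eqref{eq:Lamperti_anti_trasform} with $\arcsin z\ge z$ converts this into $Y_t^{-2}\le\tfrac{\eta}{4}\Psi_t^{-1}$ together with $\{Y_t<\Delta^k\}\subseteq\{\Psi_t\le\tfrac{\eta}{4}\Delta^{2k}\}$, and mirror statements for $\eta-\Psi_t$. Then for $1<p<\nu$,
\begin{equation*}
\mathbb{E}\Bigl[\Psi_t^{-1}\,1_{\{\Psi_t\le c\Delta^{2k}\}}\Bigr]\le(c\Delta^{2k})^{p-1}\,\mathbb{E}\bigl[\Psi_t^{-p}\bigr]\le C_{T,p}\,\Delta^{2k(p-1)},
\end{equation*}
by the inverse‑moment bound \eqref{eq:bound_inverse_moments}, and likewise for $(\eta-\Psi_t)^{-1}$; summing the two contributions yields $\sup_{t\in[0,T]}\mathbb{E}[|f(Y_t)-f_\Delta(Y_t)|^2]\le C_{T,p}\Delta^{\kappa_1}$ with $\kappa_1=2k(p-1)$, and $\nu>3$ is what leaves room to take $p$ large and keep $\kappa_1$ of useful size in the ensuing error analysis. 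The delicate point is exactly the bookkeeping in this step: one must pin down the precise order of the singularity of $f$ and of $f_\Delta$ at $0$ and $\pi$ and split the negative power of $\Psi_t$ so that the resulting exponent is $2k(p-1)$ rather than a strictly smaller one.
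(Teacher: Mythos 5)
The paper does not actually prove this proposition: it imports the statement verbatim from the reference \cite{2023_chen_LSST} (``Indeed, the following result is proven in \cite{2023_chen_LSST}''), so there is no in-paper argument to compare against. Your self-contained proof is essentially sound and follows the route one would expect the cited source to take. The piecewise verification of $f_\Delta'\le -C_0$ is correct (the affine interpolation of the derivative on $P_2$ and $P_4$ between $-C_0$ at the outer endpoint and $f'(\Delta^k)\le -C_0$ at the inner one is exactly the point of the sloping--smooth construction), the identity $a_1+a_2=2C_0$ checks out from \eqref{eq:Lamperti_SDE_coefficients}, and the reduction of the $L^2$ estimate to the boundary layers $\{Y_t<\Delta^k\}\cup\{Y_t>\pi-\Delta^k\}$ followed by the H\"older-type interpolation $\mathbb{E}[\Psi_t^{-1}1_{\{\Psi_t\le c\Delta^{2k}\}}]\le (c\Delta^{2k})^{p-1}\mathbb{E}[\Psi_t^{-p}]$ against the inverse-moment bound \eqref{eq:bound_inverse_moments} is the right mechanism and does produce the exponent $\kappa_1=2k(p-1)$. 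Two small points deserve tightening: the claim that $|f'|$ attains its maximum on $P_3$ at the endpoints needs a reason --- it follows because $\csc^2(y/2)$ and $\sec^2(y/2)$ are each convex on $(0,\pi)$, so $|f'|=\tfrac{a_1}{2}\csc^2(y/2)+\tfrac{a_2}{2}\sec^2(y/2)$ is convex and hence endpoint-maximal; and the bound $\sec^2(\Delta^k/2)\le\pi^2\Delta^{-2k}$ is true but only because $\Delta^k\le 1$ keeps the argument away from $\pi/2$ and $\Delta^{-2k}\ge 1$, which is worth saying explicitly. Neither affects the validity of the argument.
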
 

\medskip

The overall numerical procedure is called the \emph{Lamperti Sloping Smooth Truncation} (LSST). Let us consider an equi-partition of $[0,T]$ 
\begin{equation}\label{eq:time_partition}  \Xi_t =  [0=t_0, t_1, \dots, t_N=T], \end{equation} 
and set $ \Delta_t =\frac{T}{N}$, $N\in \mathbb N$. 
We adopt the following notation for a general function $u$ evaluated at the mesh nodes, $u^n=u(t_n)$, for any $n\in \{0,\ldots,N\}.$ 
 
\medskip
The sampling scheme of the stochastic boundary condition $\Psi$ solution of equation \eqref{eq:SDE_for_SO2} is obtained by numerically sampling $(Y,\Psi)$, solutions of \eqref{eq:Lamperti_equation} and \eqref {eq:Lamperti_anti_trasform}, respectively by adopting the LSST procedure that reads as follows: given  $\Psi_0=\psi^0$, set  $y^0={2}\arcsin\left(\sqrt{ {\psi^0}/{\eta}}\right)$ and, for $n=1,\dots,N$

\smallskip

\begin{equation}\label{eq:LSTT_scheme}
    \begin{split}
 \mbox{\emph{LSST Scheme: \hspace{1cm}}}       y^{n} & = y^{n-1} + f_\Delta(y^{n-1})\Delta_t   + \sigma \Delta W^{n}, \hspace{2.3cm} \\
        \psi^{n} &  = \sin^2\left( {y^{n}}/{2}\right),
    \end{split}
\end{equation}
where $\Delta W^j=W^j-W^{j-1}, \;j=1,\dots, N$ are   independent Brownian increments.

\medskip

Given the LSST scheme \eqref{eq:LSTT_scheme}, let $ {y}_t, \bar{\psi}_t$ be the time continuous LSST approximation (TC-LSST)   defined for any $t\in[0,T]$ as follows. Let  $\bar{f}_\Delta$ be the function defined as 
$$
\bar{f}_\Delta(y_t)=f_\Delta(y^n), \quad  t\in [t_n,t_{n+1}).
$$
The time continuous LSST approximation of the scheme \eqref{eq:LSTT_scheme}, given $W=(W_t)_{t\in [0,T]}$ a Wiener process, is provided by 
 \begin{equation}\label{eq:LSTT_scheme_time_continous}
    \begin{split}
        \mbox{\emph{TC-LSST Scheme: \hspace{1cm}}}        {y}_t & = {y}^0+ \int_0^t \bar{f}_\Delta(y_t)dt + \sigma W_t,  \hspace{2.3cm}\\
         {\psi}_t&  = \sin^2\left( {{y}_t }/{2}\right).
    \end{split}
\end{equation}

 \medskip
 
In   \cite{2023_chen_LSST} it is proven that the LSST 
numerical approximation has not only strong convergence, but the rate of convergence, under specific assumptions on the coefficients, may go up to one.
 
\begin{proposition}\label{prop:convergence_SDE} 
Suppose $\nu> 3$ and take  $p, k, \bar{r}$ in the ranges according to the  following cases

\begin{center}
\begin{tabular}{c| c c c c }
    Case & $\nu$ & $p$ & $k$  &$\bar{r}$\\ 
    & & & &   \\[-1em]
    \hline
    Case 1 &  $(3,13/4]$& $(3,\nu)$& $1/4$& 1\\
    & & & &   \\[-1em]
    Case 2 & $( 13/4,\infty)$&$(\sqrt{13 \nu}/2,\nu)$&$2p/9\nu$& $ 2k(p-1)\wedge 2$
\end{tabular}
 \end{center}
 
Then, for any $\Delta_t < \Delta^*$, the TC-LSST scheme $( {y}_t, {\psi}_t)_{t\in[0,1]}$ defined by \eqref{eq:LSTT_scheme_time_continous}  have the
following strong convergence
\begin{equation*}
    \sup_{t\in[0,T]} \mathbb E\left[|Y_t-y_t|^2\right],\sup_{t\in[0,T]} \mathbb E\left[|\Psi_t- {\psi}_t|^2\right]\le C_{T,p} \Delta^{\bar{r}}.
\end{equation*}\label{prop:LSTT_convergence}
\end{proposition}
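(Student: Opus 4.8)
The plan is to follow the standard strategy for strong convergence of one-step schemes applied to SDEs with superlinearly growing, one-sided Lipschitz drift and (here) additive noise, adapted to the fact that the true drift $f$ has been replaced by the Lipschitz surrogate $f_\Delta$. First I would write the error decomposition $Y_t - y_t = \int_0^t\big(f(Y_s) - \bar f_\Delta(y_s)\big)\,ds$ and split the integrand as $\big(f(Y_s)-f_\Delta(Y_s)\big) + \big(f_\Delta(Y_s)-f_\Delta(y_s)\big) + \big(f_\Delta(y_s)-\bar f_\Delta(y_s)\big)$. The first term is controlled by the truncation-error estimate $\sup_{t}\mathbb E[|f(Y_t)-f_\Delta(Y_t)|^2]\le C_{T,p}\Delta^{\kappa_1}$ from the preceding proposition (note $\kappa_1 = 2k(p-1)$, consistent with the $\bar r$ column); the second by the global one-sided Lipschitz bound $(x-y)(f_\Delta(x)-f_\Delta(y))\le -C_0|x-y|^2$; the third is the "piecewise-constant freezing" error $|\bar f_\Delta(y_s)-f_\Delta(y^n)|$ on $[t_n,t_{n+1})$, which vanishes identically by definition, so the genuine one-step error hides in $f_\Delta(y_s)-f_\Delta(y^n) = f_\Delta(y_s)-\bar f_\Delta(y_s)$ — i.e. in comparing $y_s$ to $y^n$, which costs $|y_s - y^n|\le |f_\Delta(y^n)|\Delta_t + \sigma|W_s-W_{t_n}|$, and here one needs the global Lipschitz constant $\pi^2 C_0\Delta^{-2k}$ of $f_\Delta$ together with moment bounds on $y^n$.

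The core estimate I would carry out is an Itô/energy argument: set $e_t = Y_t - y_t$, compute $d|e_t|^2 = 2 e_t\big(f(Y_t)-\bar f_\Delta(y_t)\big)\,dt$ (no martingale term, since the noise is additive and cancels), insert the three-way split, use the dissipativity term $2e_t(f_\Delta(Y_t)-f_\Delta(y_t))\le -2C_0|e_t|^2$ to absorb a copy of $|e_t|^2$, bound the cross terms with Young's inequality (e.g. $2e_t\cdot r \le C_0|e_t|^2 + C_0^{-1}|r|^2$), and obtain $\frac{d}{dt}\mathbb E|e_t|^2 \le -C_0\,\mathbb E|e_t|^2 + C\big(\Delta^{\kappa_1} + \mathbb E|f_\Delta(y_s)-f_\Delta(y^n)|^2\big)$. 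For the last term I would use $|f_\Delta(y_s)-f_\Delta(y^n)|^2 \le (\pi^2C_0\Delta^{-2k})^2\,|y_s-y^n|^2$ and then $\mathbb E|y_s-y^n|^2 \le 2\Delta_t^2\,\mathbb E|f_\Delta(y^n)|^2 + 2\sigma^2\Delta_t$. The growth of $|f_\Delta|$ is at most $O(\Delta^{-k})$ pointwise on its support (it is the truncation of $f$ at distance $\Delta^k$ from the boundary, where $f\sim \Delta^{-k}$), so $\mathbb E|f_\Delta(y^n)|^2 = O(\Delta^{-2k})$, giving a one-step contribution of order $\Delta^{-4k}\cdot(\Delta_t^2\Delta^{-2k} + \Delta_t) = O(\Delta^{-6k}\Delta_t^2 + \Delta^{-4k}\Delta_t)$; with $\Delta=\Delta_t$ this is $O(\Delta^{2-6k}+\Delta^{1-4k})$, and the exponent budget in the two cases of the table is exactly chosen so that $\kappa_1 = 2k(p-1) \le 1-4k$ (Case 2) or $\ge 1$ (Case 1, where $k=1/4$ makes $1-4k=0$ marginal and one must instead exploit $\nu\le 13/4$ to push $\kappa_1$ past the threshold — this is where the case distinction earns its keep). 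Finally Grönwall turns $\frac{d}{dt}\mathbb E|e_t|^2 \le -C_0\mathbb E|e_t|^2 + C_{T,p}\Delta^{\bar r}$ with $e_0=0$ into $\sup_{t\le T}\mathbb E|e_t|^2\le C_{T,p}\Delta^{\bar r}$, and the bound for $\Psi_t-\psi_t$ follows by the local Lipschitz continuity of $F^{-1}(y)=\eta\sin^2(y/2)$ (globally Lipschitz on $\mathbb R$ with constant $\eta/2$, since $|\tfrac{d}{dy}\sin^2(y/2)| = |\tfrac12\sin y|\le\tfrac12$), so $\mathbb E|\Psi_t-\psi_t|^2 \le (\eta/2)^2\,\mathbb E|Y_t-y_t|^2$.

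I would rely freely on the results already stated: Proposition \ref{prop:boundedness_SDE} (so that $\Psi_t\in(0,\eta)$, $Y_t\in(0,\pi)$, and the inverse-moment bounds \eqref{eq:bound_inverse_moments} hold, which is what makes $\mathbb E[|f(Y_t)|^q]<\infty$ for $q<\nu$ — this is why $\nu>3$ is imposed, one needs the true drift in $L^2$-type spaces along the true solution), the properties of $f_\Delta$ (Lipschitz constant $\pi^2C_0\Delta^{-2k}$, one-sided Lipschitz constant $-C_0$, and the truncation bound $\sup_t\mathbb E|f(Y_t)-f_\Delta(Y_t)|^2\le C_{T,p}\Delta^{\kappa_1}$), and Proposition \ref{prop:properties_f_post_Lamperti} for $C_0>0$. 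One preliminary I would need but which is not stated explicitly is a uniform-in-$\Delta$ moment bound $\sup_n\mathbb E|y^n|^2\le C$ (indeed $\sup_n\mathbb E|\psi^n|\le\eta$ is automatic since $\psi^n=\eta\sin^2(y^n/2)\in[0,\eta]$, but one wants $\mathbb E|y^n|^2$, which follows from $y^n\in$ bounded interval only if the scheme stays near $[0,\pi]$ — more carefully, from the one-sided Lipschitz/dissipativity of $f_\Delta$ via a discrete Grönwall argument, exactly as in \cite{2023_chen_LSST}); I would cite that reference for it.

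The main obstacle is the bookkeeping of exponents: one must track how the negative powers of $\Delta$ coming from the exploding Lipschitz constant $\Delta^{-2k}$ and the exploding pointwise size $\Delta^{-k}$ of $f_\Delta$ near the boundary race against the positive power $\kappa_1=2k(p-1)$ from the truncation error and the $\Delta_t$, $\Delta_t^{1/2}$ gains from one time step, and verify that the two parameter regimes in the table are precisely those for which the net exponent is $\bar r$. Subtleties: (i) whether to measure the one-step error in $L^2$ or $L^p$ — using higher moments of $Y_t$ and $y^n$ (available because $\nu>3$) lets one afford the $p$ in $\kappa_1$ larger, which is the whole point of the range $p\in(\sqrt{13\nu}/2,\nu)$ in Case 2; (ii) the borderline Case 1 where $k=1/4$ is tight and one genuinely needs $\nu\le 13/4$ so that $p$ can be taken close enough to $\nu$ to make $\kappa_1\ge 1$. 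Since Proposition \ref{prop:convergence_SDE} is quoted from \cite{2023_chen_LSST}, the honest proof is essentially a citation plus a sketch of this energy estimate; I would present the Itô-energy computation and the Grönwall step in detail and defer the exponent arithmetic and the discrete moment bound to the reference.
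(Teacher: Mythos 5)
The paper contains no proof of this proposition: it is imported from \cite{2023_chen_LSST}, with the surrounding text saying only that the result ``is proven in \cite{2023_chen_LSST}'', so there is no in-paper argument to compare against. Your citation-plus-sketch --- the additive-noise error equation $Y_t-y_t=\int_0^t(f(Y_s)-\bar f_\Delta(y_s))\,ds$, the three-way splitting into truncation error, dissipative term, and one-step term, absorption via the one-sided Lipschitz constant $-C_0$, Gr\"onwall, and the Lipschitz anti-transform $F^{-1}$ for the $\Psi$-estimate --- is the standard route taken in the cited source and is structurally sound, so your proposal is consistent with (indeed more informative than) what the paper provides, the only soft spot being the exponent bookkeeping in your parenthetical (e.g.\ the claimed inequality $\kappa_1\le 1-4k$ cannot hold in Case 2 for large $\nu$, where $\kappa_1$ approaches $2$ while $1-4k<1$), which you in any case explicitly defer to \cite{2023_chen_LSST}.
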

\begin{remark}\label{remark:concergence_order2}
    Note that whenever $\nu>{11}/2$, then $\bar{r}=2$;  hence, the order of convergence is always one.
\end{remark}

Let us consider the Pearson process $\Psi=\{\Psi_t\}_{t\in[0,1]}$ in the range $[0,\eta]=[0,1.5],$ with mean reverting level $\gamma=1$ and rate $\alpha=7$.  In the LSST scheme \eqref{eq:LSTT_scheme} we take $\Delta_t=2^{-15}$ and $k=0.22$. 
We consider two experiments with diffusion coefficients $\sigma_1=0.25$ and $\sigma_3=1$, so  that $\nu=\nu_1=74.67$ and $\nu=\nu_2=4.67$, respectively.  Hence,  for the first parameters configuration, we get that the condition $\nu=\nu_1>{11}/2=5.5$ of Remark \ref{remark:concergence_order2} is satisfied; we expect strong convergence of order one in the sense of \eqref{eq:second_order_error}. In the second case, we have $\bar{r}\in\left(1.27,1.6 \right)$, so we expect an order of convergence in $\left(0.637,0.80\right)$.
We have performed numerical experiments  and the approximation errors are tested both  in terms of the $L^2(\Omega)$-norm at  the final time $T=1$, i.e.
 \begin{equation}\label{eq:second_order_error}
     e_{2,T,\Delta}=(\mathbb{E} [|Y_T - y_T|^2])^{\frac{1}{2}},
 \end{equation}  
 and in terms of the  time uniform  $L^2(\Omega)$ norm along the discretization partition 
\begin{equation}
\label{eq:second_order_error_uniform}
     \epsilon_{2,\Delta}=\sup_{t_n\in \Xi_t}(\mathbb{E} [|Y_{t_n} - y_{t_n}|^2])^{\frac{1}{2}},
 \end{equation} 
 where $\Xi_t$ is the time mesh in \eqref{eq:time_partition}.
The \emph{true} solution $Y_T$ is identified as the LSST approximation with the finer step-size $\delta=2^{-15}$. 
 We define  by $(y_{j}^n)_{n=0,...,T/{\Delta_t}}$   the discrete $j$-th trajectory out of $N\in \mathbb N$ given  by \eqref{eq:LSTT_scheme}, for any  larger $\Delta_t  \in \delta\cdot [2^{4}, 2^{5}, 2^{6}, 2^{7}, 2^{8}]$.   In particular, we use trajectory samples of dimension $N=10000$ and estimate for any $\Delta_t$ the error \eqref{eq:second_order_error} by means of the sample mean
 \begin{equation}
     \label{eq:second_order_error_estimate_final_time}
     \hat{e}_{{2,T,\Delta}}^2=\frac{1}{N}\sum_{k=1}^N\left|Y_T - y_{j}^{T/\Delta} \right|^2,
 \end{equation}
 while the time uniform error \eqref{eq:second_order_error_uniform} is estimated by 
 \begin{equation} 
     \label{eq:second_order_error_estimate}
   \hat{\epsilon}_{2,\Delta}^2=\sup_{t_n\in \Xi_t}\frac{1}{N}\sum_{j=1}^N\left|   Y_{t_n} - y_{j}^{n}\right|^2.
 \end{equation} 
   
By assuming a power law  for the error $e$, i.e $e  = C \Delta ^q$, where $q=\bar{r}/2$, where $\bar{r}$ is introduced in Proposition   \eqref{prop:LSTT_convergence},  since $\log e = \log C + q\log\Delta_t$ we estimate the order of strong convergence  $q=\bar{r}/2$ 
by means of a least squares fit, given the error sample $\left(\Delta,\hat{e}\right)_\Delta$.  
For $\hat{e}= \hat{e}_{{2,T,\Delta}}$  the estimates of $q$ are $1.016$ and $ 1.022$ for  $\sigma_1=0.25$ and $\sigma_2=1$, respectively. Experimental estimates are shown in Table \ref{table:SDE_error_T}, and Figure \ref{fig:combined_error} depitches them. Precisely, 
 Figure \ref{fig:combined_error} (first row) shows the points $ \left(\Delta,\hat{e}_{{2,T,\Delta}}\right)_\Delta$ in the log-log  scale for (a) $\sigma_1=0.25$ and (b) $\sigma_2=1$; for $\hat{e}= \hat{e}_{{2,\Delta}}$  the estimates of $q$ are  $0.97$ and $0.51$, respectively.  Figure \ref{fig:combined_error} (second row) shows the points $ \left(\Delta,\hat{e}_{{2,\Delta}}\right)_\Delta$ in the log-log  scale for (c) $\sigma_1=0.25$ and (d) $\sigma_2=1$. Hence, evidence of first order strong convergence of  LSST scheme is experimentally proven.    \begin{figure}[h!]
    \centering
    \begin{subfigure}[b]{0.4\textwidth}
        \centering
        \includegraphics[width=\textwidth]{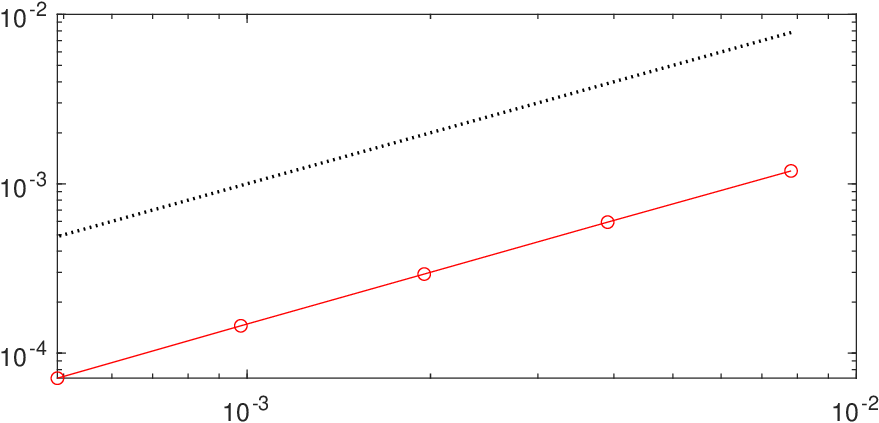}
        \caption{} 
    \end{subfigure} 
    \begin{subfigure}[b]{0.4\textwidth}
        \centering
        \includegraphics[width=\textwidth]{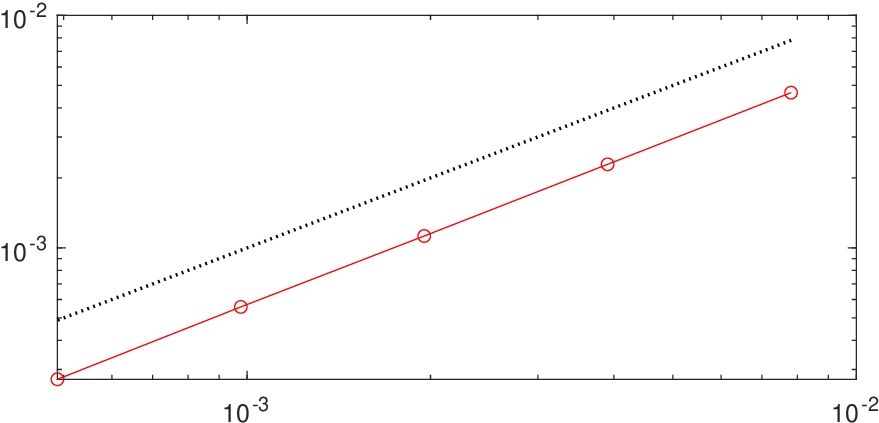}
        \caption{} 
\end{subfigure}
\begin{subfigure}[b]{0.4\textwidth}
        \centering
        \includegraphics[width=\textwidth]{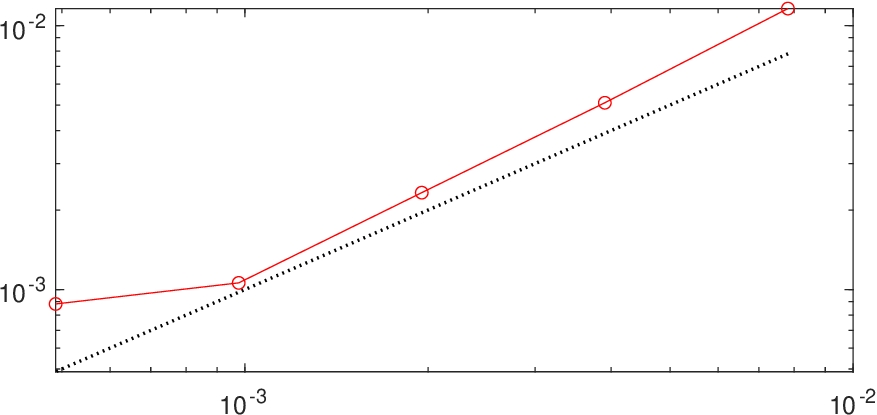}
        \caption{ } 
    \end{subfigure}
    \begin{subfigure}[b]{0.4\textwidth}
        \centering
        \includegraphics[width=\textwidth]{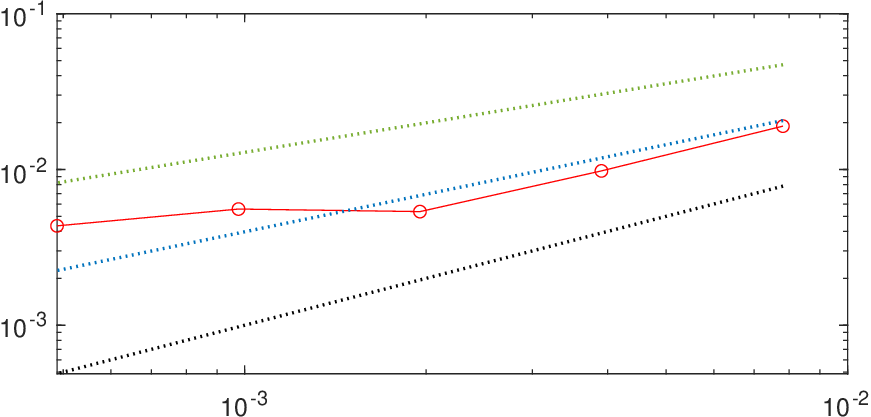}
        \caption{ }% 
\end{subfigure}%\label{fig:errore_uniforme_set2}
\caption{Log-log   of the estimations of the $L^2$-errors at time $T$ $\left(\Delta,\hat{e}_{{2,T,\Delta}}\right)_\Delta$  (solid line-first raw), and the uniform one $\left(\Delta,\hat{\epsilon}_{{2,\Delta}}\right)_\Delta$ (solid line-second row). Parameters are:  $\alpha=7,\gamma=1,\eta=1.5$ and different diffusion coefficients:   (a)-(c) $\sigma_1=0.25$; (b)-(d) $\sigma_3=1$. Dashed black line: slope reference 1; dashed blue line: slope reference 0.8; dashed green line: slop reference 0.6.}
\label{fig:combined_error}
\end{figure}

 \begin{table}[h!]
    \centering  
    \begin{subtable}{0.5\textwidth}
    \centering
        \begin{tabular}{|c|| c||c|}        
            \hline 
            & \multicolumn{1}{|c||}{$\sigma_1=0.25$} & \multicolumn{1}{|c|}{$\sigma_3=1$} \\
            \hline\hline
            ${\Delta}/{\delta}$ & $\hat{e}_{{2,T,\Delta}}$    & $\hat{e}_{{2,T,\Delta}}$ \\
            \hline
            $2^{8}$   & 1.19E-03 & 4.65E-03  \\
            $2^{7}$    &  5.94E-04&	2.29E-03 \\        
            $2^{6}$    &  2.93E-04&	1.13E-03 \\        
            $2^{5}$    &  1.45E-04&	5.57E-04 \\
            $2^{4}$   &   7,12E-05&	2.72E-04 \\
            \hline
        \end{tabular} \caption{}
         \end{subtable}
        \begin{subtable}{0.49\textwidth} \centering
          \begin{tabular}{|c|| c||c|}        
            \hline 
            & \multicolumn{1}{|c||}{$\sigma_1=0.25$} & \multicolumn{1}{|c|}{$\sigma_3=1$} \\
            \hline\hline
            ${\Delta}/{\delta}$ & $\hat{\epsilon}_{{2,\Delta}}$    & $\hat{\epsilon}_{{2,\Delta}}$ \\
            \hline
            $2^{8}$   &  1.16E-02&	1.90E-02   \\
            $2^{7}$    &  5.10E-03&	9.79E-03  \\        
            $2^{6}$    & 2.33E-03	&5.36E-03  \\        
            $2^{5}$   &    1.06E-03&	5.57E-03   \\
            $2^{4}$    &   8.82E-04&	4.35E-03  \\
      \hline
        \end{tabular}
        \caption{}
         \end{subtable}
          \caption{Convergence error estimates   for $\alpha=7,\gamma=1,\eta=1.5$ and different diffusion coefficients:    $\sigma_1=0.25$ and $\sigma_3=1$. (a) $L^2(\Omega)$  approximation error at time $T$: estimation \eqref{eq:second_order_error_estimate_final_time}. (b) Time uniform  $L^2(\Omega)$  approximation error: estimation \eqref{eq:second_order_error_estimate}. }\label{table:SDE_error_T}
    \end{table}

Even if conditions in Case 1 of Proposition \ref{prop:LSTT_convergence} are fulfilled, the convergence rate may be equal to one, since Proposition \ref{prop:LSTT_convergence}  gives only some sufficient conditions, \cite{2023_chen_LSST}. Indeed, we perform an experiment with the following  data: 
$\alpha=3.9,\gamma=0.9,\eta=1.5$ and  $\sigma_3=1$; this parameter configuration is such that $3<\nu<13/4,$ and thus, accordingly to Proposition \ref{prop:LSTT_convergence}, we would expect a convergence rate equal to 1/2 but actually from Figure \ref{fig:errore_log_uniforme_set3}-(a) we observe that the $L^2$ convergence rate at final time may be equal to 1, while the uniform error rate as shown in Figure \ref{fig:errore_log_uniforme_set3}-(b) is closed to 0.5.

\begin{figure}[h] 
\centering
\begin{subfigure}[b]{0.4\textwidth}
        \centering
        \includegraphics[width=\textwidth]{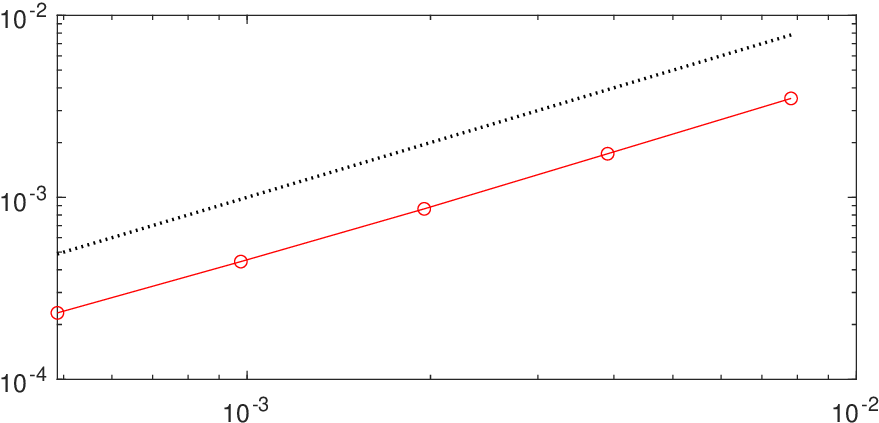}
        \caption{ }  
    \end{subfigure}
    \begin{subfigure}[b]{0.4\textwidth}
        \centering
        \includegraphics[width=\textwidth]{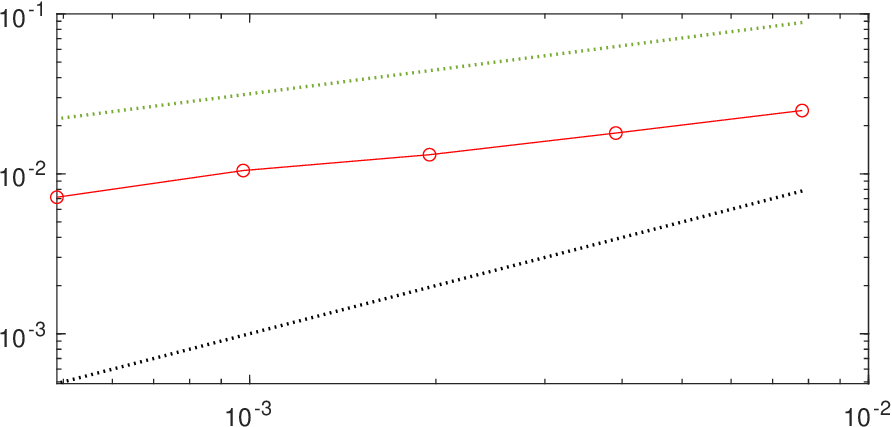}
        \caption{ }
\end{subfigure}
\caption{Log-log representation of the estimations of the $L^2$-errors at time $T$ $\left(\Delta,\hat{e}_{{2,T,\Delta}}\right)_\Delta$  (a), and the uniform one $\left(\Delta,\hat{\epsilon}_{{2,\Delta}}\right)_\Delta$ (b). Parameters are:  $\alpha=3.9,\gamma=0.9,\eta=1.5$ and  $\sigma_3=1$. Dashed black line: slope reference 1;  dashed green line: slop reference 0.5.}
\label{fig:errore_log_uniforme_set3}
\end{figure}

\bigskip

\begin{figure}[h!]
\begin{center}
\includegraphics[width=0.49\textwidth]{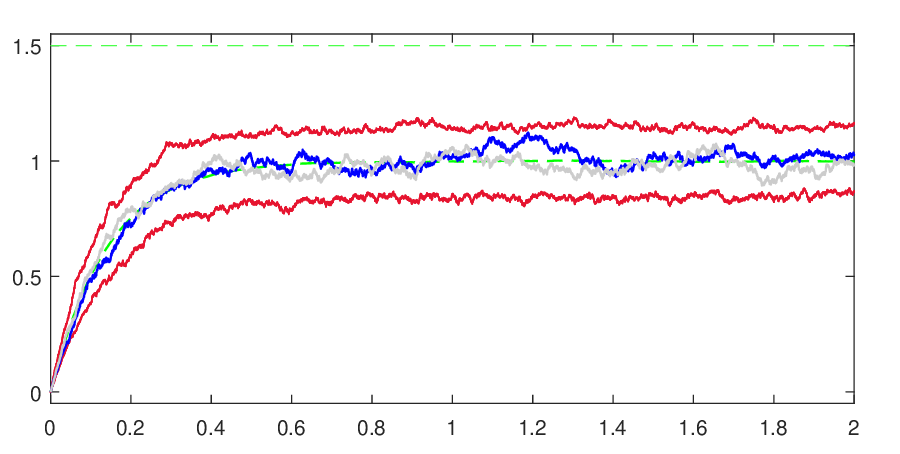}  \includegraphics[width=0.49\textwidth]{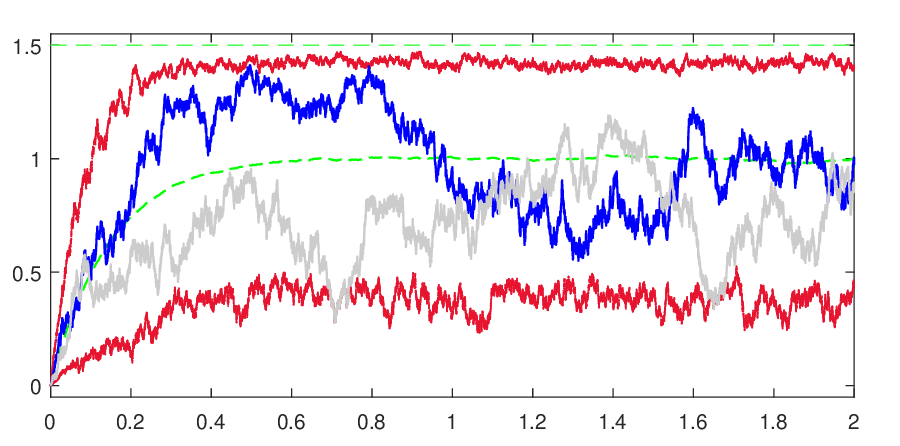} 
\end{center}
 {\hspace{2.9cm}(a) \hspace{5.3cm} (b)}
	\caption{Two random trajectories (filled line) kept within the max and the min of all the trajectories (dotted lines); the sampled mean process (dashed centered line) and boundary of the domain (lower and higher dashed lines) $[0,\eta]$ for $\alpha=7,\gamma=1,\eta=1.5$, $\Psi_0=0$ and different diffusion coefficients:   (a) $\sigma_1=0.25$; (b) $\sigma_3=1$. }	\label{fig:min_max_mean}
\end{figure}

 Figure  \ref{fig:min_max_mean} shows some random trajectories of the SDE solution process with small and large diffusion.  
Different values of the diffusion coefficient $\sigma$ are considered to evaluate how the variability of the noise intensity may influence the evolution of the sulphation process. Note that all the parameters satisfy the well-posedness condition \eqref{eq:condition_SDE_parameters}. 
Figure \ref{fig:min_max_mean}-(a) shows how in the small noise configuration ($\sigma_1=0.25$) the process moves around the mean level $\gamma=1$ with low excursions in the domain $(0,\eta)$. Indeed, the minimum and the maximum of the sample at each time are both very far from the boundaries of the domain. A high value of the mean reverting rate $\alpha=7$ together with a low value of the diffusion coefficient induce a lower variability.   For a higher value of the diffusion coefficient such as  $\sigma_3=1$,  Figure \ref{fig:min_max_mean}-(b) shows that the process may instead arrive close to the boundary even though it oscillates around $\gamma=1$ with a wider deviation in the excursion. The high mean reversion helps to return to the average.

\section{A PDE-ODE for modeling sulphation   with stochastic boundary condition}\label{sec:sulfation_stochastic_boundary}

Let us now consider the random model for the sulphation phenomenon, which involves the reaction between sulphur dioxide and calcium carbonate; it is given by the system \eqref{eq:modello_Natalini}-\eqref{eq:modello_Natalini_condition1}  coupled with the stochastic boundary condition \eqref{eq:modello_Natalini_stochastic_boundary} where $\Psi$ is the outdoor local sulphur dioxide density, solution of equation \eqref{eq:SDE_for_SO2}.

\medskip

As in   \cite{2005_GN_NLA,2007_GN_CPDE}, rather than working directly with the local density $\rho$, we work with the porous concentration  $s=\rho/\varphi$, where porosity $\varphi$ is assumed to be a linear function of  calcite density $c$, namely \begin{equation}\label{eq:def_porosity} \varphi(c)=\varphi_1 + \varphi_2 c,
\qquad  \varphi_1,-\varphi_2\in \mathbb R_+.
\end{equation}
We suppose $\varphi_2$ to be negative since the porosity of the material increases when calcite $c$ is transformed into gypsum, that is when $c$ decreases. System \eqref{eq:modello_Natalini} becomes, for $(t,x)\in [0,T]\times \mathbb R_+,$ and $\lambda \in \mathbb R_+$,
\begin{align}
\begin{aligned}\label{eq:PDE} 
\partial_t \left(\varphi(c) s \right) &= \partial_x(\varphi(c) \partial_x s) -\lambda \varphi(c) sc,\\
\partial_t c &= -\lambda \varphi(c) sc.
\end{aligned}
\end{align}
The initial conditions for $s$ and $c$ read
\begin{align}\label{eq:IC} 
s(0,x)=s_0=\frac{\rho_0}{\varphi(c_0)}, \quad c(0,x)=c_0.
\end{align}
The  boundary condition $\rho(t,0)=\Psi_t$ in \eqref{eq:modello_Natalini_stochastic_boundary} becomes
\begin{align}\label{eq:BC} 
s(t,0)=\widetilde{\Psi}_t:=\frac{\Psi_t}{\varphi(c(t,0))}.
\end{align}
From \eqref{eq:BC} and the second equation  in \eqref{eq:PDE}, we easily derive that the boundary condition at $x=0$ for the calcium carbonate is the solution of the following linear ODE
\begin{align*}
\partial_t c(t,0) = -\lambda \Psi_tc(t,0), \qquad c(0,0)=c_0.
\end{align*}
Hence, the stochastic boundary conditions at $x=0$ for $(s,c)$ are given by
\begin{equation}\label{eq:stochastic_boundary_condition_s_c}
\begin{split}
    c(t,0)&=c_0 \exp\left(-\lambda\int_0^t\Psi_u du\right),\\
    s(t,0)&=\widetilde{\Psi}_t = \frac{\Psi_t}{\varphi\left(c_0\exp(-\lambda\int_0^t\Psi_u du)\right)}.
\end{split}
\end{equation}  

\begin{proposition}
Let condition \eqref{eq:condition_SDE_parameters} be satisfied. Then the boundary function   \eqref{eq:stochastic_boundary_condition_s_c} are bounded for any $t\in [0,T].$ More precisely,
\begin{equation}\label{eq:stochastic_boundary_condition_s_c_limit}
\begin{split}
    c(t,0)&<c_0,\\
    0<s(t,0)=\widetilde{\Psi}_t&<\frac{\eta}{\varphi_1+c_0 \varphi_2}=:\widetilde{\eta}
\end{split}
\end{equation}  
\end{proposition}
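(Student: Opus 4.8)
The plan is to read both estimates off the explicit representations \eqref{eq:stochastic_boundary_condition_s_c}, using only the pathwise bound $\Psi_u\in(0,\eta)$ supplied by Proposition \ref{prop:boundedness_SDE} under the standing assumption \eqref{eq:condition_SDE_parameters}, together with the sign and monotonicity of the affine porosity $\varphi(c)=\varphi_1+\varphi_2 c$ from \eqref{eq:def_porosity}.

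First I would treat $c(t,0)$. Since \eqref{eq:condition_SDE_parameters} holds, Proposition \ref{prop:boundedness_SDE} guarantees $\Psi_u\in(0,\eta)$ almost surely for every $u\in(0,T]$; in particular $\Psi$ is pathwise bounded, so $\int_0^t\Psi_u\,du$ is finite, and it is strictly positive for $t>0$ because the integrand is positive. As $\lambda>0$, this gives $0<\exp\!\big(-\lambda\int_0^t\Psi_u\,du\big)<1$, hence $0<c(t,0)<c_0$ for every $t\in(0,T]$ (with the trivial equality $c(0,0)=c_0$ at the initial time).

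Next I would bound from below the denominator $\varphi(c(t,0))$ entering $\widetilde\Psi_t$. Because $\varphi_1>0$ and $\varphi_2<0$, the map $c\mapsto\varphi(c)$ is strictly decreasing, so from $0<c(t,0)<c_0$ we obtain $\varphi_1+\varphi_2 c_0=\varphi(c_0)<\varphi(c(t,0))<\varphi(0)=\varphi_1$. Under the admissibility condition on the initial calcite density that keeps the porosity positive at time zero --- equivalently $c_0<-\varphi_1/\varphi_2$, which is exactly what is needed for $\widetilde\eta=\eta/(\varphi_1+\varphi_2 c_0)$ to be a finite positive number and is already implicit in the initial datum $s_0=\rho_0/\varphi(c_0)$ of \eqref{eq:IC} --- the denominator stays strictly positive and, for $t>0$, strictly above $\varphi_1+\varphi_2 c_0$.

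Finally I would combine the two facts: since $0<\Psi_t<\eta$ and $\varphi(c(t,0))>\varphi_1+\varphi_2 c_0>0$, we get $0<\widetilde\Psi_t=\Psi_t/\varphi(c(t,0))<\eta/(\varphi_1+\varphi_2 c_0)=\widetilde\eta$ for all $t\in(0,T]$, which is precisely \eqref{eq:stochastic_boundary_condition_s_c_limit}. The argument is elementary; the only delicate point is the uniform lower bound on $\varphi(c(t,0))$, which rests on the monotone decay of $t\mapsto c(t,0)$ --- itself inherited from the positivity of $\Psi$ --- and on the positivity of the porosity at the initial time, rather than on any finer property of the SDE.
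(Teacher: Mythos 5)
Your proof is correct and follows exactly the route the paper intends: the paper's own proof is the single line ``The thesis easily derives from Proposition \ref{prop:boundedness_SDE}'', and your argument simply fills in the omitted details (positivity of $\int_0^t\Psi_u\,du$ gives $c(t,0)<c_0$, monotone decrease of $\varphi$ gives $\varphi(c(t,0))>\varphi(c_0)=\varphi_1+\varphi_2c_0>0$, and $0<\Psi_t<\eta$ then yields the bound on $\widetilde\Psi_t$). Your remarks on the need for $\varphi(c_0)>0$ and on the non-strict inequalities at $t=0$ are sensible clarifications rather than deviations from the paper's approach.
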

\begin{proof} The thesis easily derives from Proposition \ref{prop:boundedness_SDE}.
\end{proof}
The random oscillation of $s(t,0)$ corresponds to a poor regularity of the path of $\Psi$: precisely, $\Psi$, and then also $\widetilde{\Psi}$, has $\beta$-H\"older continuous paths, for every $\beta\in (0,1/2)$, but has no better regularity,\cite{2023_MMU}. From the analytical viewpoint, this low regularity is the main feature of our model.
In the deterministic case the well-posedness is achieved for $\widetilde{\Psi} \in W^{1,1}([0,T])$, \cite{2007_GN_CPDE}, while existence and uniqueness in the present case is proven in   \cite{2023_MMU}, as follows.  

\newpage
\begin{theorem}\label{teo:wellposedeness}
 Let conditions \eqref{eq:condition_SDE_parameters} on the parameters of $\Psi$ be satisfied. Then, the trajectories of $\Psi$ are almost surely in   $C^{\beta}$,  with $\beta\in (1/4,\infty)$ and that $\widetilde{\Psi}(0)=0$. Then the system \eqref{eq:PDE}-\eqref{eq:BC}, has a unique solution in the class of $(c,s)\in \left(L^\infty([0,T]\times (0,\infty))\right. $ $\left.\times L^\infty([0,T];W^{1,2}(0,\infty))\right)$.
\end{theorem}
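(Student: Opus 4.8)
The plan is to reduce the coupled PDE--ODE system to a scalar semilinear parabolic equation for the porous concentration $s=\rho/\varphi(c)$, treated together with the chemistry variable $c$ as an $x$-parametrised ODE driven by $s$, and then to apply the splitting $s=u+v$ advertised in the introduction. I would work pathwise: for a fixed trajectory of $\Psi$ (hence of $\widetilde\Psi$), Proposition \ref{prop:boundedness_SDE} and the bounds \eqref{eq:stochastic_boundary_condition_s_c_limit} make $\widetilde\Psi$ a bounded $C^{\beta}$ function with $\widetilde\Psi(0)=0$. Expanding the first equation of \eqref{eq:PDE} by means of the second, $s$ solves $\partial_t s=\partial_x^2 s+\tfrac{\varphi_2}{\varphi(c)}\partial_x c\,\partial_x s-\lambda c(\varphi_2 s+1)s$ with $s(0,\cdot)=s_0$ and boundary datum $s(t,0)=\widetilde\Psi_t$; given any bounded nonnegative $s$, the $c$-equation integrates to $c=c_0\exp(-\lambda\int_0^t\varphi(c)s\,d\tau)$, so $c$ is nonincreasing in $t$, stays in $(0,c_0]$, and keeps $\varphi(c)\ge\varphi_1+\varphi_2 c_0>0$ as soon as $c_0<\varphi_1/|\varphi_2|$; thus the equation for $s$ is uniformly parabolic and its first-order coefficient is controlled. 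I would also record the elementary Lipschitz dependence of $c$ on $s$ in $L^\infty$.

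Next I would treat the linear piece $u$, the solution of the heat equation on $(0,\infty)$ with zero initial data and Dirichlet datum $\widetilde\Psi$. Since this Dirichlet problem is explicitly solvable, write $u$ through the boundary-layer kernel,
\[
u(t,x)=\int_0^t \frac{x}{2\sqrt{\pi}\,(t-\tau)^{3/2}}\,e^{-x^2/(4(t-\tau))}\,\widetilde\Psi_\tau\,d\tau ,
\]
and split $\widetilde\Psi_\tau=\widetilde\Psi_t+(\widetilde\Psi_\tau-\widetilde\Psi_t)$. The first term integrates to $\widetilde\Psi_t\,\mathrm{erfc}(x/2\sqrt t)$, bounded by $\|\widetilde\Psi\|_\infty$; the second is estimated using $|\widetilde\Psi_\tau-\widetilde\Psi_t|\le[\widetilde\Psi]_{C^{\beta}}|t-\tau|^{\beta}$ and Gaussian decay, yielding $\|u(t,\cdot)\|_{L^\infty}\lesssim\|\widetilde\Psi\|_\infty$ and, after differentiating the kernel in $x$ (which produces an extra $(t-\tau)^{-1}$-type weight tamed by $|t-\tau|^{\beta}$), the bound $\|\partial_x u(t,\cdot)\|_{L^2(0,\infty)}\lesssim[\widetilde\Psi]_{C^{\beta}}$ uniformly on $[0,T]$. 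This is exactly where the threshold $\beta>1/4$ is used — a $\beta$-Hölder datum vanishing at $0$ lifts to $L^\infty_t H^s_x$ for $s<2\beta+\tfrac12$, and $H^1$ needs $s>1$ — and this is, to my mind, the main technical obstacle of the whole proof and the one place where the poor regularity of the Pearson path is actually felt.

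Then I would set up a fixed point for the nonlinear piece $(v,c)$, still pathwise, on $X_{T_0}=C([0,T_0];L^\infty(0,\infty))\cap C([0,T_0];\dot H^1(0,\infty))$, with the constant $s_0$ absorbed by an elementary offset so that the half-line space is well defined. Define $\Phi\colon v\mapsto w$ by: set $s=u+v$, solve the ODE for $c$, and let $w$ solve the \emph{linear} parabolic problem $\partial_t w=\partial_x^2 w+\tfrac{\varphi_2}{\varphi(c)}\partial_x c\,\partial_x(u+v)-\lambda c(\varphi_2(u+v)+1)(u+v)$ with $w(t,0)=0$, $w(0,\cdot)=s_0$, written via Duhamel against the Dirichlet heat semigroup on $(0,\infty)$. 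Using parabolic smoothing, the $u$-bounds from the previous step, and the fact that the nonlinearity is polynomial hence locally Lipschitz on bounded sets, $\Phi$ maps a suitable ball of $X_{T_0}$ into itself and is a contraction for $T_0$ small depending only on the a priori size of the data; Banach's theorem then gives a local solution $(s,c)$.

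Finally, globalisation and uniqueness. A nonnegativity statement for $s$ follows from a maximum/comparison principle (initial and boundary data are nonnegative and the zeroth-order term $-\lambda c(\varphi_2 s+1)$ is bounded), which via the explicit formula for $c$ closes the bound $0\le c\le c_0$; an $L^\infty$ bound on $s$, hence on $v$, and an energy estimate for $\partial_x s$ — using $\varphi(c)\ge\varphi_1+\varphi_2 c_0>0$ and Young's inequality on the first-order term — produce bounds independent of $T_0$, so the local solution extends to all of $[0,T]$ and lies in the class $L^\infty([0,T]\times(0,\infty))\times L^\infty([0,T];W^{1,2}(0,\infty))$. For uniqueness I would subtract two solutions, use the Lipschitz dependence of $c$ on $s$ and of the nonlinearity on the a priori bounded set, and run a Grönwall inequality for $\|v_1-v_2\|_{L^2}^2+\int_0^t\|\partial_x(v_1-v_2)\|_{L^2}^2\,d\tau$. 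Since all constants above are uniform in $\omega$ in terms of $\|\widetilde\Psi\|_\infty$ and $[\widetilde\Psi]_{C^{\beta}}$, the pathwise construction yields the claimed $P$-almost sure well-posedness.
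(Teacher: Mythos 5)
Your proposal follows exactly the splitting strategy that the paper attributes to \cite{2023_MMU} and outlines in Section \ref{sec:sulfation_stochastic_boundary}: eliminate $c$ via the explicit formula \eqref{eq:c_formula}, decompose $s=u+v$ with $u$ the heat solution absorbing the irregular boundary datum $\widetilde\Psi$ (where the threshold $\beta>1/4$ yields $u\in L^\infty([0,T];W^{1,2}(0,\infty))$) and $v$ the nonlinear part with deterministic data, then close by fixed point and Gr\"onwall. This matches the paper's (deferred) proof in both structure and the key technical point, so no further comparison is needed.
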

Theorem \ref{teo:wellposedeness} is proven through a splitting argument. Here we state the main construction of the proof since we take advantage of the same splitting strategy to numerically sample the random bivariate process $(c,s)$ in $[0,T]\times\mathbb R_+$.
The random  function $c$ may be found as an explicit solution of the second equation in \eqref{eq:PDE} in terms of $s$, that is
\begin{align}\label{eq:c_formula}
c(t,x) = \frac{\varphi_1 c_0 }{\varphi(c_0 )e^{\lambda\varphi_1\int_0^t s(r,x)dr}- \varphi_2 c_0 },
\end{align}
so that the first equation of system \eqref{eq:PDE}   becomes
\begin{align*}%\label{eq:s}
\partial_t s = \partial_x^2 s +\frac{\varphi_2}{\varphi(c)} \partial_x c\partial_x s -\left(\frac{\varphi_2}{\varphi(c)}\partial_t c+\lambda c\right) s.
\end{align*}
The key point, which is a splitting argument, is to decompose $s$ as the sum
\begin{equation}\label{eq:def_splitted_s}
  s=u+v.  
\end{equation}

\noindent{\bf The independent random heat equation.} The process $u$  in \eqref{eq:def_splitted_s} is the solution of the heat equation with zero initial condition and boundary condition $\widetilde{\Psi}$
\begin{equation*}%\label{eq:u}
\begin{split}
\partial_t u& = \partial_x^2 u,\\  
u(0,x)&=0,\\
u(t,0)&=\widetilde{\Psi}_t.  
\end{split}
\end{equation*}
The function $u$ neglects the reaction and then the nonlinearity, and it handles the randomness and the consequent low regularity of the system. The heat equation is very well understood and a highly oscillating boundary condition becomes smoother in the indoor environment, due to diffusion. We can then use this smoothing effect to study the full system which includes a reaction component.   Indeed, $u\in C^\infty$  in the interior $(t,x)\in (0,T]\times (0,\infty)$ and for $\beta\in (1/4,\infty)$, $u\in L^\infty([0,T];W^{1,2}(0,\infty))$, in particular it is weakly differentiable in $x$; this additional regularity plays an important role in the study of $v$.

\noindent{\bf The random non linear dynamics with deterministic boundary condition.}
The process $v$  in \eqref{eq:def_splitted_s} is the solution of the following equation, defined  for any $(t,x) \in [0,T]\times [0,\infty)$
\begin{equation}\label{eq:v} \begin{split}
\partial_t v & = \partial_x^2 v +\frac{\varphi_2}{\varphi(c)} \partial_x c (\partial_x v +\partial_x u) -\left(\frac{\varphi_2 \partial_t c}{\varphi(c)}+\lambda c\right) (v+u),\\  
v(0,x)&=s_0,\\
v(t,0)&=0, 
\end{split}
\end{equation}
where $c$ is explicitly given by \eqref{eq:c_formula} in terms of $s=u+v$. Thanks to the regularity of $u$  and $\partial_x u$,   existence and uniqueness for $v$ and hence for $s$ is proven in   \cite{2023_MMU}. A nice presentation may be found in    \cite{2024_MACH2023_PDE}.

\section{Discrete approximation  of the random PDE-ODE system}\label{sec:discrete_approximantion}
  
  We introduce a discrete version of the system \eqref{eq:modello_Natalini}-\eqref{eq:SDE_for_SO2} in $[0,T]\times \left[0,\overline{x}\right]$  with $\overline{x}\in \mathbb R_+$ which uses  the splitting strategy discussed in Section \ref{sec:sulfation_stochastic_boundary}.  Let us consider an equi-partition of the space and time intervals with space time meshes  $$\Xi_x \times \Xi_t = \left[0=x_0, x_1, \dots, x_M= \overline{x}\right]\times [0=t_0, t_1, \dots, t_N=T].$$ From now on, we consider the following notations 
  \begin{equation*}%\label{eq:delta}
    \Delta_x = \frac{\overline{x}}{M}\, , \quad \Delta_t=\frac{T}{N}, \quad  { \bar{\Delta}} = \frac{\Delta_t}{\Delta_x ^2}, 
\end{equation*}
with $N\in \mathbb N$, such that $\Delta_t <\Delta^*$, and $\Delta^*$ is as in \eqref{eq:def_delta_star_SDE} and Proposition \ref{prop:convergence_SDE}.
 We adopt the following notation for a general function $g$ evaluated at the mesh nodes $g_m^n=g(x_m,t_n)$, for any $(m,n)\in \{0,\dots,M\}\times\{0,\ldots,N\}.$ 
Since at the simulation level we consider a bounded domain $\left[0,\overline{x}\right]$, we need to take into account a boundary conditions at $\overline{x}$ for function $s=u+v$:  we take the following  Neumann condition for $u$ and $v$ for the right boundary:
\begin{equation}\label{eq:neumann_condition_continuous}\partial_x u(t,\overline{x})=\partial_x v(t,\overline{x})=0.\end{equation}
 We consider the discrete version of the splitted solution \eqref{eq:def_splitted_s} of system \eqref{eq:PDE}, that is for any $(m,n)\in\{1,...,M\}\times\{1,...,N\}$
$$
s^n_m=u^n_m+v^n_m.
$$
According to the theoretical results obtained in   \cite{2023_MMU} it pathwise solves the discrete SDE-PDE system. 

\subsection{The discrete random heat equation: definition and stability.} 
We solve the random PDE system pathwise, i.e. for any given realization $\{\tilde\psi^n\}_{n}(\omega), \omega\in \Omega_P$ of the discrete process 
\begin{equation}\label{eq:psi_tilde_discrete}
    \widetilde\psi^n = \frac{\psi^n}{\varphi\left(c_0\exp(-\lambda\int_0^{t_n}\Psi_u du)\right)}.
\end{equation}
First, we consider a forward time, centered space (FTCS)   approximation  $\{u^n_m\}_{n,m}$  of the heat equation, for $(n,m)\in\{0,\ldots,N-1\}\times\{1,\ldots,M\}$ $$
	\frac{u_m^{n+1}-u_m^n}{\Delta_t}   = \frac{u_{m+1}^n - 2 u_m^n + u_{m-1}^n}{\Delta_x^2}.
 $$  
A slight improvement in computational efficiency
can be obtained with a small rearrangement of the discrete system, by considering that the right hand side Neumann boundary condition \eqref{eq:neumann_condition_continuous} leads to  $u_{M+1}^n=u_{M-1}^n$, for any $n$. Therefore,   for any $n=0,1,\dots,N-1$ 
 we consider the following complete numerical scheme
\begin{equation}\label{eq:heat_scheme}
\begin{split}
	u_{m}^{n+1}                      & =  \bar{\Delta}\, u_{m+1}^n + (1- 2\bar{\Delta}) u_m^n +  \bar{\Delta}\,u_{m-1}^n, \quad m=1,\dots,M-1;  \\
 u_{0}^{n+1} &=\widetilde{\psi}^{n+1}, \quad 
 u_{M}^{n+1} =(1- 2\bar{\Delta}) u_M^n + 2 \bar{\Delta}\,u_{M-1}^n; \\
  u_0^0  &= \widetilde{\psi}^0 \quad u_{m}^{0}                      =  0, \quad m=1,\dots,M.
\end{split}
\end{equation} 
The discrete bounded process $\left(\widetilde{\psi}^n\right)_n$ in \eqref{eq:psi_tilde_discrete} is given by the transform \eqref{eq:stochastic_boundary_condition_s_c} of the process solution of the LSST scheme \eqref{eq:LSTT_scheme}.
System \eqref{eq:heat_scheme} is equivalent to solve  for any $n=0,\dots, N-1$ the following  linear system for the internal nodes
\begin{equation}\label{eq:heat_matrix_scheme}
    U^{n+1} = A U^n +   \bar{\Delta}\, \widetilde{U}_0^{n},
\end{equation}
where  $U^n=(u^n_1,...,u^n_{M})^T \in \mathbb R^{M}$ for any $n=0,\dots, N-1$,  the matrix $A\in \mathbb R^{M}\times \mathbb R^{M}$ is given by
\begin{equation*}%\label{eq:def_matrix_A}
    A = 
    \begin{bmatrix}
         1- 2\bar{\Delta} &    \bar\Delta   &   0                           & 0     & 0 & \dots & 0\\
         \bar{\Delta}       & 1- 2\bar{ \bar{\Delta}}  &     \bar{\Delta}   & 0    & 0 & \dots & 0\\
       0                               &   \bar{\Delta} & 1-2  \bar{\Delta}      &  \bar{\Delta}  & 0&  \dots & 0\\
        \vdots                               & \vdots & \vdots     &\vdots  & \vdots&  \vdots & \vdots\\
        0     & \ldots                          &0 & 0&   \bar{\Delta}  &1-2   \bar{\Delta} & \bar{\Delta}\\
0& \ldots& 0 &0&0& 2 \bar{\Delta} & 1- 2 \bar{\Delta}.
    \end{bmatrix} 
\end{equation*}
Equation \eqref{eq:heat_matrix_scheme} is associated to the   boundary condition $\widetilde{U}_0^n$ at   $x=0$, coupled with the second rescaling in  \eqref{eq:stochastic_boundary_condition_s_c}, and the $t=0$ initial condition $U^0$
\begin{equation}\label{eq:boundary_initial_condition_U} 
    \begin{split}
        \widetilde{U}_0^n&= \left( \widetilde{\psi}^n,0,\ldots,0,0\right)^T;\\
        U^0&=(0,\ldots,0)^T.
    \end{split}
\end{equation}  
By an iteration of \eqref{eq:heat_matrix_scheme}, we get that a solution of \eqref{eq:heat_matrix_scheme}- \eqref{eq:boundary_initial_condition_U} is, for any $n\in\{1,...,N\}$
 \begin{equation}\label{eq:heat_matrix_scheme_solution}
    U^{n} = A^n U^0 +  \bar{\Delta} \sum_{j=0}^{n} A^{n-j}\, \widetilde{U}_0^{j}.
\end{equation}
Mean-square stability of the solution to a random partial differential equation refers to the property that the expected value of the square of the solution remains limited in the amount of growth that can occur in a finite time. We first have a result of pathwise stability.
Let us consider the Euclidean norm in space for $U^n$ and $\widetilde{U}_0$, respectively
\begin{equation}
   \label{def:euclidean_discrete} 
   \|U^n \|_{\Delta_x} = \left(\Delta_x \sum_{m=0}^{M}\left|u_m^n\right|^2\right)^{1/2}, \quad   \|\widetilde{U}_0 \|_{N,\Delta_t} = \left(\Delta_t \sum_{j=0}^{N}\left|\widetilde{U}_0^j\right|^2\right)^{1/2},
\end{equation}

\medskip

A stability notion, \cite{2004_Strikwerda}, for the initial-boundary value problem \eqref{eq:heat_matrix_scheme} is provided in the following.
\begin{definition}
   The discrete scheme \eqref{eq:heat_matrix_scheme} is $L^2$-pathwise stable if, for any $n$ such that $0\le n\Delta_t \le T$, and for any $\omega\in \Omega$, there exists a positive constant $C_T$ such that
   \begin{equation*}%\label{def:stability_U}
   \|U^n (\omega)\|_{\Delta_x}\le C_T \left( \|U^0 \|_{\Delta_x} + \|\widetilde{U}_0  (\omega)\|_{n,\Delta_t}\right).
   \end{equation*} Clearly, if the scheme is $L^2$-pathwise stable, it is also -$L^2$stable in mean. Indeed, following the classical stability results in the deterministic case, the following results may be stated.
\end{definition}
\begin{proposition}
\label{prop:stability_u}
Suppose that assumption \eqref{eq:condition_SDE_parameters} on the parameters of the stochastic boundary condition holds. Then for \begin{equation}
    \bar{\Delta} \le 1/2    \label{eq:first_stability_condition_Delta}
\end{equation}  the random process, solution of \eqref{eq:heat_matrix_scheme}- \eqref{eq:boundary_initial_condition_U} is pathwise stable; precisely, there exists a constant $C$ such that, for any $n\in \{1,\ldots, N\}$
\begin{equation*}%\label{eq:prop_stability_U_Deltax}
\|U^{n}(\omega) \|_{\Delta_x}   \le C_{T,\Delta_x} \widetilde\eta, 
\end{equation*} where $\widetilde\eta$   is defined in \eqref{eq:stochastic_boundary_condition_s_c_limit}.
Moreover, the mean square stability holds. Furthermore, the stability property holds also in the maximum norm case
both pathwise and in mean 
\begin{equation}\label{eq:prop_stability_U_supnorm}
\mathbb E\left[\|U^{n}  \|_{\max}\right]   \le C_{T} \widetilde\eta.
\end{equation}
\end{proposition}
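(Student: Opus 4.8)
The plan is to base the whole argument on a \emph{pathwise discrete maximum principle} for the heat scheme \eqref{eq:heat_scheme}, valid under the CFL--type restriction \eqref{eq:first_stability_condition_Delta}, and then to read off all the claimed ($\ell^2$ and $\ell^\infty$, pathwise and in mean) bounds as immediate consequences. The key observation is that, when $\bar\Delta\le 1/2$, every line of the update \eqref{eq:heat_scheme} is a convex combination: for $1\le m\le M-1$ the value $u_m^{n+1}=\bar\Delta u_{m+1}^n+(1-2\bar\Delta)u_m^n+\bar\Delta u_{m-1}^n$ is a convex combination of the three neighbours at level $n$; for the reflected right boundary, $u_M^{n+1}=(1-2\bar\Delta)u_M^n+2\bar\Delta u_{M-1}^n$ is a convex combination of $u_M^n$ and $u_{M-1}^n$; and at the left boundary $u_0^{n+1}=\widetilde\psi^{\,n+1}$ is simply imposed. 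Hence, for every realisation $\omega$ and every $n$,
\[
\max_{0\le m\le M}|u_m^{n+1}|\;\le\;\max\Bigl(\max_{0\le m\le M}|u_m^n|,\ |\widetilde\psi^{\,n+1}|\Bigr).
\]

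First I would record the input bound. By the construction of the LSST scheme (together with Proposition~\ref{prop:boundedness_SDE}, which is where the parameter hypothesis \eqref{eq:condition_SDE_parameters} enters) one has $\psi^n(\omega)\in[0,\eta]$ for all $n$, and therefore, by \eqref{eq:psi_tilde_discrete}--\eqref{eq:stochastic_boundary_condition_s_c_limit}, $0\le\widetilde\psi^n(\omega)\le\widetilde\eta$. Iterating the maximum-principle inequality from the zero initial datum $U^0=0$ of \eqref{eq:boundary_initial_condition_U} yields
\[
\|U^n(\omega)\|_{\max}\;\le\;\max_{0\le j\le n}|\widetilde\psi^{\,j}(\omega)|\;\le\;\widetilde\eta,\qquad n=1,\dots,N,
\]
and taking expectations gives \eqref{eq:prop_stability_U_supnorm} with $C_T=1$. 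For the Euclidean norm \eqref{def:euclidean_discrete} one simply compares norms: $\|U^n\|_{\Delta_x}^2=\Delta_x\sum_{m=0}^M|u_m^n|^2\le(\overline x+\Delta_x)\,\|U^n\|_{\max}^2$, so $\|U^n(\omega)\|_{\Delta_x}\le\sqrt{\overline x+\Delta_x}\,\widetilde\eta=:C_{T,\Delta_x}\,\widetilde\eta$, which is the asserted pathwise bound; since moreover $\|U^0\|_{\Delta_x}=0$ and $\max_{0\le j\le n}|\widetilde\psi^{\,j}|\le\Delta_t^{-1/2}\,\|\widetilde U_0\|_{n,\Delta_t}$, the same chain shows the scheme is $L^2$-pathwise stable in the sense of the definition above. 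Finally, because each of these inequalities holds $\omega$-wise with a deterministic right-hand side, squaring and taking expectations gives at once mean-square stability, $\mathbb E\bigl[\,\|U^n\|_{\Delta_x}^2\,\bigr]\le C_{T,\Delta_x}^2\,\widetilde\eta^{\,2}$.

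I expect no substantive obstacle: the only delicate point is that the reflected right-boundary row of the iteration matrix $A$ is not symmetric (the Neumann condition produces the off-diagonal coefficient $2\bar\Delta$), so a direct von Neumann / spectral estimate in $\ell^2_{\Delta_x}$ would need a diagonal similarity to symmetrise $A$; routing the whole proof through the maximum principle avoids this, since under \eqref{eq:first_stability_condition_Delta} that row is still a genuine convex combination. An alternative would be to start from the closed form \eqref{eq:heat_matrix_scheme_solution}, bound $\|A^k\|$ in the weighted inner product that symmetrises $A$, and sum the Duhamel sum term by term; it works but yields a worse constant and more bookkeeping, so I would not pursue it.
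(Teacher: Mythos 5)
Your proof is correct, but it takes a genuinely different route from the paper. The paper works in the $\ell^2$ setting from the start: it writes the solution via the Duhamel-type formula \eqref{eq:heat_matrix_scheme_solution}, symmetrises the iteration matrix $A$ by the diagonal similarity $S=\mathrm{diag}(1,\dots,1,\sqrt{2})$ (exactly the non-symmetry issue you flag in your last paragraph), shows $\rho(\widetilde A)\le 1$ iff $\bar\Delta\le 1/2$, and then bounds the sum term by term against $\|\widetilde U_0\|_{n,\Delta_t}$; the maximum-norm statement \eqref{eq:prop_stability_U_supnorm} is only asserted to follow ``easily''. You instead run everything through the discrete maximum principle: under \eqref{eq:first_stability_condition_Delta} every row of the update, including the reflected Neumann row, is a convex combination, so $\|U^n(\omega)\|_{\max}\le\max_{0\le j\le n}|\widetilde\psi^{\,j}(\omega)|\le\widetilde\eta$ by induction from $U^0=0$, and the $\ell^2_{\Delta_x}$ and mean-square bounds follow by norm comparison and by taking expectations of a deterministic bound. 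This buys you the $\max$-norm estimate as the primary object rather than an afterthought, and a constant $\sqrt{\overline{x}+\Delta_x}$ that stays bounded as $\Delta_x\to 0$, whereas the paper's chain produces a factor $\bar\Delta^{1/2}\Delta_x^{-1/2}$. The one place where your argument is slightly weaker is the verification of the formal $L^2$-pathwise stability definition: converting $\max_j|\widetilde\psi^{\,j}|$ into $\Delta_t^{-1/2}\|\widetilde U_0\|_{n,\Delta_t}$ introduces a mesh-dependent constant in front of the boundary-data norm, so strictly speaking you establish the boundedness estimates of the proposition (which is what is actually claimed and used) rather than stability with a constant depending only on $T$; the paper's own constants are likewise mesh-dependent, so this is not a substantive gap.
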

\begin{proof} 

Let us observe that the matrix $A$ is similar to the matrix symmetric
$$
   \widetilde{A} = 
    \begin{bmatrix}
         1- 2\bar{\Delta} &    \Delta   &   0                           & 0     & 0 & \dots & 0\\
         \bar{\Delta}       & 1- 2\bar{ \bar {\Delta}}  &     \bar{\Delta}   & 0    & 0 & \dots & 0\\
       0                               &   \bar{\Delta} & 1-2  \bar{\Delta}      &  \bar{\Delta}  & 0&  \dots & 0\\
        \vdots                               & \vdots & \vdots     &\vdots  & \vdots&  \vdots & \vdots\\
        0     & \ldots                          &0 & 0&   \bar{\Delta}  &1-2   \bar{\Delta} & \sqrt{2} \bar{\Delta}\\
0& \ldots& 0 &0&0& \sqrt{2} \bar{\Delta} & 1- 2 \bar{\Delta}
    \end{bmatrix} ,
$$ i.e. $\widetilde{A}=S^{-1}A S,$ with $S=diag(1,1,..,1,\sqrt{2})\in \mathbb R^M.$
 Then 
$$
 \|A^n\|_2 =  \left\| \left(S^{-1}\widetilde{A} S\right)^n\right\|_2=\left\| S^{-1}\widetilde{A}^n S \right\|_2\le  \| S^{-1} \|  \| \widetilde{A}^n \| \| S\|_2\le \frac{1}{\sqrt{2}} \rho\left(\widetilde{A}\right)^n,  $$
where $\rho\left(\widetilde{A}\right)$ is the largest eigenvalue of the symmetric matrix $\widetilde{A}$ which is of the form
  $|1-4\bar{\Delta}\sin^2\left(\xi\right)|$, with $\xi\in(0,1),$ a function of $\Delta_x$, \cite{1995_thomas}. %Tomas Example3.1.10 pag117. 
  Since 
$ |1-4\bar{\Delta}\sin^2\left(\xi\right)|\le 1$ if and only if $\bar{\Delta}\le 1/2.$ 
Hence, by \eqref{eq:first_stability_condition_Delta} we get that, for any $n\in\{1,...,N\}$
$$\|A^n\|_2 \le 1.$$
Finally, by \eqref{eq:boundary_initial_condition_U} , \eqref{eq:heat_matrix_scheme_solution}, and \eqref{def:euclidean_discrete}  we get
\begin{equation*}
    \begin{split}
   \|U^{n+1}(\omega) \|_{\Delta_x} &\le  
   \| U^0(\omega) \|_{\Delta x} + \frac{\Delta_t}{\Delta_x^2}  \left\|  \sum_{j=0}^{n}  \widetilde{U}_0^{j}\right\|_{\Delta_x}\le  \frac{\Delta_t}{\Delta_x^2}\left(\Delta_x   \sum_{j=0}^{n}\left|\psi^n\right|^2\right)^{1/2}
   \\& \le  \frac{\Delta_t^{1/2}}{\Delta_x^{1+1/2}}\left(\Delta_t  \sum_{j=0}^{n}\left|\psi^n\right|^2\right)^{1/2}  = \frac{{\bar{\Delta}}^{1/2}}{\Delta_x^{1/2}}  \| \widetilde{\Psi} \|_{n,\Delta_t}\le C_{T,\Delta_x} \widetilde\eta,
    \end{split}
\end{equation*}
that is the thesis is proven. The mean case and \eqref{eq:prop_stability_U_supnorm} may be easily proven.    
\end{proof}
 
\subsection{The random non linear dynamics with deterministic boundary condition.}
Given the  solution  $\{u^n_m\}_{m,n}$ of the discrete heat equation \eqref{eq:heat_scheme} or equivalently, \eqref{eq:heat_matrix_scheme} -\eqref{eq:boundary_initial_condition_U} %with $C^\beta, \beta<1/2$ 
and boundary condition in $x=0$ given by $\{\widetilde\psi^n\}_{n}$,  we take the discrete version of  equation \eqref{eq:v} for $v$, coupled with the explicit solution for the calcite $c$ given by \eqref{eq:c_formula}, where $s$ is defined by \eqref{eq:def_splitted_s}. For any $m\in\{1,\ldots,M\}, n\in \{1,\ldots,N\},$ let $s_m^n = u_m^n + v_m^n$ and $c^n_m$ be the discrete version of $s$ and $c$, respectively.   Again, by using  an explicit first order finite difference approximation for \eqref{eq:v} we obtain the following forward system of equation for any $n=0,\ldots,N-1$ and $m=1,...,M-1$
\begin{eqnarray*}
    v_m^{n+1}  &=& v_m^n +  \bar{\Delta} \left(v_{m+1}^n - 2v_m^n + v_{m-1}^n\right)\notag \\&
   & +  \bar{\Delta}  \frac{\varphi_2}{4\varphi(c_m^n)} \left(c_{m+1}^n-c_{m-1}^n\right)\left(v_{m+1}^n-v_{m-1}^n\right) \notag\\
        &&  +  \bar{\Delta} \frac{\varphi_2}{4\varphi(c_m^n)} (c_{m+1}^n-c_{m-1}^n)(u_{m+1}^n-u_{m-1}^n)\notag \\
        & & + \Delta_t \lambda c_m^n  \left(   \varphi_2(v_m^n+u_m^n) -    1)(v_m^n+  u_m^n \right); \\
         c_m^{n+1} &= &c_m^n \,e^{-  \lambda\Delta_t \,(v_m^n+u_m^n)\, \varphi(c_m^n)},
\end{eqnarray*}
coupled with the discrete solution $u^n_m$ given by  \eqref{eq:heat_scheme}.
By considering the functions  
\begin{equation*}  %  \label{def:b^n(c)_h^n}
   \begin{split} b^n_m= b^n_m(c)&:=\frac{\varphi_2}{4\varphi(c_m^n)}(c_{m+1}^n-c_{m-1}^n)=\frac{\varphi{(c^n_{m+1})}-\varphi{(c^n_{m-1})}}{4\varphi(c_m^n)};\\
    h_m^n=h_m^n(c,u,v) &:= \lambda   \Delta_t  c_m^n \left( \varphi_2(v_m^n+u_m^n) -1\right),
    \end{split}
\end{equation*}
a small rearrangement leads to the following scheme for $(v_m^n,c_m^n)$
\begin{eqnarray}
 \label{eq:scheme_v_c_1}
       v_m^{n+1}  &=&  \bar{\Delta}  \left(1+b^n_m(c) \right)  v_{m+1}^n  +  \bar{\Delta} \left(1-b^n_m(c)\right) v_{m-1}^n   \\
    & &+  \left[1-2 \bar{\Delta} +   h_m^n(c,u,v) \right ]v_m^n \notag\\  
    & &+  \bar{\Delta} b^n_m(c) u_{m+1}^n + h_m^n(c,u,v) u_m^n-  \bar{\Delta} \,  b^n_m(c) u_{m-1}^n;\notag \\
    c_m^{n+1} &=& c_m^n \,e^{-  \lambda\Delta_t \,(v_m^n+u_m^n)\, \varphi(c_m^n)}, 
    \label{eq:scheme_v_c_2} 
    \end{eqnarray}
coupled with $u^n_m$ in \eqref{eq:heat_scheme}
and initial condition given by
\begin{equation}\label{eq:initial_condition_scheme_v_c} 
	 v_m^0 = \,\bar{s}_0\le \widetilde\eta, \quad    c_m^{0}=\bar{c}_0,  \quad  m\in \{1,\ldots,M-1\}. \\
\end{equation}
The boundary condition at the endpoint reads as $
   v_{M+1}^{n}= v_{M-1}^{n},$ so that, for any $n=1,\ldots, N$, $c_{M+1}^{n} = c_{M-1}^{n}$ and $ 
     b_M^n := \frac{B}{4\varphi(c_M^n)}(c_{M+1}^n-c_{M-1}^n)=0$. Therefore, this leads to the boundary equation for $v$, that is, for any $n=1,\ldots,N-1$
\begin{equation}    \label{eq:boundary_condition_scheme_v_c} 
    \begin{split}
      v_0^n&= 0;\\
      v_M^{n+1}&= 2\bar\Delta v_{M-1}^n + (1-2\bar\Delta + h_M^n)v_M^n +  h_M^n u_M^n.  
    \end{split}     
\end{equation}
We may express the system \eqref{eq:scheme_v_c_1}, \eqref{eq:initial_condition_scheme_v_c} and \eqref{eq:boundary_condition_scheme_v_c} in a matrix form. For any $n=0,\dots, N-1$, let $V^n$ be the vector $V^n=(v^n_1,...,v^n_{M})^T \in \mathbb R^{M},$   and let the matrices $G(n),P(n)\in \mathbb R^{M}\times \mathbb R^{M}$ be the following
\begin{align*}
 G(n) & = 
   \begin{bmatrix}
        1-2\bar{\Delta} + h_1^n & \bar{\Delta}(1 + b_1^n ) & 0 & \dots &  0 \\
        \bar{\Delta}(1-b_2^n)   & 1-2\bar{\Delta} + h_2^n  & \bar{\Delta}(1 + b_2^n ) & \dots & 0 \\
        \vdots & \vdots & \vdots & \vdots & \vdots \\
        0 &  \dots & \bar\Delta(1-b_{M-1}^n) & 1-2\bar\Delta + h_{M-1}^n & \bar\Delta(1+b_{M-1}^n) \\
        0   & \dots  & 0 & 2\bar{\Delta} & 1-2\bar{\Delta} + h_{M}^n
    \end{bmatrix};
    \\
   P(n) & = 
    \begin{bmatrix}
        h_1^n & \bar{\Delta} b_1^n  & 0 & \dots & \dots & 0 \\
        -\bar{\Delta}b_2^n   & h_2^n  & \bar{\Delta} b_2^n & \dots & \dots & 0 \\
        \vdots & \vdots & \vdots & \vdots & \vdots & \vdots\\
        0 & \dots & 0 & -\bar\Delta b_{M-1}^n & h_{M-1}^n & \bar\Delta b_{M-1}^n\\
        0  & \dots &\dots &  0   &  0   & h_{M}^n
    \end{bmatrix}.
\end{align*}
Then, for $n=0,1,\ldots N$, the system \eqref{eq:scheme_v_c_1}, \eqref{eq:initial_condition_scheme_v_c} and \eqref{eq:boundary_condition_scheme_v_c} reads as
\begin{align}\label{eq:v_system_matrix_form}
    V^{n+1} & = G^n\, V^n + P^n\, U^n + \bar\Delta \widetilde{{V}}^n,
\end{align}
where $\widetilde{{V}}^n  = \left(- b_1^n \tilde\psi^n, 0, \ldots, 0 \right)$.

\subsubsection{Boundedness and stability}

The first step of the analysis regards  boundedness result for  $(s^n_m, c_m^n) = (u^n_m+v^n_m, c_m^n)$, solution of the following system, easily obtained 
      from systems \eqref{eq:heat_scheme} and  \eqref{eq:scheme_v_c_1}-\eqref{eq:scheme_v_c_2}; for any 
      $ (n,m) \in \{ 0,\ldots, N-1 \}\times \{1,\ldots, M\}$, 
   \begin{eqnarray} 
  s_m^{n+1}  &= & \bar{\Delta}  \left(1+\frac{\varphi{(c^n_{m+1})}-\varphi{(c^n_{m-1})}}{4\varphi(c_m^n)}\right)  s_{m+1}^n +  \bar{\Delta} \left(1-\frac{\varphi{(c^n_{m+1})}-\varphi{(c^n_{m-1})}}{4\varphi(c_m^n)}\right) s_{m-1}^n \notag\\  \label{eq:scheme_s_c_1}
    && +  \left[1-2 \bar{\Delta} -  \lambda \Delta_t    c_m^n \left(1-\varphi_2 s^n_m\right)\right ] s_m^n;\\\label{eq:scheme_s_c_2}
    c_m^{n+1} &= &c_m^n \,e^{-  \lambda\Delta_t s_m^n\, \varphi(c_m^n)}.
\end{eqnarray}
  \begin{proposition}\label{prop:boundeness_scheme_s_c}
     Let us suppose that, given  the initial condition \eqref{eq:initial_condition_scheme_v_c} and the first stability condition  \eqref{eq:first_stability_condition_Delta},  the following further conditions are satisfied
     \begin{eqnarray}\label{eq:positive_condition_s_1}
        % 5\varphi(\bar{c}_0) &>&\varphi_1
        \bar{c}_0 &<& - \frac{4}{5}\frac{\varphi_1}{\varphi_2}= \frac{4}{5}\frac{\varphi_1}{\left|\varphi_2\right|};\\
\label{eq:positive_condition_s_2} \Delta_t&\le& \frac{\Delta_x^2}{  2  +  \lambda     \bar{c}_0 \Delta_x^2 (1-   \varphi_2  \widetilde\eta )  }.
     \end{eqnarray}
     Then,  the solution of the system \eqref{eq:scheme_s_c_1}-\eqref{eq:scheme_s_c_2} is such that, for any 
      $ (n,m) \in \{ 1,\ldots, N \}\times \{1,\ldots, M\}$, and any $\omega\in\Omega$
      $$(s^n_m(\omega), c_m^n(\omega)) \in \mathbb [0,\widetilde{\eta}) \times   [0,\bar{c}_0].$$
  \end{proposition}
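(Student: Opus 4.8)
The argument is by a double induction on the time index $n$, proving simultaneously that $0\le s_m^n<\widetilde\eta$ and $0\le c_m^n\le\bar c_0$ for all spatial nodes $m$. The base case $n=0$ holds by the initial condition \eqref{eq:initial_condition_scheme_v_c}, since $\bar s_0\le\widetilde\eta$ and $c_m^0=\bar c_0$, while the boundary node contributions at $x=0$ are controlled by Proposition on the boundedness of $\widetilde\psi^n$, giving $0<\widetilde\psi^n<\widetilde\eta$. Assume the claim holds at level $n$; I must propagate it to $n+1$ for the internal nodes via \eqref{eq:scheme_s_c_1}--\eqref{eq:scheme_s_c_2}, and separately for the two boundary nodes $m=0$ (trivial, equal to $\widetilde\psi^{n+1}$) and $m=M$ (using the reflected Neumann stencil).

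The calcite bound is the easy half: from \eqref{eq:scheme_s_c_2}, $c_m^{n+1}=c_m^n\exp(-\lambda\Delta_t s_m^n\varphi(c_m^n))$, and since $\lambda,\Delta_t>0$, $s_m^n\ge0$, and $\varphi(c_m^n)=\varphi_1+\varphi_2 c_m^n>0$ (which itself needs $c_m^n<-\varphi_1/\varphi_2$, implied by $c_m^n\le\bar c_0<\tfrac45\varphi_1/|\varphi_2|<-\varphi_1/\varphi_2$), the exponential factor lies in $(0,1]$; hence $0\le c_m^{n+1}\le c_m^n\le\bar c_0$. The positivity of $s_m^{n+1}$ is where the conditions \eqref{eq:positive_condition_s_1}--\eqref{eq:positive_condition_s_2} enter: I read \eqref{eq:scheme_s_c_1} as a linear combination of $s_{m+1}^n,s_m^n,s_{m-1}^n$ with coefficients $\bar\Delta(1\pm b_m^n)$ and $1-2\bar\Delta-\lambda\Delta_t c_m^n(1-\varphi_2 s_m^n)$, and I must show all three coefficients are nonnegative. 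For the off-diagonal ones this requires $|b_m^n|\le1$, i.e. $|\varphi(c_{m+1}^n)-\varphi(c_{m-1}^n)|\le4\varphi(c_m^n)$; using $\varphi(c)\in[\varphi_1+\varphi_2\bar c_0,\varphi_1]$ on $[0,\bar c_0]$, the numerator is at most $|\varphi_2|\bar c_0$ and the denominator at least $4(\varphi_1+\varphi_2\bar c_0)=4(\varphi_1-|\varphi_2|\bar c_0)$, so $|b_m^n|\le1$ reduces to $|\varphi_2|\bar c_0\le4(\varphi_1-|\varphi_2|\bar c_0)$, i.e. $5|\varphi_2|\bar c_0\le4\varphi_1$ — exactly \eqref{eq:positive_condition_s_1}. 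For the diagonal coefficient, since $s_m^n<\widetilde\eta$ we have $1-\varphi_2 s_m^n\le1-\varphi_2\widetilde\eta$ (recall $\varphi_2<0$, so $-\varphi_2 s_m^n>0$; here one bounds $1-\varphi_2 s_m^n \le 1-\varphi_2\widetilde\eta$ using $s_m^n<\widetilde\eta$ and $-\varphi_2>0$), hence $\lambda\Delta_t c_m^n(1-\varphi_2 s_m^n)\le\lambda\Delta_t\bar c_0(1-\varphi_2\widetilde\eta)$, and nonnegativity of $1-2\bar\Delta-\lambda\Delta_t\bar c_0(1-\varphi_2\widetilde\eta)$ is precisely \eqref{eq:positive_condition_s_2} after multiplying through by $\Delta_x^2$. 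With all three stencil weights nonnegative and $s_\bullet^n\ge0$ by induction, $s_m^{n+1}\ge0$ follows, and the $m=M$ boundary node is handled the same way with the coefficient $2\bar\Delta$ in place of $\bar\Delta(1\pm b_M^n)$ since $b_M^n=0$.

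For the upper bound $s_m^{n+1}<\widetilde\eta$, I exploit that the three stencil coefficients sum to $1-\lambda\Delta_t c_m^n(1-\varphi_2 s_m^n)$, which is $\le1$ because $c_m^n\ge0$ and $1-\varphi_2 s_m^n\ge1>0$; therefore $s_m^{n+1}$ is a sub-convex combination of values each $<\widetilde\eta$ (strictly, using the contribution of the $x=0$ boundary where $\widetilde\psi^n<\widetilde\eta$ strictly, or more simply noting the total weight is $\le1$ while each $s_\bullet^n<\widetilde\eta$), so $s_m^{n+1}\le(1-\lambda\Delta_t c_m^n(1-\varphi_2 s_m^n))\widetilde\eta<\widetilde\eta$ whenever the subtracted term is strictly positive, and $\le\widetilde\eta$ in general; a short argument at nodes where $c_m^n=0$ (so the weights sum exactly to $1$) shows $c_m^n$ stays $0$ thereafter and $s$ is governed by the heat part alone, which preserves the strict bound coming from the strictly-bounded boundary data. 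The main obstacle I anticipate is precisely this strictness at the upper boundary: one must be slightly careful that $s_m^{n+1}$ does not touch $\widetilde\eta$, and the cleanest route is to observe that the interior heat semigroup is strictly contractive away from the boundary and the boundary datum $\widetilde\psi^n$ is itself strictly below $\widetilde\eta$ for all $n$ by Proposition \ref{prop:boundedness_SDE}; the arithmetic with $b_m^n$ and the mesh condition is routine once \eqref{eq:positive_condition_s_1}--\eqref{eq:positive_condition_s_2} are in hand.
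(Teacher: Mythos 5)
Your proof is correct and follows essentially the same route as the paper's: induction in $n$, monotonicity of the exponential update for $c$, positivity of the three stencil weights in \eqref{eq:scheme_s_c_1} (with \eqref{eq:positive_condition_s_1} giving $|b_m^n|\le 1$ and \eqref{eq:positive_condition_s_2} giving nonnegativity of the diagonal weight), and the sub-convex-combination bound $s_m^{n+1}\le\widetilde\eta$. The only differences are presentational — you bound $|b_m^n|$ via the absolute value where the paper treats the two off-diagonal coefficients separately, and you flag the strictness of the upper bound, a point the paper's own computation (which only yields $s_m^{n+1}\le\widetilde\eta$) leaves implicit.
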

\begin{proof}
    Let us observe that from the expression of $c^{n+1}_m$, it is clear that if $s^n_m\in [0,\tilde{\eta}),$ then $c^{n+1}_m \in[0,c^n_m]$. This is true for $n = 0$. Let us suppose it is true for all $k \le n$. Then  for any $k\in\{1,\ldots,n+1\},$  $c^{k}_m \in[0,\bar{c}_0]$ and then, for any $m\in\{1,\ldots,M\},$ 
\begin{equation*}%\label{eq:prop_positivity_porosity}
0<\varphi(\bar{c}_0)=\varphi_1+\varphi_2 \bar{c}_0\le \varphi(c_m^{k} ) \le \varphi_1< 1.
\end{equation*}
From condition \eqref{eq:positive_condition_s_1} one may prove that   the second coefficient of \eqref{eq:scheme_s_c_1} is positive; indeed, \begin{eqnarray*}
   1-\frac{\varphi{(c^n_{m+1})}-\varphi{(c^n_{m-1})}}{4\varphi(c_m^n)}
     &=&   \frac{4\varphi(c_m^n)-\varphi{(c^n_{m+1})}+\varphi{(c^n_{m-1})}}{4\varphi(c_m^n)} \\
     &\ge&\frac{5 \varphi(\bar{c}_0) -\varphi(c^n_{m+1})}{4\varphi(c_m^n)} \\
      &\ge& \frac{5 \varphi(\bar{c}_0) -\varphi_1   }{4\varphi(c_m^n)} =\frac{4 \varphi_1+5\varphi_2 \bar{c}_0     }{4\varphi(c_m^n)} > 0.
\end{eqnarray*}

In a similar way,
 \begin{eqnarray*}
   1+\frac{\varphi{(c^n_{m+1})}-\varphi{(c^n_{m-1})}}{4\varphi(c_m^n)}
     &\ge&   \frac{5\varphi(\bar{c}_0)-\varphi{(c^n_{m-1})}}{4\varphi(c_m^n)}  \ge   \frac{5 \varphi(\bar{c}_0)-\varphi_1 }{4\varphi(c_m^n)}  \ge 0.
\end{eqnarray*}
Then,  for the induction hypothesis we may prove the upper bound for $s^{n+1}_m$; since $(1-\varphi_2 s^n_m)>0$, we get \begin{eqnarray*} 
  s_m^{n+1}  &\le & \bar{\Delta}    s_{m+1}^n +  \bar{\Delta}   s_{m-1}^n +  \left[1-2 \bar{\Delta} -  \lambda \Delta_t    c_m^n \left(1-\varphi_2 s^n_m\right)\right ] s_m^n \\
  &\le & 2 \bar{\Delta}   \widetilde\eta  +  \left[1-2 \bar{\Delta}   \right ] s_m^n \\
  &\le & 2 \bar{\Delta}   \widetilde\eta  +  \left[1-2 \bar{\Delta}   \right ] \widetilde\eta = \widetilde\eta
\end{eqnarray*}
For the proof of the lower bound, we need to prove that also the last coefficient of the equation is positive;  we have the following 
\begin{eqnarray*}
     1-2 \bar{\Delta} -  \lambda \Delta_t    c_m^n \left(1-\varphi_2 s_m\right) &=&1-2 \frac{\Delta_t}{\Delta_x^2}-  \lambda \Delta_t    c_m^n  + \lambda \Delta_t    c_m^n \varphi_2 s_m \\
     &\ge&1-2 \frac{\Delta_t}{\Delta_x^2}-  \lambda \Delta_t    \bar{c}_0  + \lambda \Delta_t \varphi_2 \bar{c}_0   \widetilde\eta\\
     &=&\frac{1}{\Delta_x^2}\left[\Delta_x^2-\Delta_t\left(2  + \Delta_x^2\lambda     \bar{c}_0  (1-   \varphi_2   \widetilde\eta)\right)\right]
\end{eqnarray*}
The last is greater than zero if and only if condition \eqref{eq:positive_condition_s_2} is satisfied. Then, the thesis is achieved. 
\end{proof}

\begin{corollary}
Let us suppose that the hypotheses of Proposition \ref{prop:boundeness_scheme_s_c} are satisfied. Then the solution of \eqref{eq:heat_scheme} and  \eqref{eq:scheme_v_c_1}-\eqref{eq:scheme_v_c_2} are such that for any 
$ (n,m) \in \{ 1,\ldots, N \}\times \{1,\ldots, M\}$, and any $\omega\in \Omega,$
\begin{equation}\label{eq:v_negative}
-\widetilde\eta \le -u_m^n(\omega)\le   v^n_m(\omega) \le \widetilde\eta -u_m^n.
\end{equation}
\end{corollary}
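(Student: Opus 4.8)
The plan is to obtain \eqref{eq:v_negative} as an immediate algebraic consequence of the splitting $s^n_m=u^n_m+v^n_m$ together with two pathwise bounds: the bound $s^n_m(\omega)\in[0,\widetilde\eta)$ furnished by Proposition~\ref{prop:boundeness_scheme_s_c} under the present hypotheses, and a pathwise bound $0\le u^n_m(\omega)\le\widetilde\eta$ for the discrete heat solution of \eqref{eq:heat_scheme}. Granting both, one writes $v^n_m=s^n_m-u^n_m$, so that $v^n_m\ge -u^n_m$ because $s^n_m\ge 0$, and $v^n_m<\widetilde\eta-u^n_m$ (in particular $v^n_m\le\widetilde\eta-u^n_m$) because $s^n_m<\widetilde\eta$, while $-u^n_m\ge-\widetilde\eta$ because $u^n_m\le\widetilde\eta$. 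Chaining these three inequalities yields precisely $-\widetilde\eta\le-u^n_m\le v^n_m\le\widetilde\eta-u^n_m$, for every $(n,m)\in\{1,\ldots,N\}\times\{1,\ldots,M\}$ and every $\omega$.

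Thus the only thing to establish is the pathwise sandwich $0\le u^n_m\le\widetilde\eta$, which I would prove by a discrete maximum principle, arguing by induction on $n$. The first stability condition \eqref{eq:first_stability_condition_Delta}, $\bar\Delta\le 1/2$, makes the interior update $u^{n+1}_m=\bar\Delta u^n_{m+1}+(1-2\bar\Delta)u^n_m+\bar\Delta u^n_{m-1}$ a convex combination of the three neighbouring nodal values, and the same holds for the right Neumann row $u^{n+1}_M=(1-2\bar\Delta)u^n_M+2\bar\Delta u^n_{M-1}$. The $n=0$ layer is $u^0_m=0$ for $m\ge 1$ and $u^0_0=\widetilde\psi^0$, and the left boundary is $u^n_0=\widetilde\psi^n$; by the boundary–preserving LSST construction and the rescaling in \eqref{eq:stochastic_boundary_condition_s_c_limit} one has $\widetilde\psi^n(\omega)\in[0,\widetilde\eta)$ for all $n$ and all $\omega$ (this is exactly the control of the left boundary datum already exploited inside the proof of Proposition~\ref{prop:boundeness_scheme_s_c}). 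Hence, assuming $0\le u^n_m\le\widetilde\eta$ for all $m$, every value at level $n+1$ is a convex combination of level-$n$ values lying in $[0,\widetilde\eta]$, hence itself in $[0,\widetilde\eta]$, and $u^{n+1}_0=\widetilde\psi^{n+1}\in[0,\widetilde\eta)$; this closes the induction.

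I do not expect a genuine obstacle here: the corollary is essentially bookkeeping built on Proposition~\ref{prop:boundeness_scheme_s_c} and the discrete maximum principle for \eqref{eq:heat_scheme}. The two points that require a line of care are the right Neumann endpoint (one checks that $1-2\bar\Delta\ge 0$ and $2\bar\Delta\ge 0$, which is again just \eqref{eq:first_stability_condition_Delta}, so no extra hypothesis is needed) and the pathwise bound $\widetilde\psi^n\in[0,\widetilde\eta)$ on the simulated boundary data, which is precisely why the Lamperti/LSST step was designed to preserve the interval $(0,\eta)$. One could alternatively invoke the maximum-norm stability statement \eqref{eq:prop_stability_U_supnorm} of Proposition~\ref{prop:stability_u} for the upper bound on $u$, but since \eqref{eq:v_negative} is a pathwise statement with the sharp constant $\widetilde\eta$, the direct maximum-principle induction above is the cleanest route, and it simultaneously delivers the nonnegativity $u^n_m\ge 0$ needed for the middle inequality.
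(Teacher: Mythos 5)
Your proposal is correct and follows essentially the same route as the paper, whose entire proof is the one-line remark that the corollary follows from Proposition \ref{prop:boundeness_scheme_s_c} together with the fact that $u^n_m\in[0,\widetilde\eta)$; you simply supply the details the paper leaves implicit, namely the identity $v=s-u$ combined with $s^n_m\in[0,\widetilde\eta)$ and the discrete maximum-principle induction (under $\bar\Delta\le 1/2$ and $\widetilde\psi^n\in[0,\widetilde\eta)$) that justifies the asserted sandwich $0\le u^n_m\le\widetilde\eta$.
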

\begin{proof}
      The result easily follows from  Proposition \ref{prop:boundeness_scheme_s_c} and the fact that $u_M^n \in [0,\eta)$.\\      
\end{proof}
 \begin{proposition}
        Let us suppose that the hypotheses of Proposition \eqref{prop:boundeness_scheme_s_c} are satisfied. Then there exist positive constants $C_{\Delta_x,T,\bar{x}}, C_{\Delta_x,T}$  for any 
      $ n\in \{ 1,\ldots, N \} $, the solution $V^n$ of \eqref{eq:v_system_matrix_form} is such that
      \begin{equation}\label{eq:stability_pathwise_L2_v}
          \|V^{n}(\omega) \|_{\Delta_x} \le   C_{\Delta_x,T,\bar{x}} \widetilde \eta, 
      \end{equation} 
      and \begin{equation*}%\label{eq:stability_pathwise_L2_v_max}
\|V^{n}(\omega) \|_{\max}   \le C_{T} \widetilde\eta.
\end{equation*}
The same stability results in mean follow.
 \end{proposition}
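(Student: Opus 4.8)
The plan is to read the estimate off the pointwise bounds already obtained, rather than estimating operator norms in the recursion \eqref{eq:v_system_matrix_form}; the genuine content sits in Proposition~\ref{prop:boundeness_scheme_s_c} and its Corollary, and what remains here is essentially a packaging step plus a little care with the dependence of the constants on the mesh. By \eqref{eq:v_negative}, for every $\omega\in\Omega$ and every $(n,m)\in\{1,\dots,N\}\times\{1,\dots,M\}$ we have
\[
-\widetilde\eta \;\le\; -u_m^n(\omega) \;\le\; v_m^n(\omega) \;\le\; \widetilde\eta - u_m^n(\omega) \;\le\; \widetilde\eta ,
\]
because $u_m^n(\omega)\in[0,\widetilde\eta)$; hence $|v_m^n(\omega)|\le\widetilde\eta$ uniformly in $(n,m)$ and in $\omega$. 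Together with the boundary value $v_0^n=0$ from \eqref{eq:boundary_condition_scheme_v_c}, this at once gives the maximum-norm bound $\|V^n(\omega)\|_{\max}\le\widetilde\eta$, so one may take $C_T=1$.

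For the discrete Euclidean norm \eqref{def:euclidean_discrete} I would simply sum squares: using $v_0^n=0$ and $\Delta_x M=\overline{x}$,
\[
\|V^n(\omega)\|_{\Delta_x}^2 \;=\; \Delta_x\sum_{m=0}^{M}\bigl|v_m^n(\omega)\bigr|^2 \;=\; \Delta_x\sum_{m=1}^{M}\bigl|v_m^n(\omega)\bigr|^2 \;\le\; \Delta_x\,M\,\widetilde\eta^{\,2} \;=\; \overline{x}\,\widetilde\eta^{\,2},
\]
whence $\|V^n(\omega)\|_{\Delta_x}\le\sqrt{\overline{x}}\,\widetilde\eta=:C_{\Delta_x,T,\overline{x}}\,\widetilde\eta$ (the subscripts $\Delta_x,T$ are kept only for notational uniformity with Proposition~\ref{prop:stability_u}; note that, unlike the bound for $U^n$ coming from the Dirichlet datum, here there is no blow-up as $\Delta_x\to0$). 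Since all right-hand sides are deterministic, taking expectations --- or expectations of squares --- preserves the inequalities, which yields the stated bounds in mean and in mean square.

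For completeness one can also run a self-contained stability argument directly from \eqref{eq:v_system_matrix_form}: under \eqref{eq:first_stability_condition_Delta} and the hypotheses of Proposition~\ref{prop:boundeness_scheme_s_c} the entries of $G(n)$ are nonnegative with row sums $1+h_m^n$, where $|h_m^n|\le\lambda\Delta_t\,\overline{c}_0(1+|\varphi_2|\widetilde\eta)$, so that $\|G(n)\|_\infty\le 1+C\Delta_t$; iterating, and using $\|P(n)\|$, $|b_m^n|\le C$ together with $\|U^n\|\le C\widetilde\eta$ from Proposition~\ref{prop:stability_u} and $|\widetilde\psi^n|\le\widetilde\eta$, one recovers the same conclusion. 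On this route the one delicate point is the discrete Gronwall step, namely controlling the accumulation of the products $\prod_j\|G(j)\|$ over $N=T/\Delta_t$ steps so that it stays bounded by $e^{CT}$; the pointwise argument above sidesteps this entirely, so I would present the pointwise proof as the main one and mention the matrix estimate only as a remark.
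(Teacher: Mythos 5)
Your proof is correct and follows essentially the same route as the paper's: both arguments simply read the bound off the pointwise estimate \eqref{eq:v_negative} from the preceding Corollary and then pass to the discrete norms. The only (harmless) difference is in the packaging: the paper writes $\|V^n\|_{\Delta_x}\le\|U^n\|_{\Delta_x}+\|H\|_{\Delta_x}$ and invokes Proposition \ref{prop:stability_u} for the $U$-term, whereas you use $u_m^n\in[0,\widetilde\eta)$ to get the slightly sharper pointwise bound $|v_m^n|\le\widetilde\eta$ directly, which yields a constant $\sqrt{\overline{x}}$ independent of $\Delta_x$.
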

 \begin{proof}
      From the bound \eqref{eq:v_negative}, we get that  for any 
      $ (n,m) \in \{ 1,\ldots, N \}\times \{1,\ldots, M\}$, 
       $$
      \sum_{m=1}^M  |v_m^n|^2\le    \sum_{m=1}^M|u_m^n |^2+  M | \widetilde\eta|^2.  $$
      Then, for 
      $V^n=\left(v_1^n,\ldots,v_M^n\right)^T, H=id(\widetilde\eta)\in \mathbb R^M$ the following bound, for any $n=0,\ldots,N-1,$ $$  \|V^{n+1}(\omega) \|_{\Delta_x} \le \|U^{n+1}(\omega) \|_{\Delta_x} + \|H \|_{\Delta_x} \le   C_{\Delta_x,T } + \bar{x}\widetilde \eta\le C_{\Delta_x,.T,\bar{x}} \widetilde \eta
        $$
Then, bound \eqref{eq:stability_pathwise_L2_v} follows from Proposition \ref{prop:stability_u}. The rest of the thesis easily follows with the same arguments and by considering the mean.
\end{proof}
\begin{remark}  Condition \eqref{eq:positive_condition_s_1} is equivalent to a condition on the initial porosity, that is $\varphi(\bar{c}_0) >\varphi_1/5$. This means that the initial porosity needs to be not too small. On the other hand, condition \eqref{eq:positive_condition_s_2} says that $\bar\Delta\le 1/2$ is not sufficient for both the positivity and stability of $v$, but a strong restriction is needed. Moreover, condition \eqref{eq:positive_condition_s_2} involves the velocity of the reaction $\lambda$: thus, the stability of solutions is ensured even in the accelerated regime.
\end{remark}
The study of the convergence rate of the scheme is beyond the purposes of this work; however,  we numerically estimate the pathwise spatial accuracy order. We look 
at the ratios of the distances of two solutions computed for halved successive $\Delta_x$. For a fixed time step $\Delta t$, the \emph{spatial pathwise accuracy order} of a discrete function $g$ is estimated, for any $\omega \in \Omega$ as
$$
p_g(\omega)= \log_2
\left(\frac{\left\|g^N_{\cdot} (\Delta_x,\omega) - g^N_{\cdot} (\Delta_x/2,\omega)\right\| }{\left\|g^N_{\cdot} (\Delta_x/2,\omega) - g^N_{\cdot} (\Delta_x/4,\omega)\right\|} \right),
$$
 where  $\|\cdot \|$ is the  Euclidean norm in space \eqref{def:euclidean_discrete} and $g_m^n(\delta,\omega)$ is the discretized function $g$ at $m$-th position at $n$.th time, in the case of a spatial mesh size $\delta$ and for the realization $\omega$.

\begin{table}[!h]
    \centering
     \begin{tabular}{|c|ccc|ccc|}
    \hline
   $\Delta_x$  	&\multicolumn{3}{|c|}{ {$\|\rho_{\Delta_x}-\rho_{\Delta_x/2}  \|_2$}}		&\multicolumn{3}{|c|}{	$p_\rho$} 	\\
   \hline
	&$\omega_1$&	$\omega_2$&	$\omega_3$&	$\omega_1$	&$\omega_2$	&$\omega_3$  \\
    \hline
0.125&	0.016&	0.014&	0.017&	1.304&	1.316&	1.283\\
0.0625&	0.006	&0.006	&0.007&	1.157&	1.167&	1.146\\
0.03125&0.003  &0.002&	0.003&&	&\\		
0.015625&	&&	&&&\\									
    \hline
    \hline
   $\Delta_x$  	&  \multicolumn{3}{|c|}{			$\|c_{\Delta_x}-c_{\Delta_x/2}  \|_2$		}	&\multicolumn{3}{|c|}{$p_c$	}	\\
   \hline
   &$\omega_1$&	$\omega_2$&	$\omega_3$&	$\omega_1$	&$\omega_2$	&$\omega_3$  \\
   \hline
   0.125&	0.101	&0.095	&0.104	&1.330	&1.337	&1.329 \\
0.0625&	0.040&	0.037&	0.041&	1.181&	1.185&	1.180 \\
0.03125& 0.018	&0.016&	0.018&&	&\\		
0.015625&	&&	&&&\\
\hline
    \end{tabular}
    \caption{Numerical estimation of the Euclidean $L^2$ norms and the spatial accuracy order for three different random paths at the boundary for $\rho$ and $c$ at final time $T=1$.  Parameters:  $\sigma_3=1,\varphi_1=0.2$ and $\lambda=1$; other parameters in Table \ref{table:parameters}.}
    \label{tab:accuracy_order} 
\end{table}
 Let the time step $\Delta_t = 1.907e-06$ be fixed. We consider $\Delta_x=0.125$ and the successive halved $\{\Delta_x/2, \Delta_x/4,\Delta_x/8\}$ and let $(\rho^N_m, c^N_m)_{m}$ be the discrete solution at the time $T=t_N$. 
Tables \ref{tab:accuracy_order} show the results for a specific set of parameters, even though different parameters do not show very different results. We consider only four mesh sizes and we see how results are rather stable with respect to different random boundary paths, with an estimation of the order one accuracy.

\section{Numerical Sampling: pathwise results}\label{se:numerical_sampling}

We sample the solution of the system by considering a time horizon $[0,T]=[0,1.5]$ and a spatial domain $[0,\bar{x}]=[0,1.5].$ In the following, we show the numerical results of the random    PDE  in the smaller space window $[0,1],$ to avoid the influence at the boundary condition at $\bar{x}=1.5.$ 
The time step is
$\Delta_t= 1.99 e^{-5},$ while the spatial one  is $ \Delta_x= 10^{-2}.$

\medskip

We aim to test the behaviour of the diffusion-advection-reaction system under the influence of the stochastic dynamical boundary condition. Concerning the sampling of the complete system, we perform simulations for different values of diffusion parameters for the SDE;  furthermore, to catch the role of the activation energy in the penetration of the sulphur dioxide and the consequent degradation of the calcium carbonate, we also consider different reaction rates $\lambda$, by considering a slow ($\lambda=1$) and fast ($\lambda_3=100$) regimes. The case of a diffusion coefficient $\sigma_0=0$ in equation \eqref{eq:SDE_for_SO2} corresponds to the deterministic case; in particular, we consider a configuration in which the deterministic boundary condition for $\rho$ in $x=0$ is not constant as in   \cite{2004_ADN,2007_GN_CPAA}, but it is a function $\Psi_0\in C_b([0,T])$, i.e. for $t\in [0,T]$
\begin{equation}
\label{eq:deterministic_boundary_rho}
\rho_{\sigma_0}(t,0)=\Psi_0(t)=\gamma\left( 1-e^{-\alpha t}\right).
\end{equation}
Note that the constant boundary condition $\rho_{\sigma_0}(t,0)=1$ studied in   \cite{2004_ADN} corresponds to the time asymptotic limit of equation \eqref{eq:deterministic_boundary_rho}. 

 \medskip

Table \ref{table:parameters} shows the parameters considered in the numerical sampling.

\begin{table}[h!]
\begin{center}
\begin{tabular}{|l||c|c|c|c|c|c|c|c| }
\hline
& & & & & &   &  &  \\[-1em]
SDE&\,\,	$\alpha$\,\, & \,\,$\gamma$\,\, & \,\,$\eta$ \,\,& \,\,$\sigma_0$ \,\, &\,\,$\sigma_1$ \,\, &   \,\,$\sigma_2$ \,\, & \,\,$\sigma_3$ \,\, &  \\
& & & & & & & & \\[-1em]
\hline
& & & & & & & & \\[-1em]
 &7	 & 	1   &   1.5  &  0   &  0.25  &0.7  &  1&\\
[2pt]
 \hline
 \hline
& & & & & && & \\[-1em]
 PDE  & \,\, $\bar{s}_0$\,\,& \,\, $\bar{c}_0$\,\,&\,\, $\varphi_1$\,\,&\,\,$\widetilde{\varphi}_1$ \,\,&\,\, $\varphi_2$\,\,&\,\,  $\lambda_1$\,\,&  $\lambda_2$\,\,&\,\, $\lambda_3$ \\
 & & & & & & && \\[-1em]
 \hline
 & & & & & & & &\\[-1em]
  & 0& 10& 0.2 &0.7& -0.01&1& 10& 100 \\[-1em]
  & & & & & && \\
 \hline
\end{tabular}	
\caption{Parameter sets for the simulation of the PDE system \eqref{eq:heat_scheme} and \eqref{eq:scheme_v_c_1}-\eqref{eq:boundary_condition_scheme_v_c}.}\label{table:parameters}
\end{center}\end{table}

\subsection{Random heat equation and the different contributions to the sulphation: slow dynamics}

First of all, we consider the case of slower dynamics, where the activation energy or reaction rate $\lambda=\lambda_1=1$. Furthermore, a first case of a porosity function \eqref{eq:def_porosity} with $\varphi_1=0.2$  is examined. 
In this section we illustrate the dynamics pathwise, focusing on a single realization of the involved random processes. Following the splitting strategy, we firstly discuss the paths of $\left(u^n,\widetilde{\psi}^n\right)_n$  as the solution of the discrete stochastic system
\eqref{eq:heat_scheme}, together with $(v^n,c^n)_n$, or, equivalently, $(s^n=u^n+v^n,c^n)_n$ via system \eqref{eq:scheme_v_c_1}-\eqref{eq:boundary_condition_scheme_v_c}. 

 Figure \ref{fig:plot_u_different_sigma}  shows a sample path of the random heat equation \eqref{eq:heat_scheme} coupled with conditions \eqref{eq:boundary_initial_condition_U}  for different diffusion coefficients $\sigma$ for the dynamical stochastic boundary condition in $x=0$.  Randomness is concentrated at the boundary: thanks to the smoothing properties of the Laplace operator, the solution becomes more regular whenever it moves inside the domain. As the noise intensity $\sigma$ grows, the corresponding
solution oscillates more and more and oscillations spread all over the domain.  We may also appreciate the contribution of the porosity acting at the boundary condition, as expressed by \eqref{eq:psi_tilde_discrete}: when the SDE reaches a stationary distribution, the action of increasing porosity let the boundary  profile decrease. This is much more evident in the deterministic case (top-left), and for the smaller $\sigma$ (top-right).

\begin{figure}[h!]
    \centering
    \begin{subfigure}{0.39\linewidth}
        \centering
        \includegraphics[width=\linewidth]{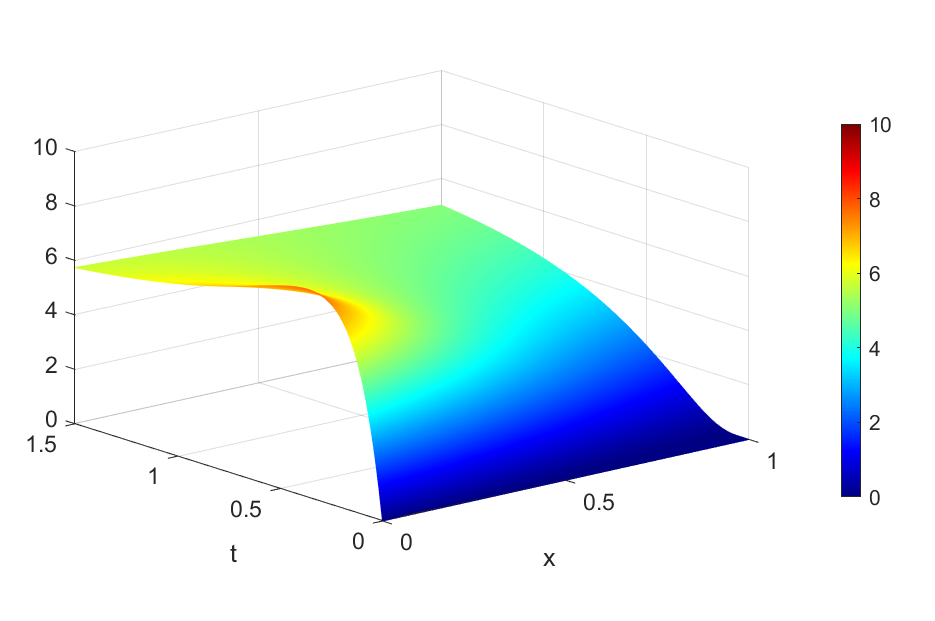}
    \end{subfigure}
    \begin{subfigure}{0.39\linewidth}
        \centering
        \includegraphics[width=\linewidth]{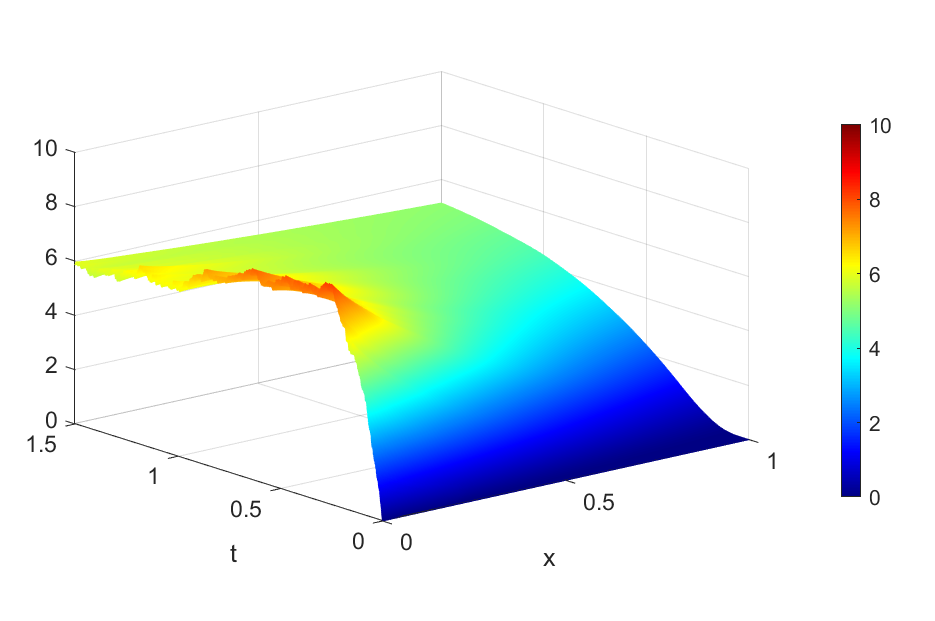}
    \end{subfigure}
     
    \begin{subfigure}{0.39\linewidth}
        \centering
        \includegraphics[width=\linewidth]{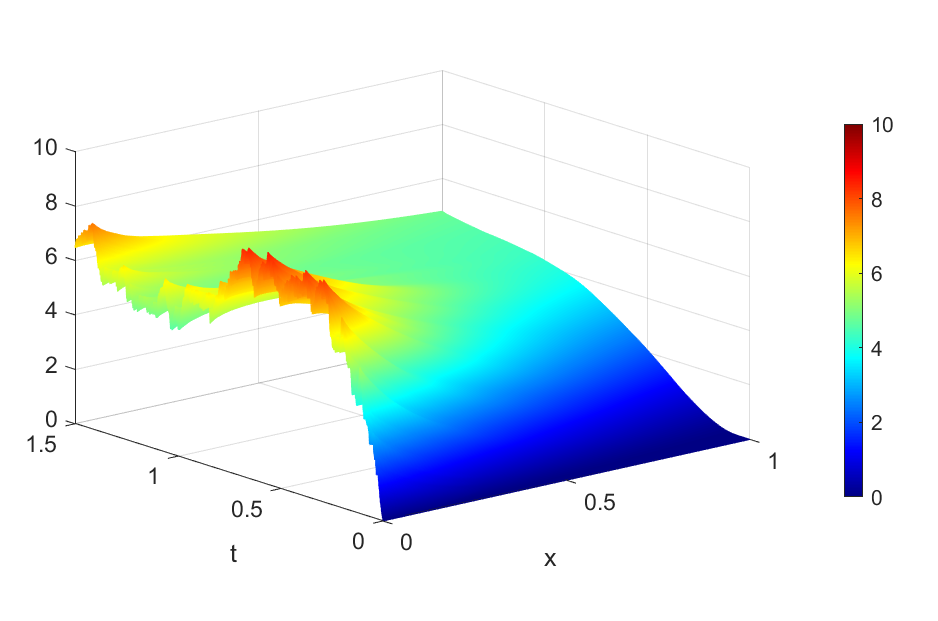}
    \end{subfigure}
    \begin{subfigure}{0.39\linewidth}
        \centering
        \includegraphics[width=\linewidth]{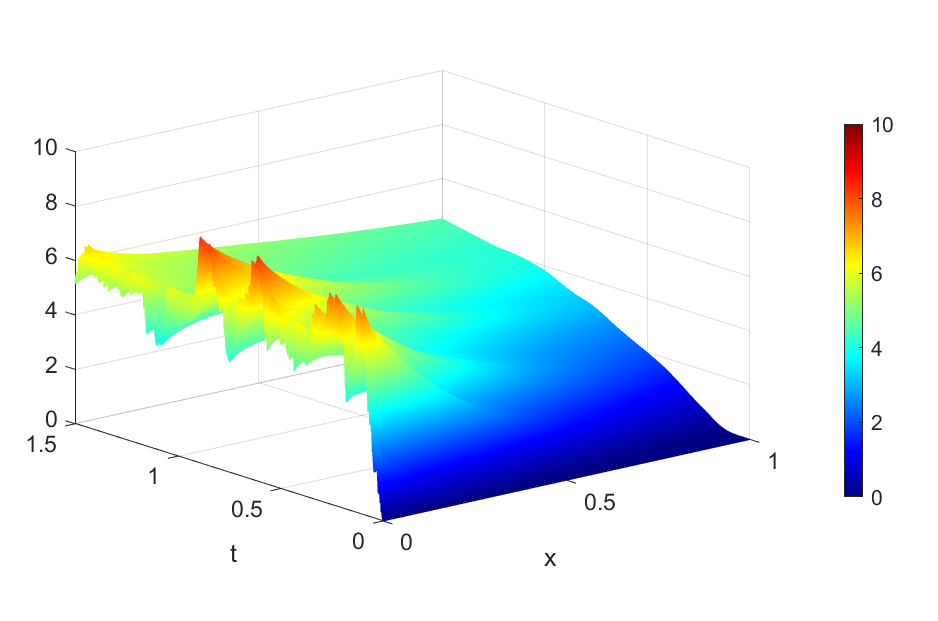}
    \end{subfigure}
   \caption{Random heat solution  in the case $\varphi_1=0.2$, for different values of $\sigma$. Top-left: $\sigma_0=0$; top-right: $\sigma_1=0.25$; bottom-left : $\sigma_2=0.7$;  bottom-right : $\sigma_3=1$. }\label{fig:plot_u_different_sigma}
\end{figure}

\vspace{-2cm}
\begin{figure}[h!]
    \centering
    \begin{subfigure}{0.39\linewidth}
        \centering
        \includegraphics[width=\linewidth]{Figure/Set_S2_P1_u_rescaled.eps}
    \end{subfigure}
    \begin{subfigure}{0.39\linewidth}
        \centering
        \includegraphics[width=\linewidth]{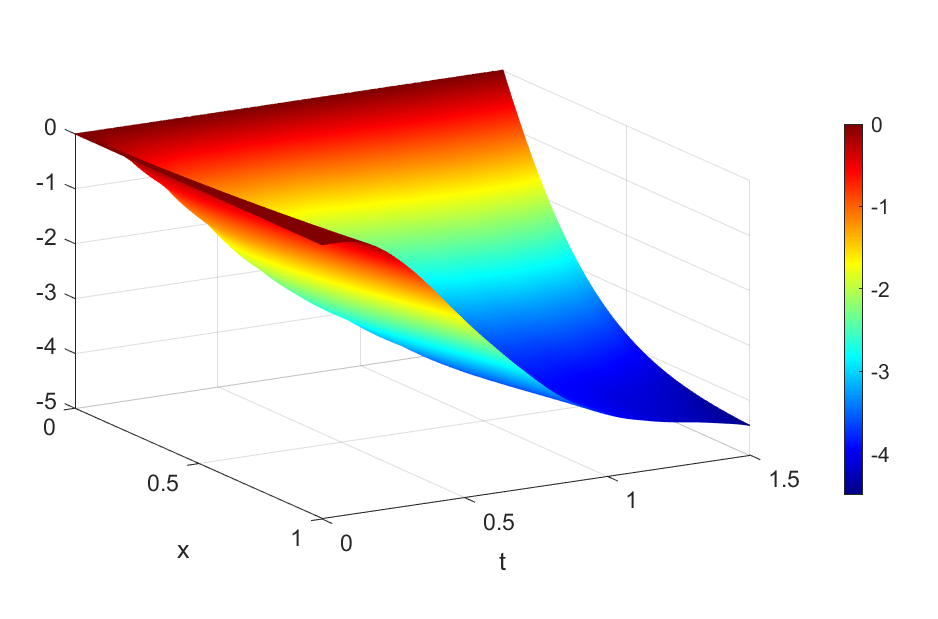}
    \end{subfigure}
   
    \vspace{-0.1cm}
    \begin{subfigure}{0.39\linewidth}
        \centering
        \includegraphics[width=\linewidth]{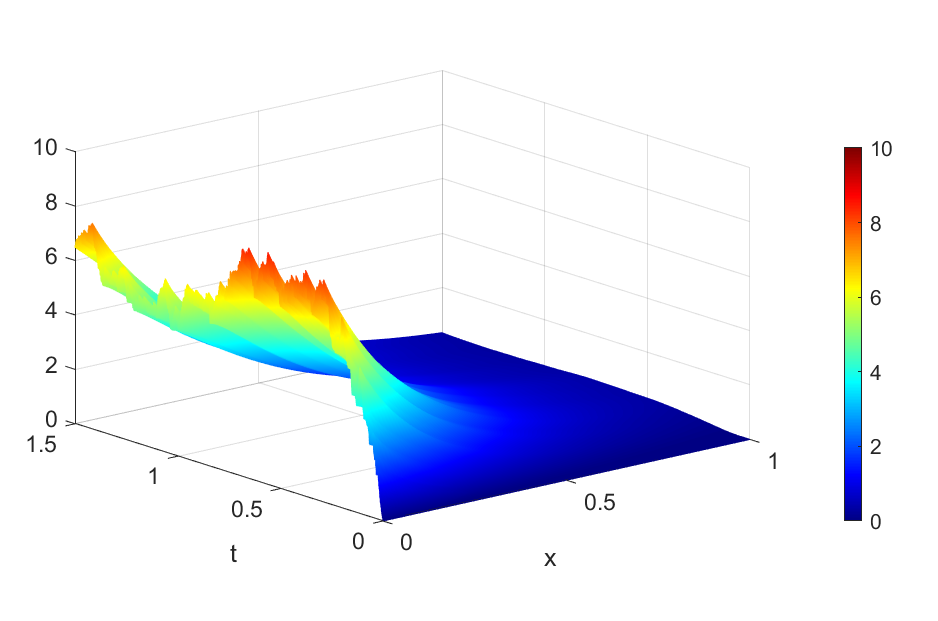}
    \end{subfigure}
    \begin{subfigure}{0.39\linewidth}
        \centering
        \includegraphics[width=\linewidth]{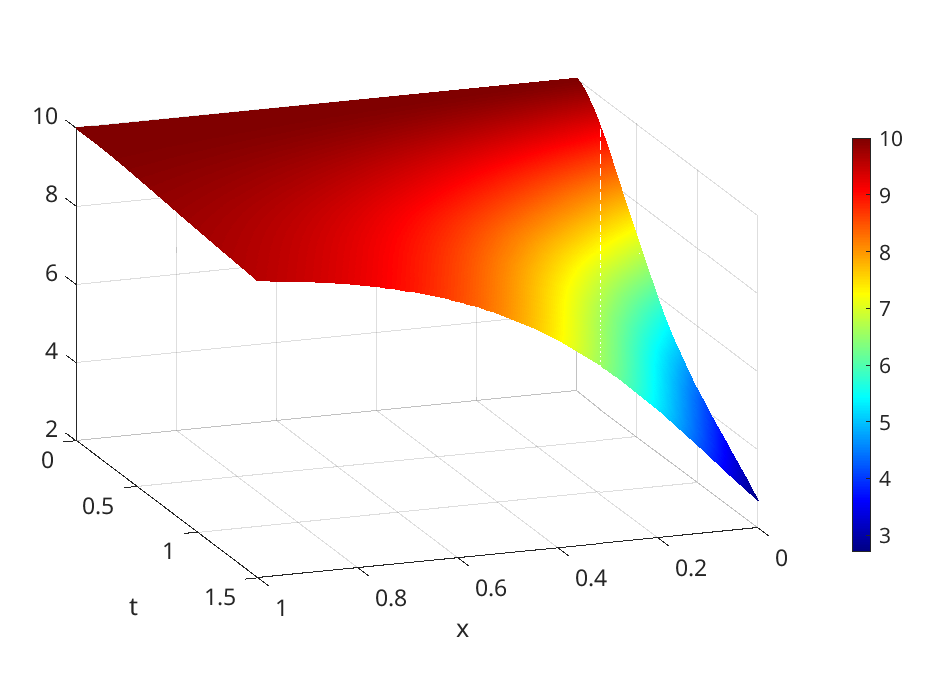}
    \end{subfigure}
  \caption{ Pathwise different contributions to the complete PDE with $\sigma_2=0.7$. First row:   random heat equation solution $u$  (left) and   solution $v$ of the non linear equation   \eqref{eq:scheme_v_c_1} (right). Second row: evolution of sulphur dioxide   $s=u+v$ (left) and calcite    $c$ in \eqref{eq:scheme_v_c_2}. }	
   \label{fig:plot_usvc}
\end{figure}

 \newpage
Figure \ref{fig:plot_usvc} depicts the single contribution of a path of the process $(s=u+v,c)$ that may be derived by the splitting strategy. The case of diffusion coefficient at the boundary  $\sigma_{2}=0.7$ is considered. 
The solution $v$ of the nonlinear 
equation \eqref{eq:scheme_v_c_1} clearly includes the reaction of the sulphur dioxide; since the initial and boundary conditions are zero, from 
\eqref{eq:stochastic_boundary_condition_s_c_limit} and \eqref{eq:v_negative} it gives a complete negative contribution bounded from below by $-\widetilde\eta= -15$. This is the reason why the sulphur dioxide porous concentration $s=u+v$ oscillates as $u$, but as soon as it moves away from the boundary, oscillations slow down to zero. As the profile of the calcite $c$ in \eqref{eq:scheme_v_c_2} concerns, we see how the degradation occurs as time increases, with a slower degradation of the calcite profile far from the boundary.  
Figure \ref{fig:plot_rho_different_sigma} illustrates the evolution of a pathwise solution of  $\rho= \varphi(c)s$ sampled, for any $(m,n)\in \{0,\dots,M\}\times\{0,\ldots,N\}$, by $$
\rho_m^n=(v_m^n+u_m^n)\, \varphi(c_m^n),
$$
with $ c_m^n$ in \eqref{eq:scheme_v_c_2}. As the noise at the boundary increases, the perturbation is much more detected far from the boundary, even though the effect is mitigated by the low reaction rate and the quite low porosity of the transformed material.
 
\begin{figure}[h!]
 \centering
    \begin{subfigure}{0.4\linewidth}
        \centering
        \includegraphics[width=\linewidth]{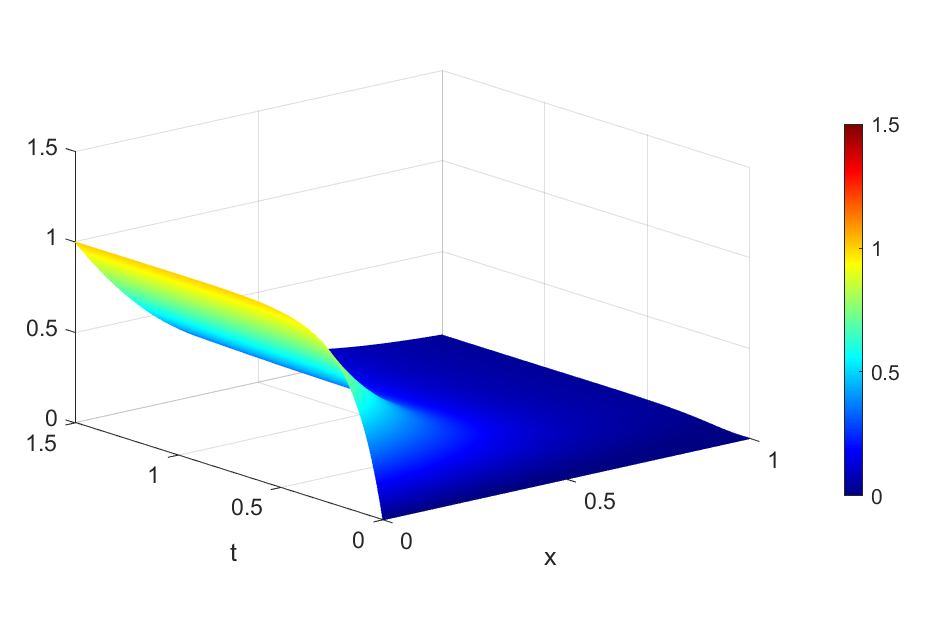}
    \end{subfigure}
    \begin{subfigure}{0.4\linewidth}
        \centering
        \includegraphics[width=\linewidth]{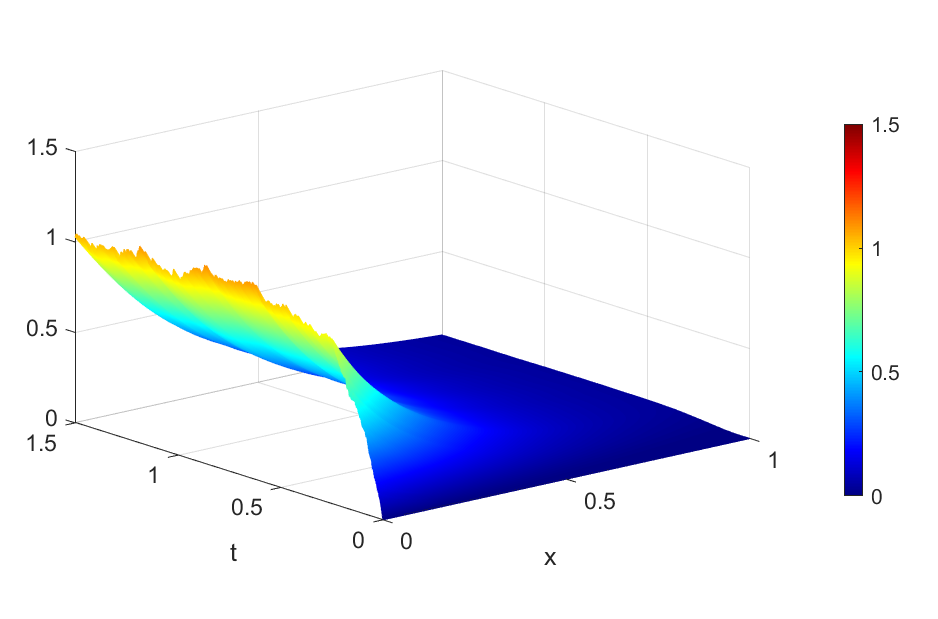}
    \end{subfigure}
  
    \vspace{-0.5cm}
    \begin{subfigure}{0.4\linewidth}
        \centering
        \includegraphics[width=\linewidth]{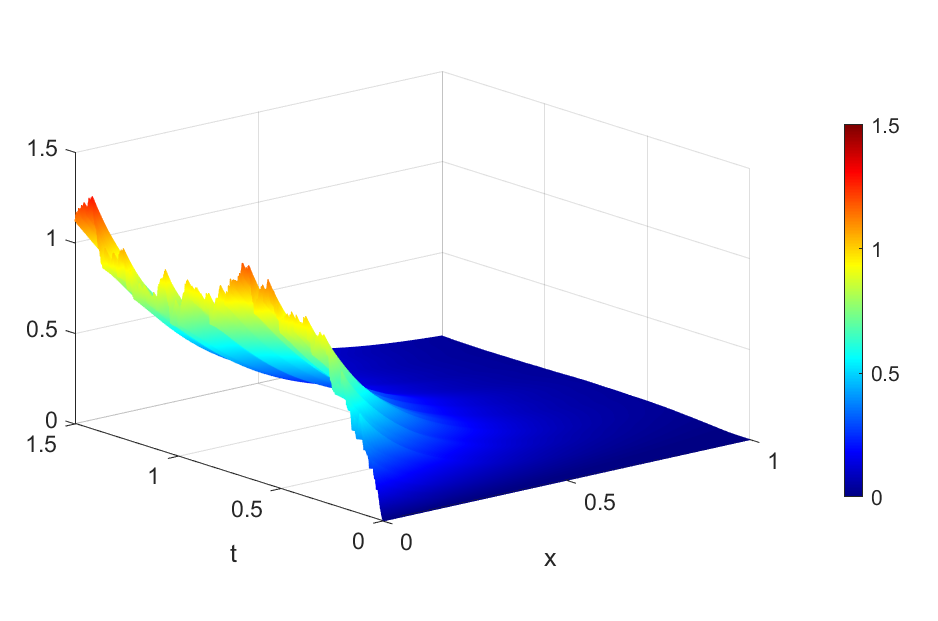}
    \end{subfigure}
    \begin{subfigure}{0.4\linewidth}
        \centering
        \includegraphics[width=\linewidth]{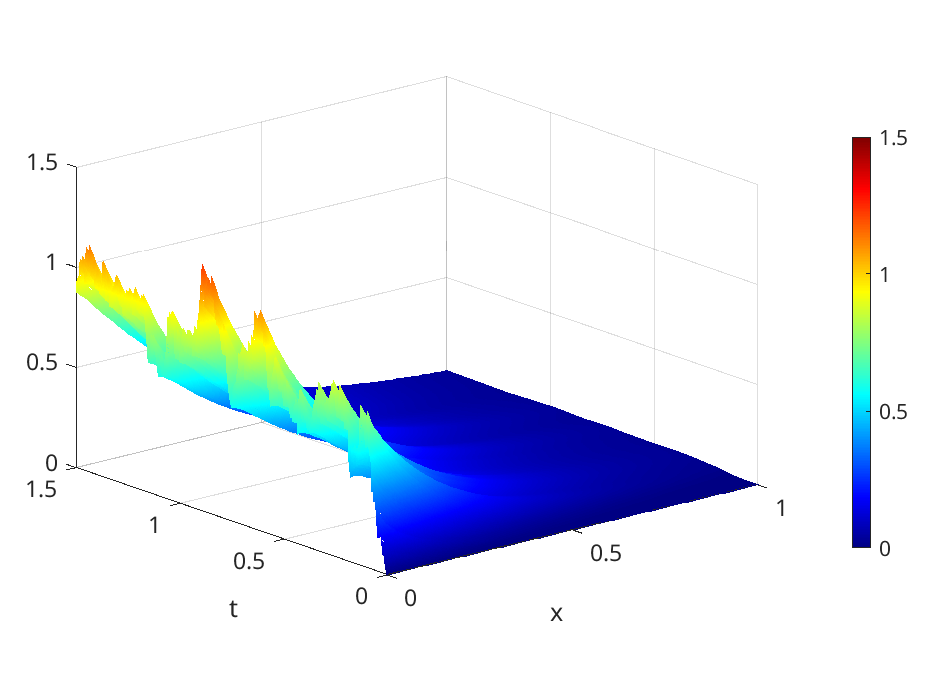}
    \end{subfigure}

\caption{The role of the diffusion $\sigma$. A single path of the \ch{SO2} concentration $\rho=\varphi s$ for different levels of boundary noise. Top-left: $\sigma_0=0$; top-right: $\sigma_1=0.25$; bottom-left: $\sigma_2=0.7$;  bottom-right: $\sigma_3=1$. }
\label{fig:plot_rho_different_sigma}
\end{figure}

\smallskip

Gypsum porosity may reach a level of 70\%. The higher the porosity, the faster the penetration of the sulphur dioxide and, thus, the degradation. The effect may be detected in Figure \ref{fig:confronto_lambda1_sigma07_phi1_02_07}: the evolutions of a realization of the \ch{SO2} and the corresponding calcium carbonate density for a lower porosity $\varphi_1=0.02$ are compared with the ones with higher porosity $\widetilde{\varphi}_1=0.7$. All other parameters involved remain unchanged.    

\begin{figure}[h!]
    \centering
    \begin{subfigure}{0.4\linewidth}
        \centering
        \includegraphics[width=\linewidth]{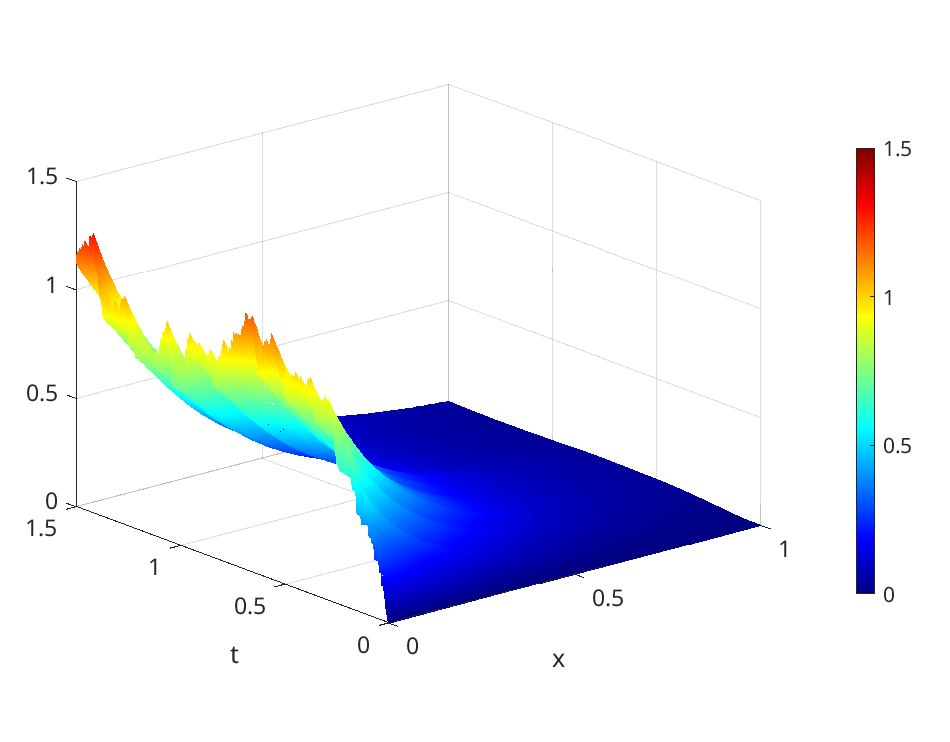}
    \end{subfigure}
    \begin{subfigure}{0.4\linewidth}
        \centering
        \includegraphics[width=\linewidth]{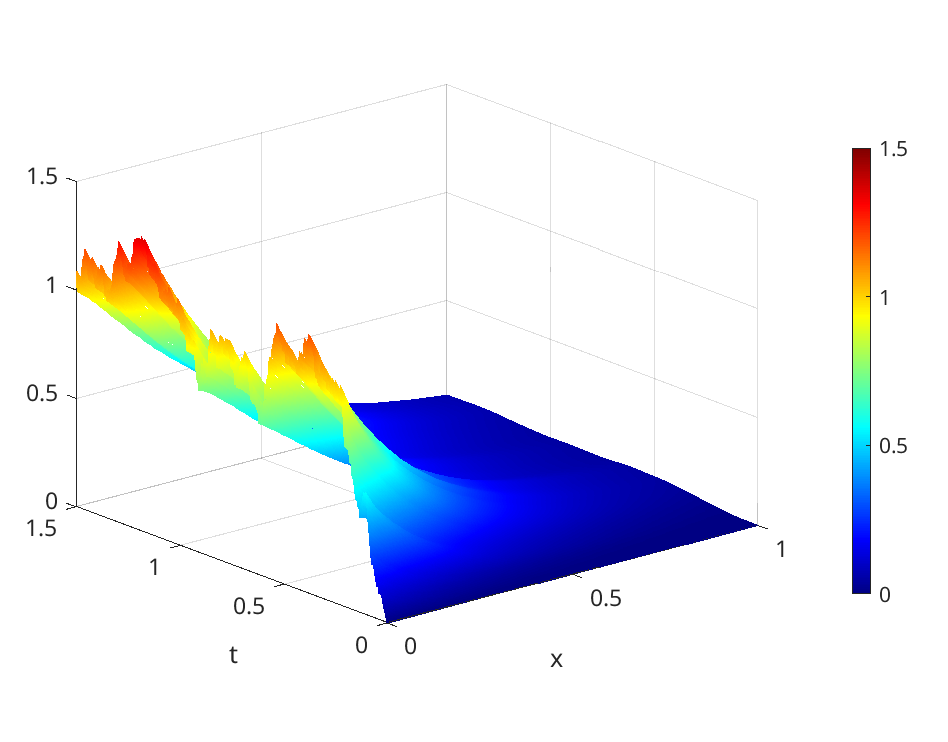}
    \end{subfigure}
    
    \vspace{-0.5cm}
    \begin{subfigure}{0.4\linewidth}
        \centering
        \includegraphics[width=\linewidth]{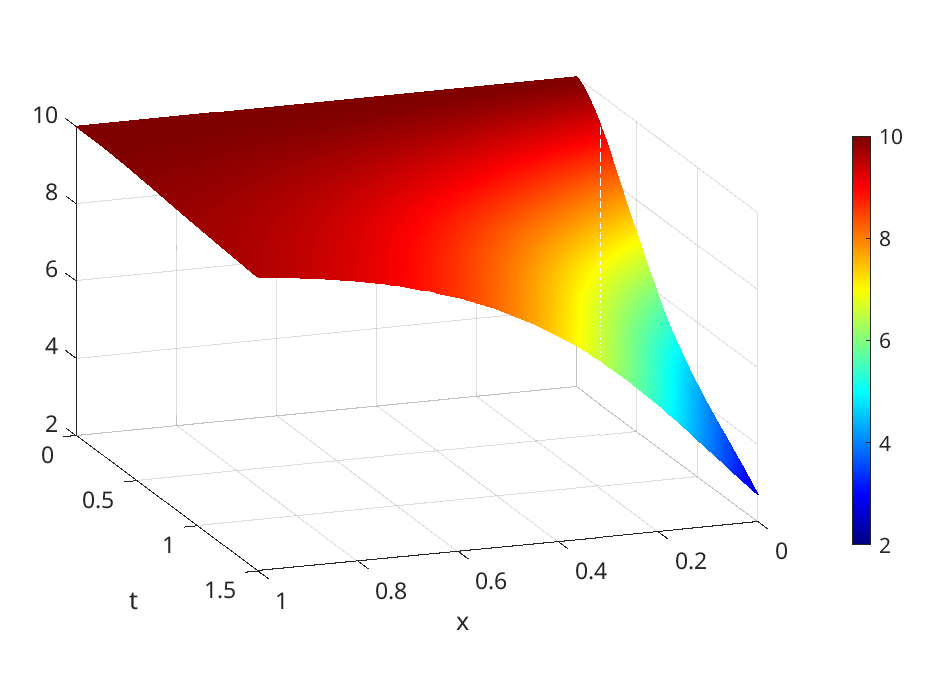}
    \end{subfigure}
    \begin{subfigure}{0.4\linewidth}
        \centering
        \includegraphics[width=\linewidth]{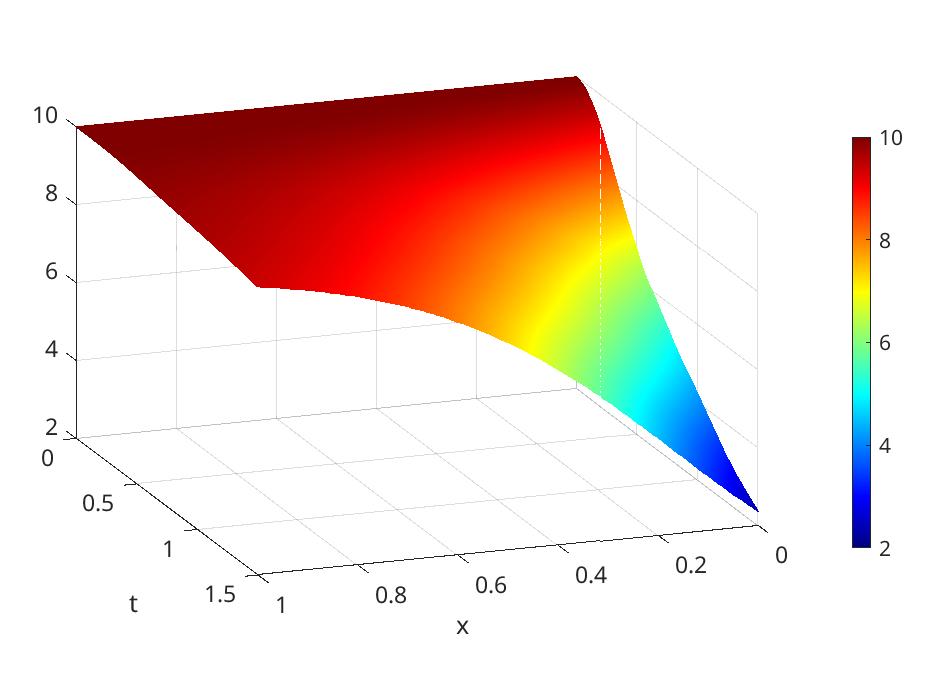}
    \end{subfigure}
   \caption{The role of the gypsum porosity via $\varphi_1$. Evolution of a realization of \ch{SO2} (first row) and the corresponding marble concentration (second row) for   $\varphi_1=0.02$  (left) and a higher porosity $\widetilde{\varphi}_1=0.7$ (right). Other parameters: $\lambda=1$,   $\sigma_2=0.7$. }
\label{fig:confronto_lambda1_sigma07_phi1_02_07}
\end{figure}

\subsection{Faster reactions: comparison of the sulphur dioxide and calcium  carbonate profiles}
Here, we consider different values of the parameter $\lambda\in \mathbb R_+$ in System \eqref{eq:PDE}, \eqref{eq:IC} and \eqref{eq:BC}.
As discussed in   \cite{2007_GN_CPDE},
an accelerated regime for the reaction by assuming different values $\lambda_i, \, i=2,3$ accordingly to Table \ref{table:parameters} is considered. 
This is equivalent to analyse the asymptotic behaviour of the system by accelerating time by a factor of $\lambda_i$ and taking a space expansion $\sqrt{\lambda_i} x$. By changing the parameter $\lambda$, the role of the activation energy in the penetration of the sulphur dioxide and the consequent degradation of the calcium carbonate is captured. 
Figure \ref{fig:confronto_lambda_sigma_1_phi1_02} shows the evolution of the couple $(\rho,c)$ for a larger time interval $ [0,5]$ and for increasing value of $\lambda$.  
Note that we reverse the angle view for $\rho$ to show both space and time evolution along the whole grid.
The solution has a very different qualitative aspect: as the interaction coefficient $\lambda$ increases, the transition zone gets smaller, leading to more evident calcite deterioration at the boundary of the sample, while the interior remains unaffected by sulphur dioxide. 
In particular in Figure \ref{fig:confronto_lambda_sigma_1_phi1_02} -(c), for $\lambda_3=100$, we may see the formation of a moving front. The study of the behaviour of the front distribution in the stochastic case from a theoretical point of view will be the subject of a future work.

\begin{figure}[h!]
    \centering
    \begin{subfigure}{0.4\textwidth}
        \centering
        \includegraphics[width=\textwidth]{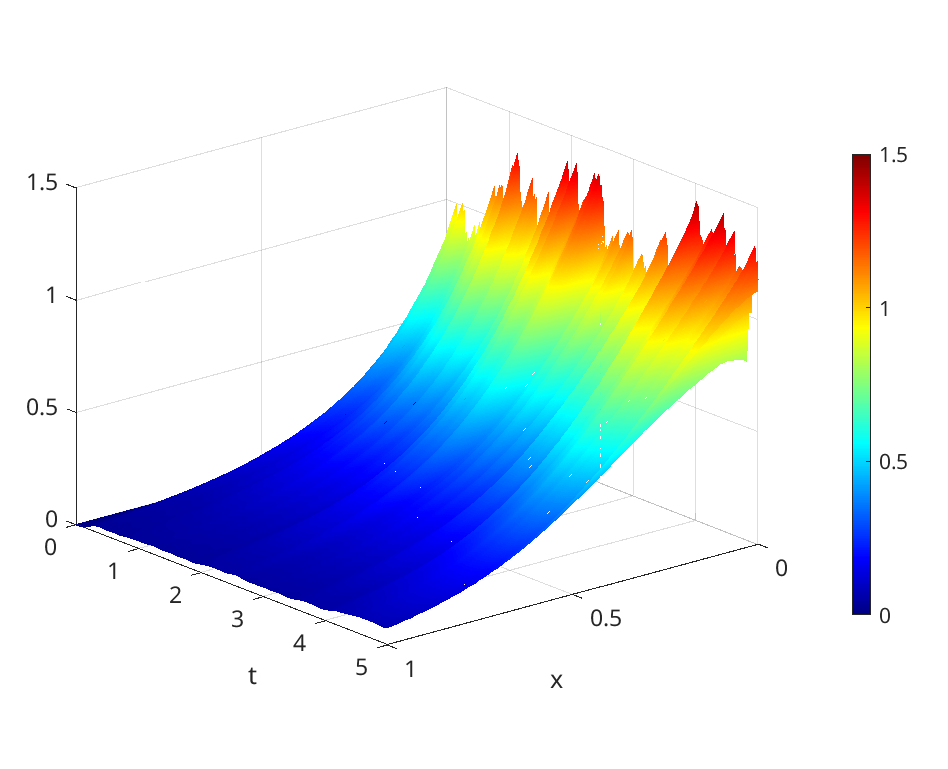}
    \end{subfigure}
         \begin{subfigure}{0.4\linewidth}
        \centering
        \includegraphics[width=\linewidth]{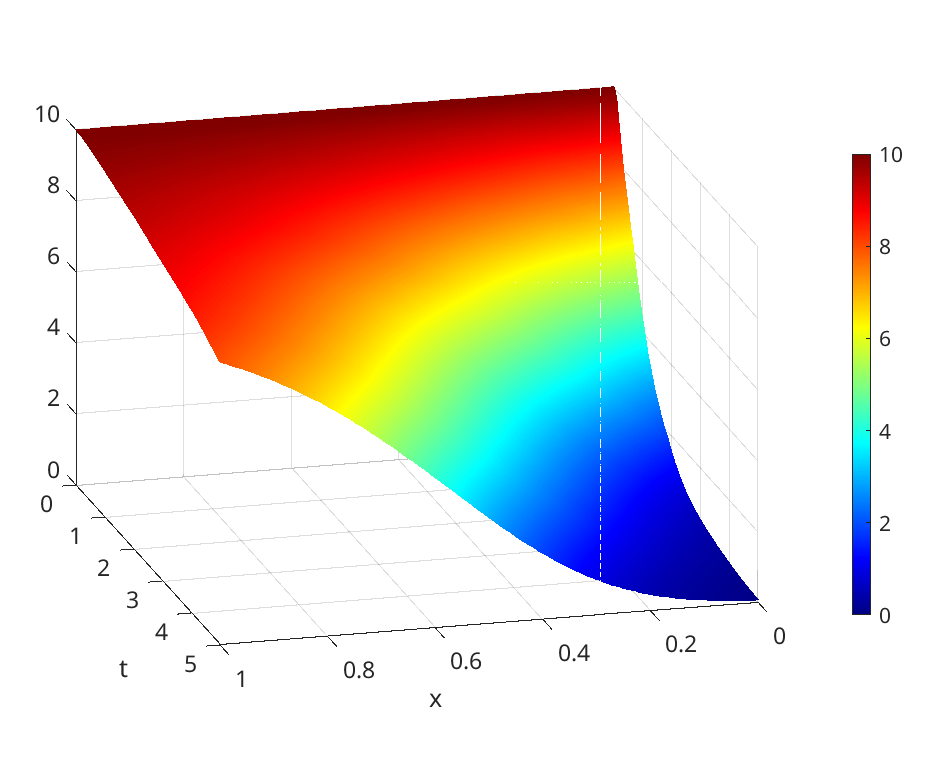}
    \end{subfigure}
    
    (a)
    
    \begin{subfigure}{0.4\textwidth}
        \centering
        \includegraphics[width=\textwidth]{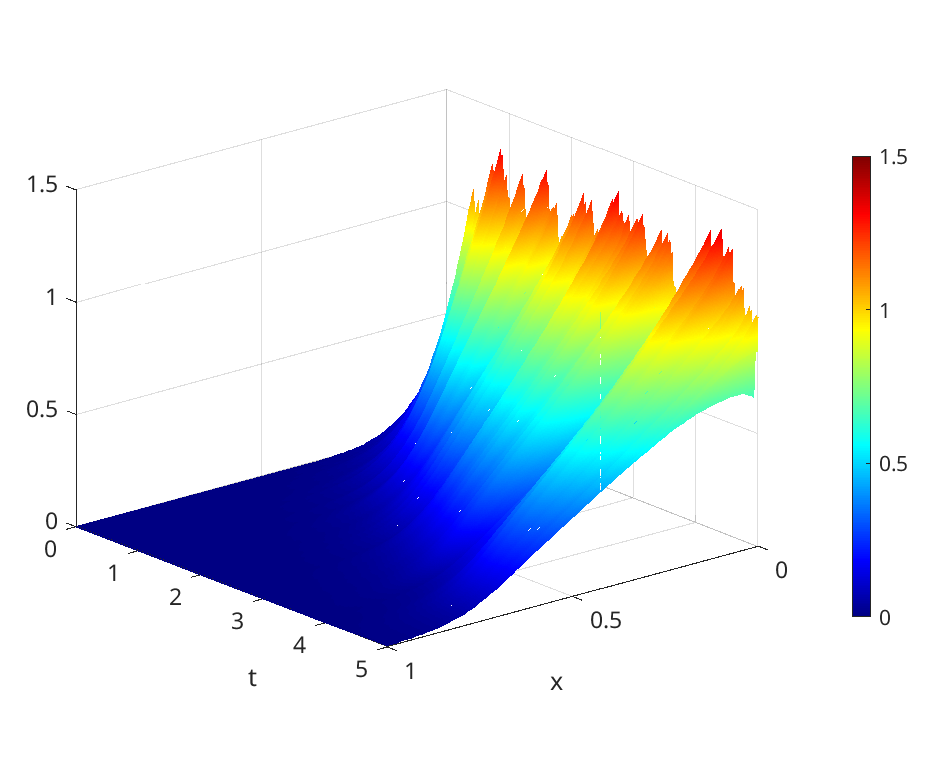}
    \end{subfigure}
    \begin{subfigure}{0.4\linewidth}
        \centering
        \includegraphics[width=\linewidth]{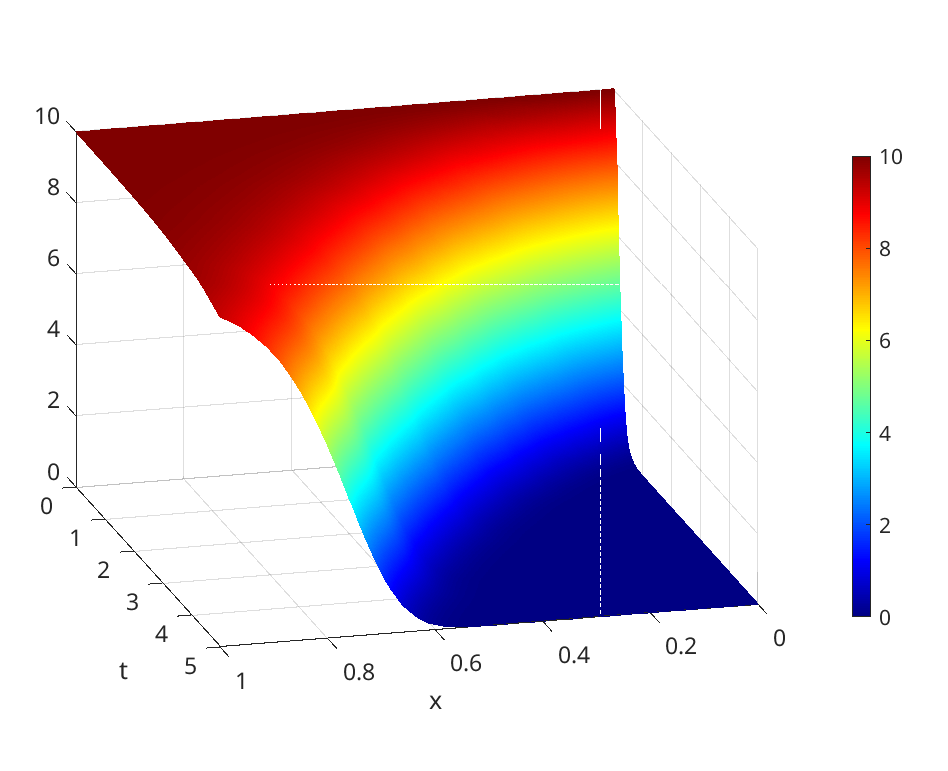}
    \end{subfigure}
    
    (b)
    
    \begin{subfigure}{0.4\textwidth}
        \centering
        \includegraphics[width=\textwidth]{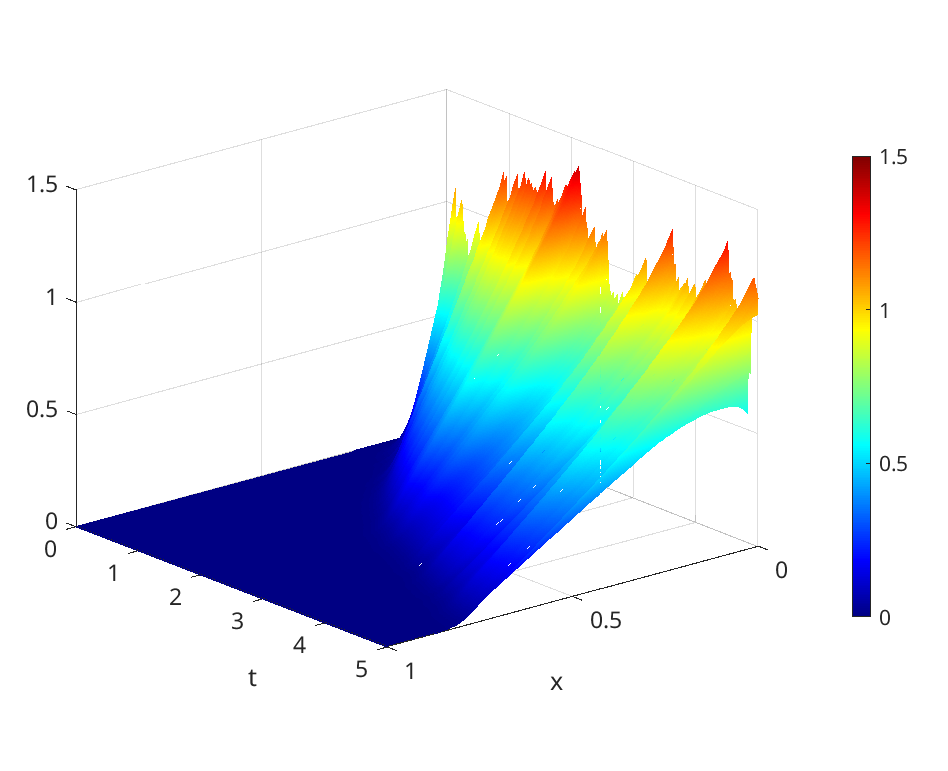}
    \end{subfigure}
        \begin{subfigure}{0.4\linewidth}
        \centering
        \includegraphics[width=\linewidth]{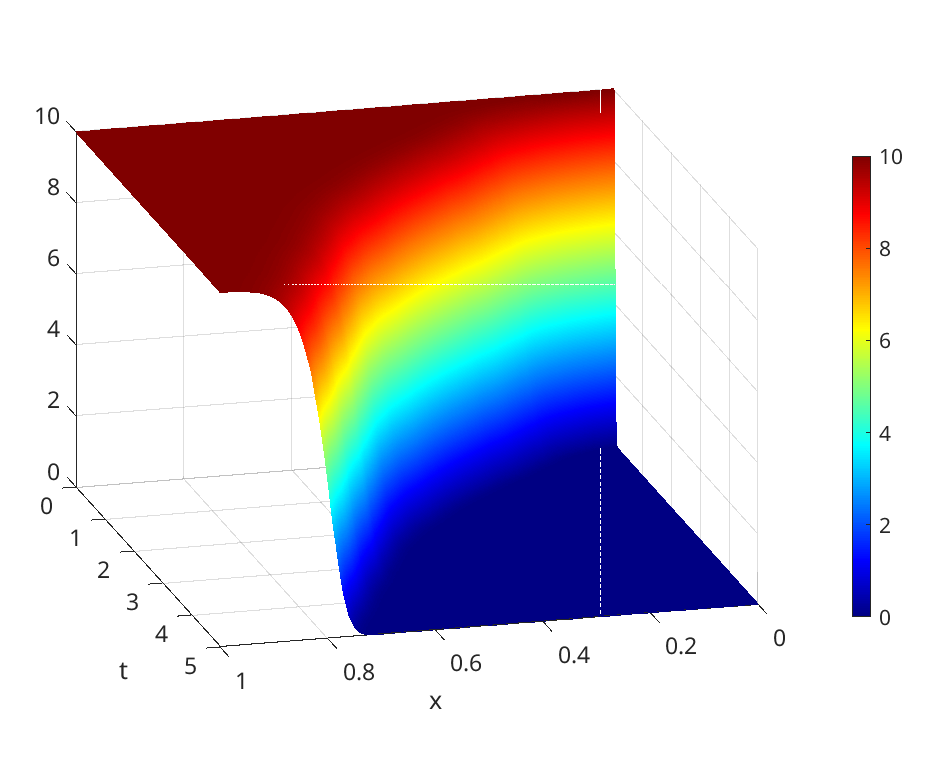}
    \end{subfigure}
    
    (c)
    
   \caption{The role of the activation energy $\lambda$. Evolution of a single realization of \ch{SO2} concentration (first column) and the corresponding marble density (second column) for increasing reaction rate: (a)  $\lambda_1=0.1$; (b) $\lambda_2=10$; (c)   $\lambda_3=100.$   Other parameters: $\varphi_1=0.2,\varphi_2=-0.01$, $\sigma_3=1$, $t\in [0,5]$. }
\label{fig:confronto_lambda_sigma_1_phi1_02}
\end{figure}

\section{Numerical Sampling:   estimation results}\label{se:numerical_sampling_estimation}  
 
Let us comment upon the fact that all the qualitative descriptions carried out in Section \ref{se:numerical_sampling} refer to a typical, but single realization of the random system. Now  we perform a statistical estimation of the dynamics for appreciating the variability of the process.  
We consider the estimation in $(t,x)\in [0,1.5]\times [0,1]$ by means of a sample  $\overline{N}=500$   trajectories of both processes $\rho$ and $c$. 
Concerning the variability of the distribution, Figure  \ref{fig:plot_quartile_rho_c} presents a graph of the 25\%, 50\% and 75\% percentile of the sample for each grid point. Hence,  the figure shows the intervals of the usual points of the distribution, that means the intervals within which the central 50\% of the distribution is located. We compare the case of  $\sigma_3=1$ as in Figure \ref{fig:confronto_lambda_sigma_1_phi1_02}, for both the slow reaction with $\lambda_1=1$ and the fast reaction case with $\lambda_3=100$.  
We observe that after a small time interval, all the percentiles at the boundary oscillate around some fixed points; in particular, the median oscillates around $\nu=1$. This suggests the existence of an invariant measure, coherently with the  invariant measure of the Pearson process at the boundary with unitary mean as in \eqref{eq:invariantmeasure}. Furthermore, Figure \ref{fig:plot_quartile_rho_c} shows how for the slow reaction case (a) close to $t=0$ the variability is  not detected and increases over time; furthermore fluctuations around the median are stronger closed to the boundary, even though they are present almost everywhere. A different scenario comes up in the case of fast reaction (b). As mentioned in Figure \ref{fig:confronto_lambda_sigma_1_phi1_02}, the transition zone gets smaller and it is located at the boundary; the interior is not touched by the sulphur dioxide penetration (left) and the variability rapidly goes to zero. The calcite deterioration (right) is very much significant  at the boundary: the formation of a front is visible and the variability is detectable, although it is very small.

 \begin{figure}[h!]
    \centering  
    \begin{subfigure}{0.43\linewidth}
        \centering
        \includegraphics[width=\linewidth]{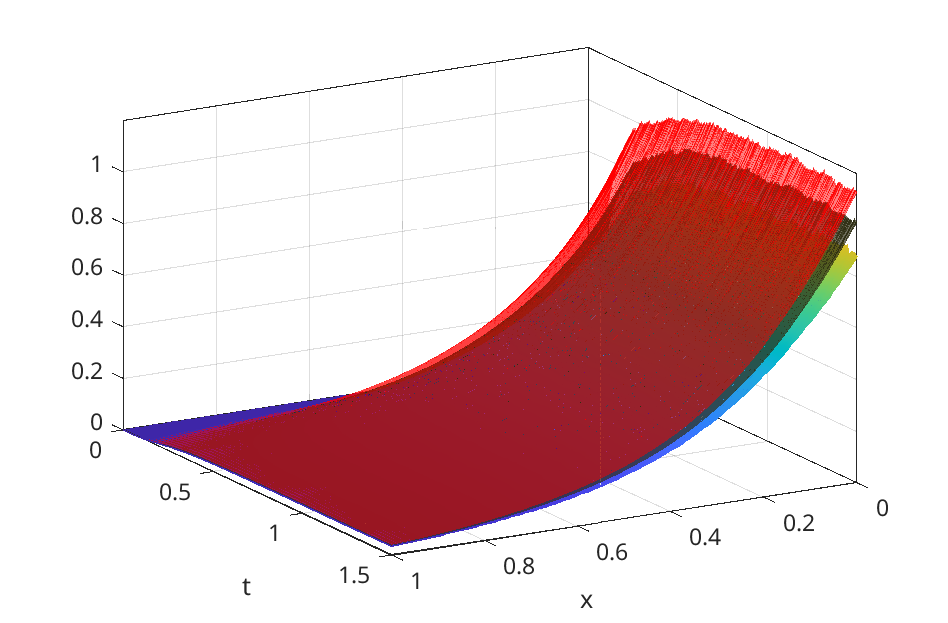}
    \end{subfigure}
    \begin{subfigure}{0.43\linewidth}
        \centering
        \includegraphics[width=\linewidth]{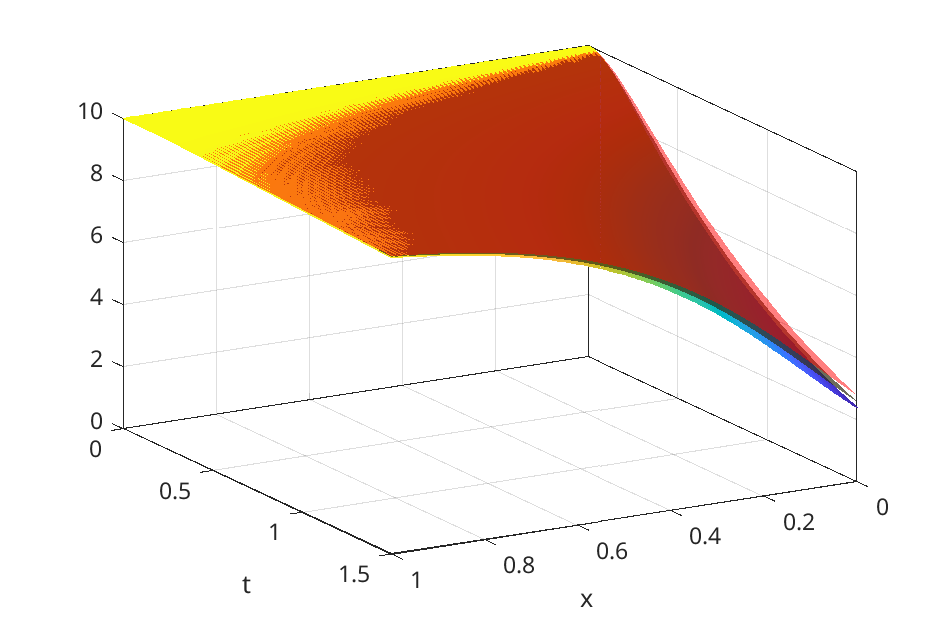}
    \end{subfigure}

    (a)

     \begin{subfigure}{0.43\linewidth}
        \centering
        \includegraphics[width=\linewidth]{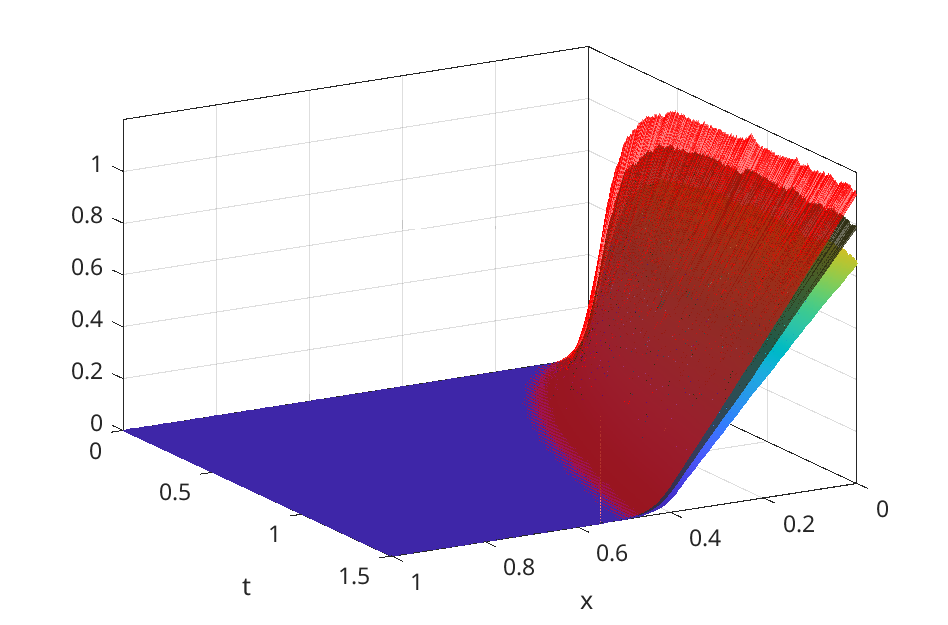}
    \end{subfigure}
    \begin{subfigure}{0.43\linewidth}
        \centering
        \includegraphics[width=\linewidth]{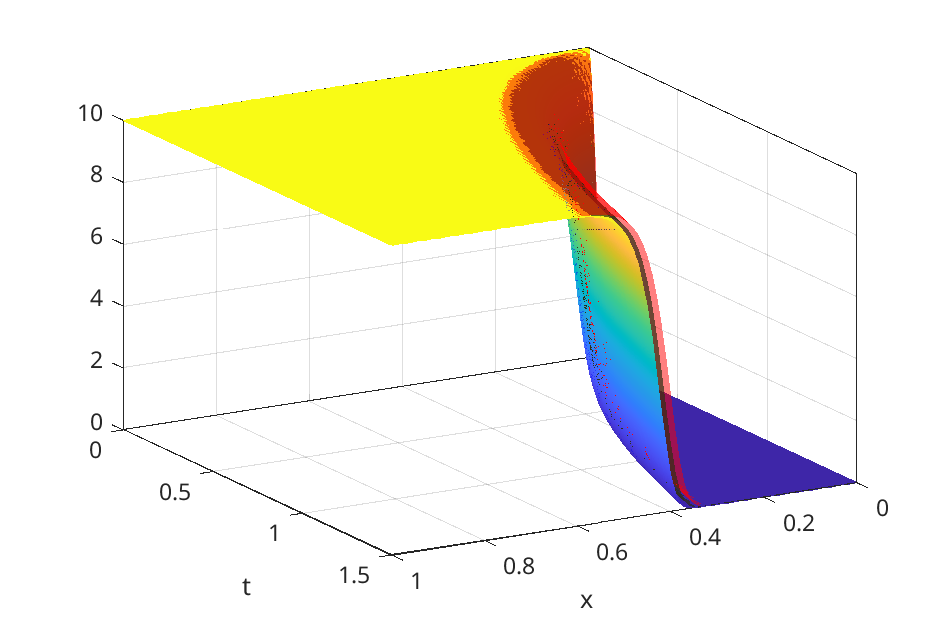} 
    \end{subfigure} 
      
    (b)

\caption{ Estimation of the distribution : 25\%   (green-blue), 50\% (black) and 75\% (red) percentile of an $\overline{N}=500$  sample of   the processes $\rho$ (left) and $c$ (right)  at each mesh point. Parameters: $\varphi_1=0.2$, $\sigma_3=1$. (a) slow reaction $\lambda_1=1$; (b) fast reaction $\lambda_3=100.$  }	\label{fig:plot_quartile_rho_c}
 \end{figure}
 \begin{figure}[h!]
    \centering  
    \begin{subfigure}{0.4\linewidth}
        \centering
        \includegraphics[width=\linewidth]{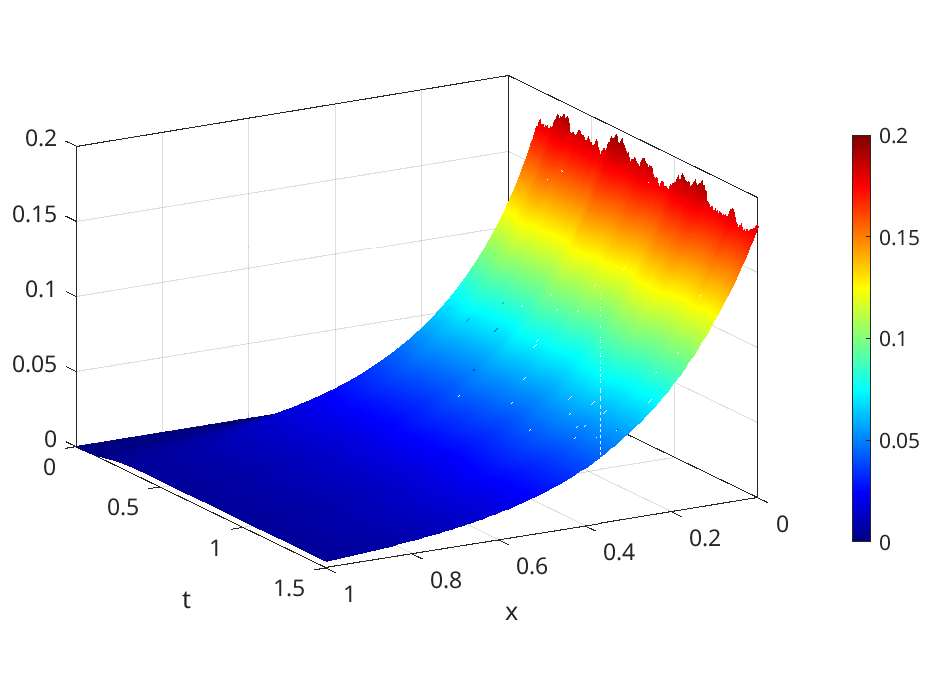}
    \end{subfigure}
    \begin{subfigure}{0.40\linewidth}
        \centering
        \includegraphics[width=\linewidth]{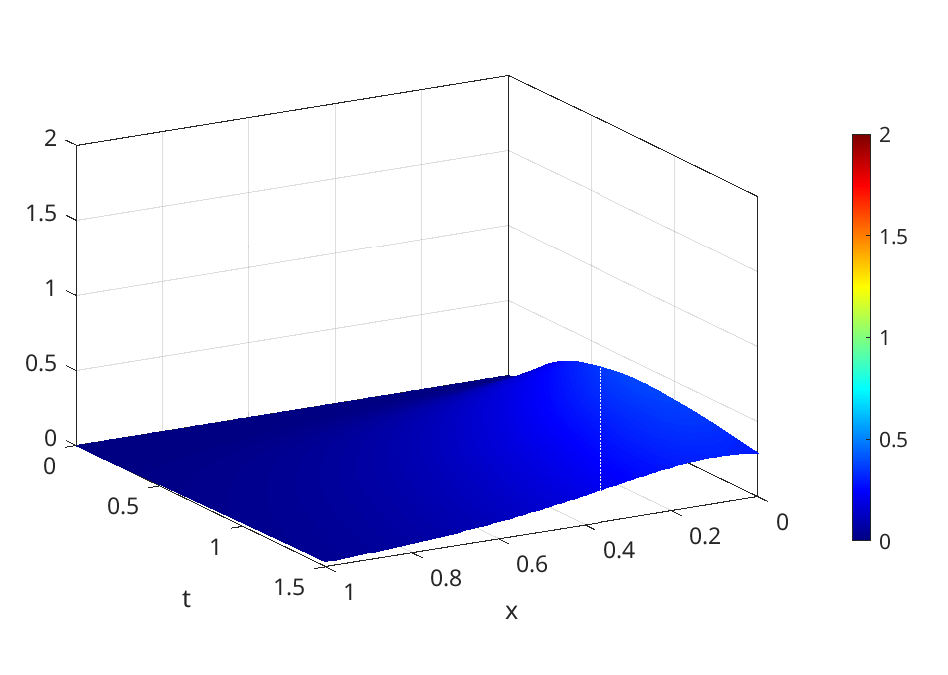}
    \end{subfigure}

    (a)

     \begin{subfigure}{0.40\linewidth}
        \centering
        \includegraphics[width=\linewidth]{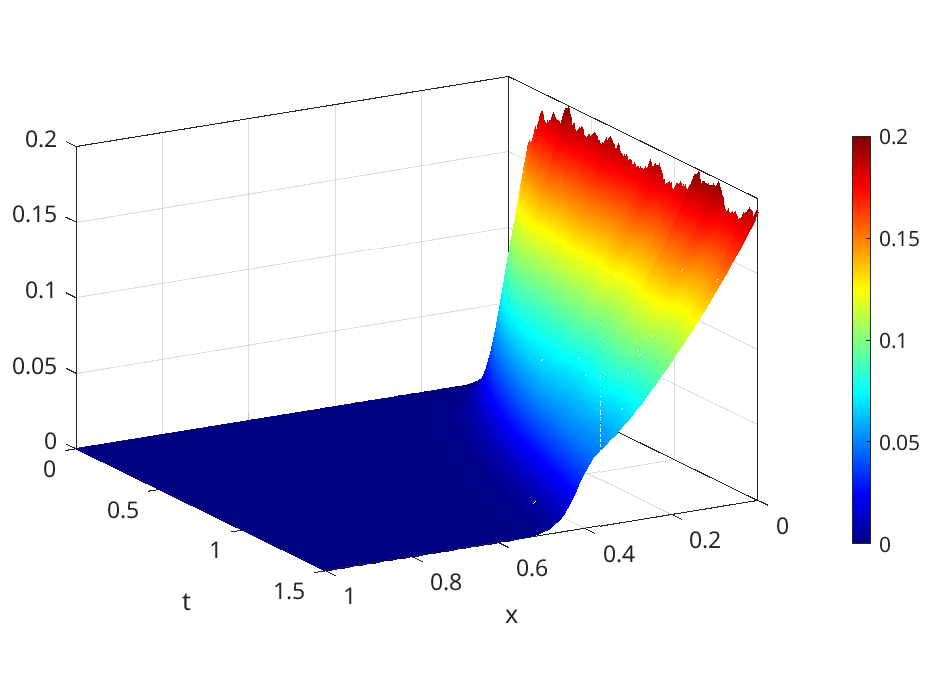}
    \end{subfigure}
    \begin{subfigure}{0.40\linewidth}
        \centering
        \includegraphics[width=\linewidth]{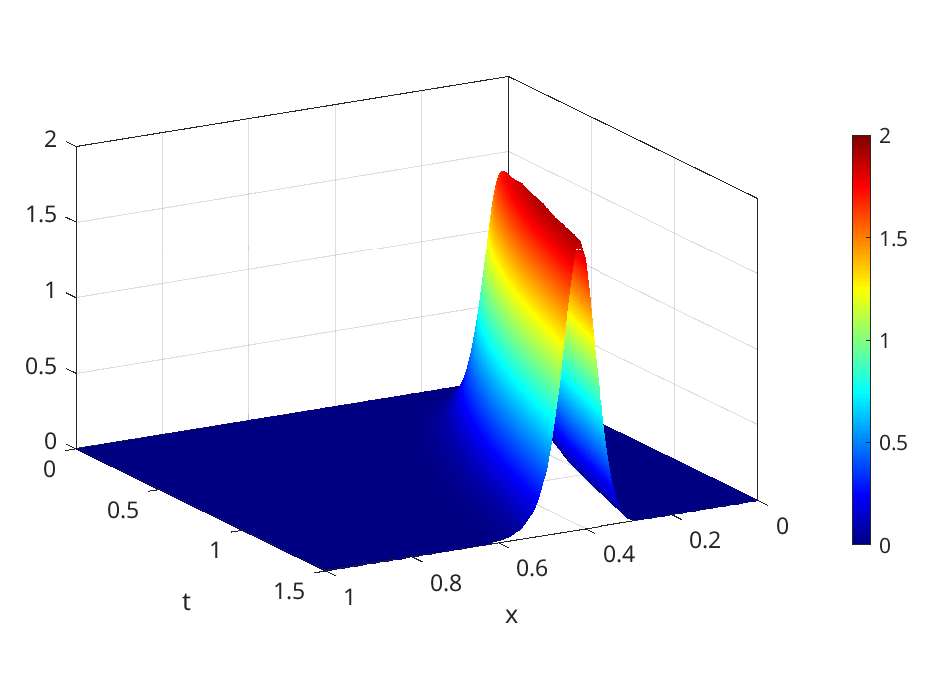} 
    \end{subfigure} 
      
    (b)

\caption{ Estimation of the distribution: standard deviation  $\overline{N}=500$  sample of   the processes $\rho$ (left) and $c$ (right)  at each mesh point. Parameters: $\varphi_1=0.2$, $\sigma_3=1$. (a) slow reaction $\lambda_1=1$; (b) fast reaction $\lambda_3=100.$  }	\label{fig:plot_standard_deviation_rho_c_different_lambda}
 \end{figure}
This is coherent with Figure \ref{fig:plot_standard_deviation_rho_c_different_lambda} which shows the standard deviations  $S_\rho, S_c$   of the empirical distribution at each point of the mesh    
\begin{equation*}%\label{eq:mean_var_rho_c}
\begin{split}
\left[S_\rho\right]^n_m = \sqrt{\frac{1}{\overline{N}-1}\sum_{j=1}^{\overline{N}} \left( \rho^n_m(\omega_j)-\overline{\rho}^n_m\right)^2}, &\qquad \overline{\rho}^n_m= \frac{1}{\overline{N}}\sum_{j=1}^{\overline{N}} \rho^n_m(\omega_j); \\\
 \left[S_c\right]^n_m= \sqrt{\frac{1}{\overline{N}-1}\sum_{j=1}^{\overline{N}} \left( c^n_m(\omega_j)-\overline{c}^n_m\right)^2}, &\qquad \overline{c}^n_m= \frac{1}{\overline{N}}\sum_{j=1}^{\overline{N}} s^n_m(\omega_j),  
 \end{split}
 \end{equation*}
for any $(m,n)\in \{0,\dots,M\}\times\{0,\ldots,N\}$ and the occurrences $\omega_j\in \Omega_p$, $j=1,...,\overline{N}.$
We can notice that the standard deviations of the synthetic samples for the sulphur dioxide concentration are of the same order close to the boundary where the influence of the noise is strong, both in the slow and fast regime. 
However, in the case of fast reaction, it vanishes soon. Concerning the calcium carbonate, the orders of magnitude are instead very different, precisely one order smaller in the case of slow reaction. Furthermore, when $\lambda_3=100$ the variance is always zero, except for an interval around the forming front. 
It seems that the spatial distribution of the variance is always the same but with a time-dependent translation. This is interesting and it is worth performing further experiments to identify a possible invariant distribution around the moving front.

 \begin{figure}[h!]
    \centering  
    \begin{subfigure}{0.37\linewidth}
        \centering
        \includegraphics[width=\linewidth]{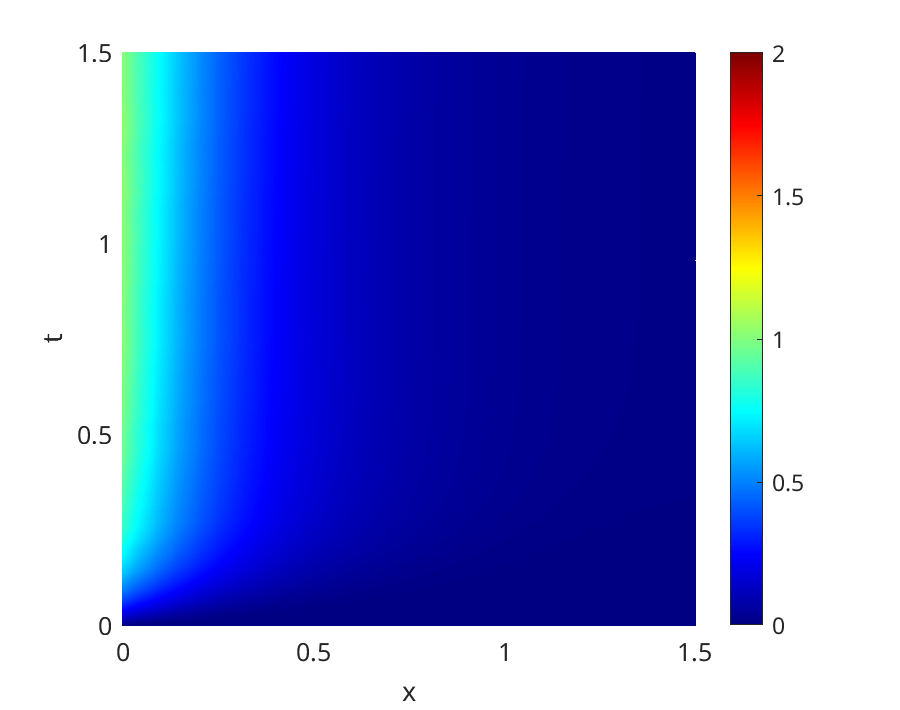}
    \end{subfigure}
    \begin{subfigure}{0.35\linewidth}
        \centering
        \includegraphics[width=\linewidth]{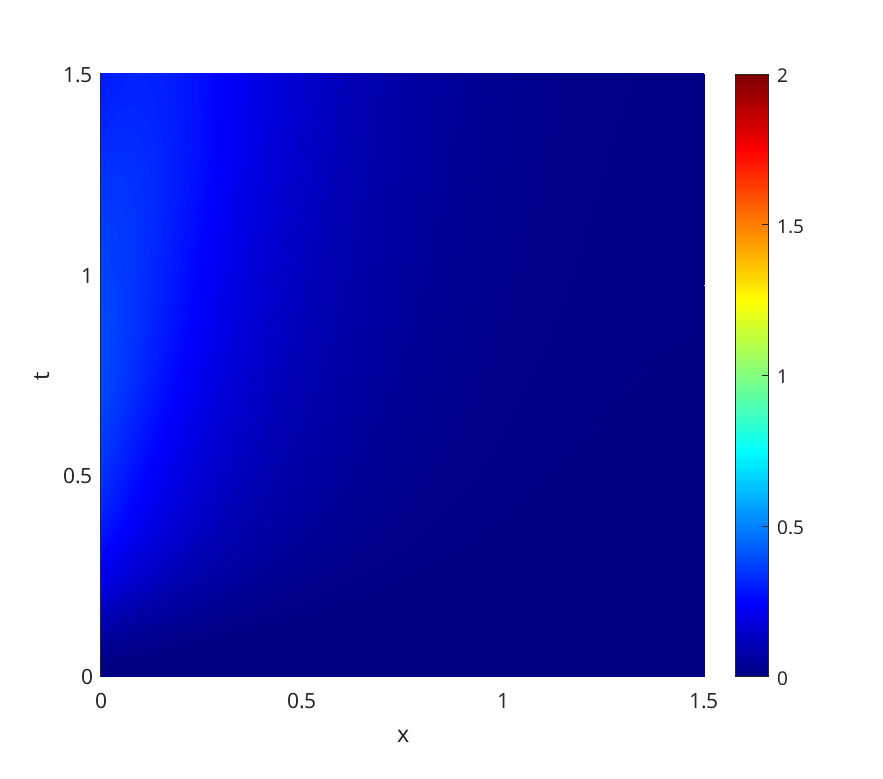}
    \end{subfigure}

    (a)

     \begin{subfigure}{0.35\linewidth}
        \centering
        \includegraphics[width=\linewidth]{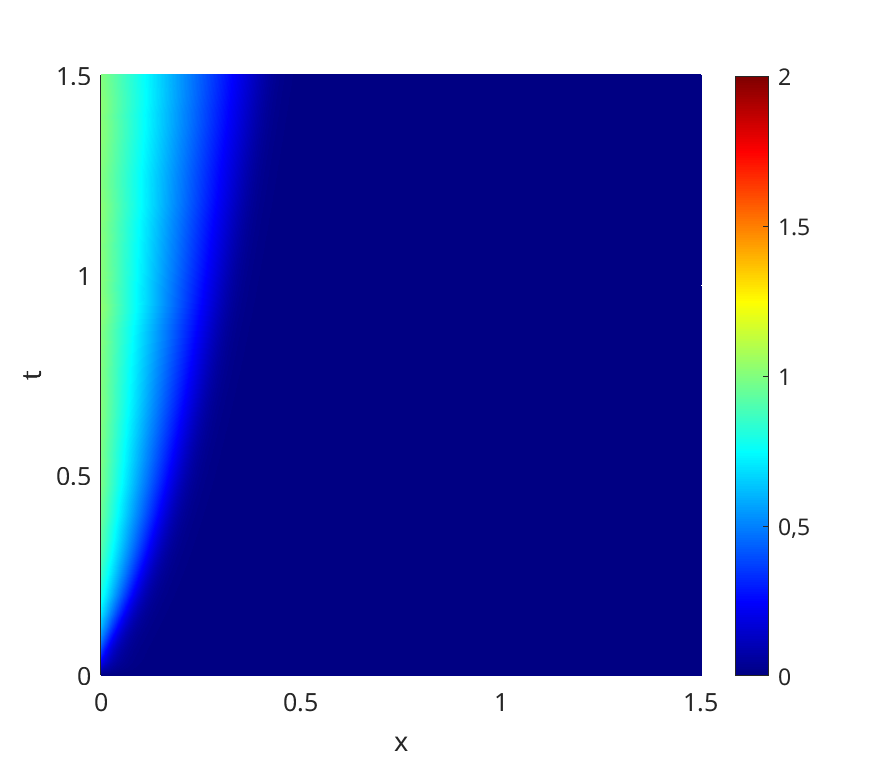}
    \end{subfigure}
    \begin{subfigure}{0.35\linewidth}
        \centering
        \includegraphics[width=\linewidth]{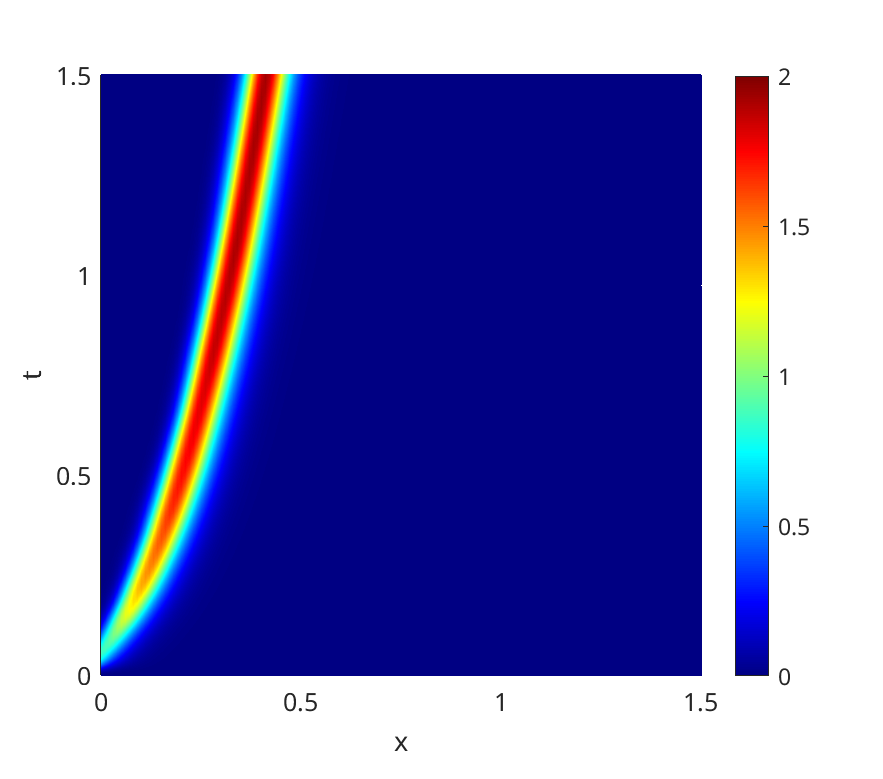} 
    \end{subfigure} 
      
    (b)

\caption{ Estimation of the impact of the noise at the boundary:  $RMSD(\rho_{\sigma},\rho_{\sigma_0})$ (left) and   $RMSD(c_{\sigma},c_{\sigma_0})$ (right)  at each mesh point. Parameters: $\varphi_1=0.2$, $\sigma_3=1$. (a) slow reaction $\lambda_1=1$; (b) fast reaction $\lambda_3=100.$  }	\label{fig:plot_colormap}
 \end{figure}
 
\medskip

The last measurement we perform aims to quantify the impact of the noise on the solution in terms of the computation of an $L^2(\Omega)$ distance between the deterministic solution $(\rho_{\sigma_0},c_{\sigma_0})$ and the noise solution $(\rho_{\sigma},c_{\sigma})$ for $\sigma_3=1$, both in the case of slow and fast reaction. We compute the \textit{root mean square difference} (RMSD),\cite{2016_burger} 
\begin{equation*}
\begin{split}
    RMSD(\rho_{\sigma},\rho_{\sigma_0})(t,x)&=\sqrt{\mathbb E\left[\left| \rho_{\sigma_3}(t,x)-\rho_{\sigma_0}(t,x)\right|^2\right]},\\
    RMSD(c_{\sigma},c_{\sigma_0})(t,x)&=\sqrt{E\left[\left| c_{\sigma_3}(t,x)-c_{\sigma_0}(t,x)\right|^2\right]}.
\end{split}
\end{equation*}

Figure \ref{fig:plot_colormap} shows the impact of the noise very clearly, displaying that the noise at the boundary propagates immediately throughout the entire domain: more slowly for the slow reaction rate. Moreover, in the latter configuration, the impact of the noise at the boundary for the calcium carbonate density is almost zero everywhere. On the other hand, in the case of fast reaction, there is a more significant impact of the noise for the calcite around the formation of the front, whereas it is almost zero for the \ch{SO2} concentration, coherently with the analysis of the variability previously discussed.

 \section{Conclusions}
In this work we have studied from the numerical point of view a nonlinear system on the half-line, recently proposed in   \cite{2023_Arceci_Giordano_Maurelli_Morale_Ugolini} and \cite{2023_MMU}, given by a parabolic reaction-diffusion equation and a reaction ODE, describing the phenomenon of marble sulphation.  The system is coupled with a stochastic dynamical boundary condition, given by a specific Pearson diffusion, which is original in the context of the degradation of cultural heritage. For completeness, we have discussed positiveness, boundedness, and stability for the Pearson stochastic differential equation,  and its numerical scheme, known as Lamperti Sloping Smooth Truncation, and based on the Euler-Maruyama method. The original part of the paper is the  numerical approximation of the overall system. According with the splitting strategy adopted in the study of the well-posedness of the system, provided in   \cite{2023_MMU}, we  have proposed to split the system into two subsystems: an autonomous heat equation which inherits the stochastic boundary condition, and a nonlinear system, coupled with the first, and therefore random, but with deterministic boundary conditions.  We present an FTCS approximation and discuss the stability conditions. Then the spatial accuracy order one is numerically estimated. An extensive qualitative investigation of the numerical solutions for the entire system is provided for different levels of noise at the boundary, different values of the porosity of the material, both in the slow and fast regime of the reaction. We discuss not only pathwise behaviour, but also distribution, first and second moments of the samples, showing how the variability at the boundary spreads out into the domain. One can see that as $\sigma$ increases, the noise impact on the entire domain also increases, gradually for the slower regime and  with high fluctuations for a faster reaction. In the latter case,  the transition zone gets smaller: the interior
remains unaffected by sulphur dioxide without penetration of the sulphur dioxide, and more  evident calcite deterioration at the boundary of the sample is 	highlighted with a subsequent formation of a moving front.

\section*{Acknowledgments}
The research is carried out within the research project PON 2021(DM 1061, DM 1062) ``Deterministic and stochastic mathematical modelling and data analysis within the study for the indoor and outdoor impact of the climate and environmental changes for the degradation of the Cultural Heritage" of the Università degli Studi di Milano. 
The authors are members of GNAMPA (Gruppo Nazionale per l’Analisi Matematica, la Probabilità e le loro Applicazioni) of the Italian Istituto Nazionale di Alta Matematica (INdAM).

\end{document}